\newcommand{\CC}{{\mathbb{C}}}
\newcommand{\FF}{{\mathbb{F}}}
\newcommand{\sS}{{\mathbb{S}}}
\newcommand{\bG}{{\mathbf{G}}}
\newcommand{\bZ}{{\mathbf Z}}
\newcommand{\cE}{{\mathcal{E}}}
\newcommand{\cP}{{\mathcal P}}
\newcommand{\Cl}{\mathfrak{Cl}}
\newcommand{\Bl}{\operatorname{Bl}}
\newcommand{\bl}{\operatorname{bl}}
\newcommand{\Syl}{\operatorname{Syl}}
\newcommand{\IBr}{\operatorname{IBr}}
\newcommand{\Irr}{\operatorname{Irr}}
\newcommand{\Cent}{\ensuremath{{\mathrm{C}}}}
\newcommand{\GL}{{\operatorname{GL}}}
\newcommand{\SL}{{\operatorname{SL}}}
\newcommand{\Sp}{{\operatorname{Sp}}}
\newcommand{\GO}{{\operatorname{GO}}}
\newcommand{\GU}{{\operatorname{GU}}}
\newcommand{\SU}{{\operatorname{SU}}}
\newcommand{\Spin}{{\operatorname{Spin}}}
\newcommand{\dz}{{\operatorname{dz}}}
\newcommand{\rdz}{{\operatorname{rdz}}}
\newcommand{\tw}[1]{{}^#1\!}
\newcommand{\w}{\widetilde}
\newcommand{\wh}{\widehat}
\newcommand{\whS}{{\widehat S}}
\newcommand{\whX}{{\widehat X}}
\let\sbs=\subseteq
\def\nor{{\triangleleft\,}}
\def\zent#1{{\bf Z}(#1)}
\def\cent#1#2{{\bf C}_{#1}(#2)}
\def\norm#1#2{{\bf N}_{#1}(#2)}
\renewcommand{\o}{\overline}
\def\Lset#1{\Lset@h#1@}
\def\Lset@h#1@{\left\{#1\right\}}
\def\spann<#1>{\left\langle#1\right\rangle}
\newtheorem{thm}[subsection]{Theorem}
\newtheorem{prop}[subsection]{Proposition}
\newtheorem{cor}[subsection]{Corollary}
\newtheorem{lem}[subsection]{Lemma}
\numberwithin{equation}{section}
\begin{document}

\title[Invariant blocks under coprime actions]
  {Invariant blocks under coprime actions}

\date{\today}

\author{Gunter Malle}
\address{FB Mathematik, TU Kaiserslautern, Postfach 3049,
 67653 Kaisers\-lautern, Germany.}
\email{malle@mathematik.uni-kl.de}

\author{Gabriel Navarro}
\address{Departament d'\`Algebra, Universitat de Val\`encia,
 Dr. Moliner 50, 46100 Burjassot, Spain.}
\email{gabriel.navarro@uv.es}

\author{Britta Sp\"ath}
\address{FB Mathematik, TU Kaiserslautern, Postfach 3049,
 67653 Kaisers\-lautern, Germany.}
\email{spaeth@mathematik.uni-kl.de}

\thanks{The research of the first and third author has been supported
  by the ERC Advanced Grant 291512}

\keywords{Coprime action, blocks, characters}

\subjclass[2010]{20C15, 20C20, 20C33}

\begin{abstract}
If a finite group $A$ acts coprimely as automorphisms on a finite group $G$,
then the $A$-invariant Brauer $p$-blocks of $G$ are exactly those that
contain  $A$-invariant  irreducible characters.
\end{abstract}

\maketitle

\section{Introduction}

One of the most basic situations in group theory is when a group $A$ acts by
automorphisms on another group $G$. If we further assume that $A$ and $G$ are
finite   of coprime orders, it is well-known that most of the representation
theory of $G$ admits a version in which only the $A$-invariant structure is
taken into account. For instance, it is true (and not trivial) that the
number of irreducible complex characters of $G$ which are fixed by $A$ equals
the number of conjugacy classes of $G$ fixed by~$A$.

Although it is fair to say that the ordinary $A$-representation theory of $G$
is mostly well developed, we cannot say the same about $A$-invariant modular
representation theory (that is, of prime characteristic $p$). For instance,
it is suspected that the number of $A$-invariant irreducible $p$-Brauer
characters of $G$ is the number of $A$-invariant $p$-regular classes of $G$,
but this conjecture continues to be open. This, together with some other
problems, was proposed more than 20 years ago in \cite{Nav94}.

The extensive research during these years on Brauer $p$-blocks allows us now
to give a solution to Problem 6 of \cite{Nav94}.

\begin{thm}   \label{main}
 Suppose that the finite group $A$ acts by automorphisms on the finite group
 $G$ with $(|A|,|G|)=1$. Let $p$ be a prime, and let $B$ be a Brauer $p$-block
 of $G$. Then the following statements are equivalent:
 \begin{itemize}
  \item[\rm(i)] $B$ is $A$-invariant,
  \item[\rm(ii)] $B$ contains some $A$-invariant character $\chi\in\Irr(G)$,
  \item[\rm(iii)] $B$ contains some $A$-invariant Brauer character
    $\phi\in\IBr(G)$.
 \end{itemize}
\end{thm}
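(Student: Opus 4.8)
The plan is to dispose of the two easy implications directly, and then to reduce the remaining ones, by induction on $|G|$, to a statement about quasi-simple groups that can be checked with the classification of finite simple groups. First, (ii) and (iii) each imply (i) by the same one-line argument: if an $A$-invariant $\chi\in\Irr(G)$ lies in $B$, then $\chi=\chi^{a}\in\Irr(B^{a})$ for every $a\in A$, and since a character determines its block this forces $B^{a}=B$; the case of an $A$-invariant Brauer character is identical. So the whole content of the theorem is that an $A$-invariant block $B$ must contain an $A$-invariant ordinary character \emph{and} an $A$-invariant Brauer character, and I would prove these two statements together, by induction on $|G|$, after fixing an $A$-stable splitting $p$-modular system so that $A$ acts compatibly on $\Irr(G)$, $\IBr(G)$ and on the set of blocks, with the decomposition map $A$-equivariant.

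For the inductive step the tool is Clifford theory of blocks. If $N$ is an $A$-invariant normal subgroup of $G$ with $1<N<G$, the set of blocks of $N$ covered by $B$ is a single $G$-orbit, hence $A$-stable, so by coprimeness it contains an $A$-invariant block $b$; its stabiliser $T\le G$ is then $A$-invariant, and the Fong--Reynolds correspondence produces an $A$-invariant block $\w B$ of $T$ with $\w B^{G}=B$ together with $A$-equivariant bijections $\Irr(\w B)\to\Irr(B)$ and $\IBr(\w B)\to\IBr(B)$; so if $T<G$ we conclude by induction. We may thus assume that every such covered block $b$ is already $G$-invariant, and a Fong reduction over $O_{p'}(G)$ (the coprime $A$-action lifts to the central extension that occurs) then reduces us to the case $O_{p'}(G)=1$. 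If moreover $E(G)=1$, then $\cent{G}{O_{p}(G)}\le O_{p}(G)$, so $G$ has a unique block, the principal one, which contains the trivial characters $1_{G}\in\Irr(G)$ and $1_{G}\in\IBr(G)$. There remains the case $O_{p'}(G)=1$, $E(G)\ne1$; treating the layer by one more Fong--Reynolds step and the usual wreath-product bookkeeping for the components of $E(G)$ that $G$ and $A$ permute, the problem reduces to the following assertion about a quasi-simple group $S$, in the customary strengthened form that also records a central character: \emph{if $A$ is a group of automorphisms of $S$ with $(|A|,|S|)=1$ and $B$ is an $A$-invariant $p$-block of $S$, then $B$ contains an $A$-invariant ordinary character and an $A$-invariant Brauer character.}

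To prove this I would use the classification. Since $|A|$ is coprime to $|S|$, the action of $A$ involves no nontrivial diagonal or graph automorphisms; if $S/\zent{S}$ is alternating or sporadic this forces $A=1$ and there is nothing to prove, so $S/\zent{S}$ is of Lie type and $A$ may be taken to consist of field automorphisms, with $S$ realised essentially as $\bG^{F}$ for a simple algebraic group $\bG$ and a Frobenius endomorphism $F$ (the exceptional covering groups need a separate elementary treatment). A block of defect zero has a single ordinary and a single Brauer character, hence is settled at once when it is $A$-invariant, and in the defining characteristic the remaining blocks, which have maximal defect, are handled by the classical description of blocks of finite reductive groups. In the non-defining case I would use Lusztig's parametrisation together with the block theory of Brou\'e--Michel, Cabanes--Enguehard and Bonnaf\'e--Rouquier (and its extension to quasi-isolated blocks): the block $B$ is labelled by a semisimple $p'$-element $s$ of the dual group $\bG^{*F}$, its $A$-invariance means the conjugacy class of $s$ is stable under $A$ acting on the dual through field automorphisms, and $B$ is Morita equivalent, compatibly with field automorphisms, to a unipotent $p$-block $\o B$ of $\cent{\bG^{*}}{s}^{F}$ (at least when $\zent{\bG}$ is connected; the general case follows by passing to a regular embedding $\bG\hookrightarrow\w\bG$ and a Clifford-theoretic descent for the abelian quotient $\w\bG^{F}/\bG^{F}$). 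Now field automorphisms fix every unipotent character, so $\o B$ already contains an $A$-invariant ordinary character; and since the unipotent characters lying in $\o B$ form a basic set, the corresponding invertible square submatrix of the decomposition matrix forces any field automorphism to act trivially on $\IBr(\o B)$, producing an $A$-invariant Brauer character as well. Transporting these through the Morita equivalence, $B$ contains $A$-invariant ordinary and Brauer characters, as required.

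I expect this quasi-simple base case to be the main obstacle. One has to make the equivariance under field automorphisms of the Jordan decomposition---of ordinary characters, of Brauer characters, and of blocks---completely explicit, to handle the disconnected centralisers $\cent{\bG^{*}}{s}$ and disconnected centres, and above all to keep control of the small primes, where these structural results are least transparent; and all of this rests on the classification and on the by-now-extensive theory of blocks of finite groups of Lie type. By contrast the inductive reduction of the second paragraph is routine Clifford theory of blocks, its only mildly delicate ingredients being the Fong reduction removing $O_{p'}(G)$ and the observation that a group with a self-centralising normal $p$-subgroup has exactly one block.
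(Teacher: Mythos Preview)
Your handling of the easy implications and the first Clifford step (Glauberman plus Fong--Reynolds to reach $G_b=G$) is fine, and it matches the paper. The gap is in the next move: the sentence ``treating the layer by one more Fong--Reynolds step and the usual wreath-product bookkeeping'' does not do what you need it to do. Once the $A$-invariant block $b$ of a proper $A$-stable normal subgroup $N$ (say $N=E(G)$ or $N=F^*(G)$) is already $G$-invariant, Fong--Reynolds gives nothing further, and you are left with the genuine problem: you can produce, by induction or by the quasi-simple base case, an $A$-invariant $\theta\in\Irr(b)$, and you can produce, by the classical coprime result (Isaacs (13.28)), an $A$-invariant $\chi\in\Irr(G\mid\theta)$; but nothing you have written forces $\chi$ to lie in the \emph{particular} block $B$ among the several blocks of $G$ covering $b$. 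The Fong reduction over $O_{p'}(G)$ that you invoke is legitimate precisely because a $p'$-normal subgroup has defect-zero blocks and there is then a Morita equivalence controlling the covering blocks; no such mechanism is available for $N=E(G)$, whose blocks need not have trivial or central defect. This is not a cosmetic omission: it is exactly the obstruction the paper spends Sections~3--5 overcoming.

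The paper's solution is to interpose Dade's ramification group $G[b]$. One first reduces to $G[b]=G$ (since by Dade--Murai a block of $G$ covering a given block of $G[b]$ is unique, so if $G[b]<G$ induction applies). Under the hypothesis $G[b]=G$ the authors prove a block-wise Gallagher theorem (their Theorem~3.5/4.3): with $\theta$ extending to $\widetilde\theta\in\Irr(G)$, the map $\bl(\overline\eta)\mapsto\bl(\widetilde\theta\,\eta)$ is a \emph{bijection} $\Bl(G/N)\to\Bl(G\mid b)$. This is what makes the character-triple isomorphism $(G,N,\theta)\to(G^*,N^*,\theta^*)$ compatible with blocks (Proposition~5.2), and only then can one replace $N$ by a central subgroup and land in the central-product-of-quasi-simples situation. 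Your outline contains no substitute for this step. Separately, your treatment of the non-defining-characteristic base case via Bonnaf\'e--Rouquier Morita equivalences is heavier than necessary and brings in its own equivariance issues for disconnected centralizers and bad primes; the paper instead shows directly (Lemma~2.2, Proposition~2.4) that a coprime field automorphism stabilizing a Lusztig series fixes it pointwise, hence fixes every ordinary and every Brauer character in an invariant block.
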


Of course, (ii) and (iii) easily imply that $B$ is $A$-invariant, so all the
work is concentrated in proving that (i) implies (ii) and (iii).

The paper is structured in the following way.
In Section \ref{sec_quasi_simple} a version of the main theorem is proven in
the case where $G$ is quasi-simple. Afterwards in
Section~\ref{sec_Gal_type_thm} we present a Gallagher type theorem for blocks.
In connection with Dade's ramification group from \cite{Dade_blockextensions},
revisited in Section~\ref{sec_Dade_group}, we show the existence of character
triple isomorphisms having crucial properties with respect to coprime
action and blocks (see Section~\ref{sec_char_trip}). We conclude in the final
section with the reductions proving how the results on quasi-simple groups
imply our main statement.

\section{Quasi-simple groups and their central products}\label{sec_quasi_simple}

The aim of this section is the proof of a strengthened version of
Theorem~\ref{main} in cases where the group $G/\bZ(G)$ is the direct product
of $r$ isomorphic non-abelian simple groups. First we deal with the case
where $G$ is the universal covering group of a simple group and hence $G$
is a quasi-simple group.

In the following we use the standard notation around characters and blocks
as introduced in \cite{Isa} and \cite{Navarro}. Let $p$ be a prime.
If $A$ acts on $G$, we denote by $\Bl_A(G)$ the set of $A$-invariant
$p$-blocks of $G$. If $Z\nor G$, $B\in\Bl(G)$ and $\nu\in\Irr(Z)$ we
denote by $\Irr(B| \nu)$ the set $\Irr(B)\cap \Irr(G| \nu)$. Also, sometimes
we will work in $GA$, the semidirect product of $G$ with $A$.

\begin{thm}  \label{thmA_simple}
 Let $G$ be the universal covering group of a non-abelian simple group $S$,
 $A$ a group acting on $G$ with $(|G|,|A|)=1$, $B$ an $A$-invariant $p$-block
 of $G$, $Z$ the Sylow $p$-subgroup of $\bZ(G)$ and $\nu\in\Irr(Z)$
 $A$-invariant.
 Then $B$ contains some $A$-invariant character $\chi\in\Irr(G| \nu)$.
\end{thm}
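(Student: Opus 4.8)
The plan is to prove this through the classification of finite simple groups: one first reduces the coprime action $A$ to a group of field automorphisms, and then applies the known description of the $p$-blocks of quasi-simple groups of Lie type together with the compatibility of Deligne--Lusztig theory with field automorphisms. Only the existence of \emph{some} $A$-invariant $\chi\in\Irr(B)\cap\Irr(G\mid\nu)$ is at stake. For $\nu=1_Z$ this is already clear, as $B$ dominates a block $\o B$ of $G/Z$ and $\Irr(B\mid 1_Z)=\Irr(\o B)\ne\emptyset$; and when $A=1$ the assertion reduces to $\Irr(B\mid\nu)\ne\emptyset$, which the construction below also delivers.

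So assume $A\ne 1$. Replacing $A$ by its image in $\Aut(G)$ (coprimality persists) and using $\Aut(G)\cong\Aut(S)$, since $G$ is the universal cover of~$S$, we may take $A\le\Aut(S)$. As $(|A|,|G|)=1$ and the order $|\wh S|$ of the inner--diagonal automorphism group is divisible only by primes dividing $|S|$, we get $A\cap\wh S=1$, so $A$ embeds in $\Aut(S)/\wh S$. Because every non-abelian simple group has even order and $\operatorname{Out}(S)$ is a $\{2,3\}$-group when $S$ is sporadic or alternating, $S$ must be of Lie type; and since the graph part of $\operatorname{Out}(S)$ has order $2$ or $3$, both dividing $|S|$, the image of~$A$ lies in the cyclic group of field automorphisms. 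Schur--Zassenhaus (uniqueness of complements to $\wh S$, using $(|A|,|\wh S|)=1$) then conjugates~$A$ onto a cyclic group $\langle\phi\rangle$ of genuine field automorphisms of $G=\mathbf G^F$, where $\mathbf G$ is simple of simply connected type and $|\phi|$ is coprime to $|S|$, in particular odd and prime to the defining characteristic~$\ell$. (The finitely many quasi-simple groups with exceptional Schur multiplier have $\operatorname{Out}(S)$ a $\{2,3\}$-group, so they occur only for $A=1$, where $\Irr(B\mid\nu)\ne\emptyset$ is checked by hand.)

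The case $\ell=p$ is then immediate: the centre of a simply connected semisimple group over $\overline{\FF}_p$ is a $p'$-group, so $Z=\bZ(G)_p=1$, forcing $\nu=1_Z$; and the $p$-blocks of $G=\mathbf G^F$ are the principal block, which contains~$1_G$, together with blocks of defect zero, which are singletons. In either case $B$ possesses an $A$-invariant character.

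It remains to treat $\ell\ne p$. Here I would combine Brou\'e--Michel with the classification of the $p$-blocks of~$G$ (Cabanes--Enguehard for good~$\ell$, and Kessar--Malle, Enguehard and others for bad~$\ell$) and Bonnaf\'e--Rouquier-type reductions: one has $B\subseteq\cE_p(G,s)$ for a semisimple $p'$-element $s\in G^{*F}$, and $\Irr(B)$ is a union of $e$-Harish-Chandra series, each governed by an $e$-cuspidal pair $(\mathbf L,\lambda)$ and its relative Weyl group $W_G(\mathbf L,\lambda)$, with $\Irr(B\mid\nu)$ consisting of those series whose central character on~$Z$ is~$\nu$. The soft engine is the lemma that a coprime group acting on a transitive $G^F$-set has a fixed point (Schur--Zassenhaus once more): since $\phi$ stabilises~$B$ it stabilises the corresponding $G^F$-orbits of data, so one may pick~$s$, then a $p$-element~$t$ with $\wh t|_Z=\nu$, then an $e$-cuspidal pair, all $\phi$-stable; then the character of the chosen series attached to the trivial character of the ($\phi$-stable) relative Weyl group --- a semisimple-type, respectively $e$-cuspidal-type, character --- lies in $\Irr(B)\cap\Irr(G\mid\nu)$ and is $\phi$-invariant, thanks to the equivariance under field automorphisms of Deligne--Lusztig induction, Jordan decomposition and $e$-Harish-Chandra theory. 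I expect the genuine obstacle to be exactly this last point: carrying the central-character constraint~$\nu$ through the reductions while keeping every choice $\phi$-equivariant --- in particular producing a $\phi$-stable $e$-cuspidal pair, or $p$-element~$t$, realising~$\nu$ inside~$B$ when the relevant set of choices is only $\phi$-stable rather than transitive --- together with the bad primes $p\in\{2,3\}$ and the semisimple elements with disconnected centraliser, where the $e$-cuspidal picture must give way to quasi-isolated block theory and the disconnected-centre version of Jordan decomposition; a finite check disposes of the remaining small cases, and the non-emptiness of $\Irr(B\mid\nu)$ emerges from the same analysis.
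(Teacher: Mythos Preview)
Your reduction to groups of Lie type acted on by coprime field automorphisms matches the paper's. But the defining-characteristic case $\ell=p$ contains a genuine error: it is \emph{not} true that the only $p$-block of positive defect is the principal one. By Humphreys' theorem the $p$-blocks of positive defect of $G=\mathbf G^F$ are in bijection with $\Irr(\bZ(G))$; you are right that $\bZ(G)$ is a $p'$-group here, so $Z=1$ and $\nu=1_Z$, but $\bZ(G)$ itself is typically nontrivial (e.g.\ $\SL_2(q)$ with $q$ odd and $p=\ell$ has two blocks of full defect). Your opening claim that the case $\nu=1_Z$ is ``already clear'' does not rescue this: the domination argument only shows $\Irr(B\mid 1_Z)\ne\emptyset$, not that it contains an $A$-invariant member. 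The paper fills this gap by exhibiting, for each type and each $\gamma$-stable nontrivial central character, an explicit semisimple element $s\in G^*$ whose class is field-automorphism-stable and whose Lusztig series $\cE(G,s)$ lands in the right block; the key lemma is that a coprime field automorphism stabilising $\cE(G,s)$ fixes it \emph{pointwise}, because the number of $G$-constituents of any $\tilde\chi\in\cE(\tilde G,\tilde s)$ is at most $|\bZ(G)|$ and every prime up to $|\bZ(G)|$ already divides $|G|$.

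For $\ell\ne p$ your plan via $e$-Harish-Chandra series, quasi-isolated block theory and equivariance of Jordan decomposition is plausible but, as you acknowledge, leaves serious loose ends (bad primes, disconnected centralisers, carrying the constraint $\nu$ through). The paper sidesteps all of this with the same pointwise-fixing lemma: it proves the much stronger statement that \emph{every} $\chi\in\Irr(B)$ is $\gamma$-fixed. One lifts the semisimple label $s$ to a $\gamma$-stable $\tilde s\in\tilde G^*$ (connected centraliser), observes that $C_{\tilde G^*}(\tilde s)^\gamma$ contains a Sylow $p$-subgroup of $C_{\tilde G^*}(\tilde s)$ (since $p\mid|\bZ(G)|$ divides the Weyl-group order), so every $p$-element has a $\gamma$-stable conjugate $\tilde t$; then each $\cE(G,st)\subseteq\cE_p(G,s)$ is $\gamma$-stable and hence pointwise fixed. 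No $e$-cuspidal pairs, no case analysis on bad primes.
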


This theorem is true whenever $B$ is the principal block and $Z=1$, since
then $\chi$ can be chosen to be the trivial character.
On the other hand when $B$ is a block of central defect, $\Irr(B|\nu)$
contains exactly one character and hence this one is $A$-invariant.

Note that neither alternating nor sporadic simple groups possess coprime
automorphisms, and that for groups of Lie type the only coprime order
automorphisms are field automorphisms (up to conjugation). Thus, for the proof
of the theorem, we can assume
that $S$ is simple of Lie type. We consider the following setup. Let $\bG$
be a simple algebraic group of simply connected type over an algebraic
closure of a finite field of characteristic~$r$, and let $F:\bG\rightarrow\bG$
be a Steinberg endomorphism, with group of fixed points $G:=\bG^F$. It is
well known that all finite simple groups of Lie type occur as $G/\bZ(G)$
with $G$ as before, except for the Tits group $\tw2F_4(2)'$. Since the latter
does not possess coprime automorphisms, we need not consider it here.
Furthermore, in all but finitely many cases, the group $G$ is the universal
covering group of $S=G/\bZ(G)$. None of the exceptions, listed for example in
\cite[Table~24.3]{MT}, has coprime automorphisms, apart from the Suzuki
group $\tw2B_2(8)$. But for $2^2.\tw2B_2(8)$, the outer automorphism of order
three permutes the three non-trivial central characters, hence our claim
holds.
For the proof of Theorem~\ref{thmA_simple} we may thus assume that $G$ and
$S$ are as above, and that $A$ induces a (necessarily cyclic) group of coprime
field automorphisms on $G$.

We first discuss the action of such automorphisms on the Lusztig series
$\cE(G,s)\subseteq\Irr(G)$ of irreducible characters of $G$, where $s$ runs
over semisimple elements of a dual group $G^*={\bG^*}^F$ of $G$. Let
$\bG\hookrightarrow\tilde\bG$ be a regular embedding, that is, $\tilde\bG$ is
connected reductive with connected center and with derived subgroup $\bG$.
Corresponding to this there exists a surjection $\tilde\bG^*\rightarrow\bG^*$
of dual groups. Note that all field automorphisms of $\bG$ are induced by
those of $\tilde\bG$. Let $\gamma$ be a field automorphisms of $\tilde\bG$.
We denote the corresponding field automorphism of $\tilde\bG^*$ also by
$\gamma$. Let $\tilde s\in\tilde G^*:=\tilde\bG^{*F}$ be semisimple. Now by
\cite[Prop.~3.5]{MN11}, $\gamma$ acts trivially on $\cE(\tilde G,\tilde s)$
whenever it stabilizes $\cE(\tilde G,\tilde s)$. Let $s\in G^*$ with preimage
$\tilde s$. Then by definition $\cE(G,s)$ consists of the constituents of
the restrictions of characters in $\cE(\tilde G,\tilde s)$ to $G$.

\begin{lem}   \label{lem:fld}
 In the above setting let $\gamma$ be a coprime (field) automorphism of $G$.
 If $\gamma$ stabilizes $\cE(G,s)$, then it fixes $\cE(G,s)$ pointwise.
\end{lem}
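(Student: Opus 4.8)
The plan is to reduce the assertion to the corresponding statement over the group $\w G:=\w\bG^F$ with connected centre, where it is available through \cite[Prop.~3.5]{MN11} as recalled above, and then to descend along restriction from $\w G$ to $G$ by Clifford theory and the coprimality hypothesis.

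First I would arrange that $\gamma$ stabilises a rational series of $\w G$ over $s$. Since field automorphisms are compatible with duality, $\gamma$ permutes the rational Lusztig series of $G$ according to its action on semisimple classes of $G^*$; hence $\gamma(\cE(G,s))=\cE(G,s)$ forces the $G^*$-class of $s$ to be $\gamma$-stable, and after conjugating we may take $\gamma(s)=s$. The $F$-fixed points in the preimage of $s$ under $\w\bG^*\to\bG^*$ form a nonempty torsor under the finite abelian group $(Z^*)^F$, where $Z^*:=\ker(\w\bG^*\to\bG^*)$; since $|\gamma|$ is prime to $|\w G|$ — the primes dividing $|\w G/G|$ already divide $|G|$ — it is prime to $|(Z^*)^F|$, so this torsor carries a $\gamma$-fixed point $\w s$. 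Then $\gamma$ stabilises $\cE(\w G,\w s)$, so by \cite[Prop.~3.5]{MN11} it fixes $\cE(\w G,\w s)$ pointwise; and by the description recalled above, each $\chi\in\cE(G,s)$ is a constituent of $\w\chi|_G$ for some ($\gamma$-fixed) $\w\chi\in\cE(\w G,\w s)$.

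It remains to show that $\gamma$ fixes each of the pairwise distinct, $\w G$-conjugate constituents of $\w\chi|_G$, rather than permuting them. These constituents form one orbit under the abelian group $\w G/G$ and are in $\gamma$-equivariant bijection with $\Irr(\w G/G)$ modulo the stabiliser $S_{\w\chi}$ of $\w\chi$ under twisting by linear characters; so it suffices that $\gamma$ act trivially on $S_{\w\chi}$. A coprime-torsor argument shows $\gamma$ already fixes at least one constituent, so $S_{\w\chi}$ is $\gamma$-stable; the real point is that $S_{\w\chi}$ is governed by the centre of $\bG$ — its order divides $|\bZ(\bG)^F|$ — and a coprime field automorphism acts trivially on $\bZ(\bG)^F$ (elementary: $|\bZ(\bG)^F|$ only involves primes dividing $|G|$, so writing $q=p^f$ with $|\gamma|\mid f$, the twist induced by $\gamma$ is trivial on $\bZ(\bG)^F$), hence on any central datum attached to $s$ of that order. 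Granting this, $\gamma(\chi)=\chi$.

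The delicate step is this last one: showing that the Clifford obstruction $S_{\w\chi}$ is pointwise $\gamma$-fixed. One way is the structural identification of $S_{\w\chi}$ with a subquotient of $\bZ(\bG)^F$ — via $\{z\in(Z^*)^F: \w s z\sim_{\w\bG^*}\w s\}$ and the component group $\cent{\bG^*}{s}/\cent{\bG^*}{s}^\circ$, which for $\bG$ simply connected embeds into the fundamental group of $\bG^*$ and so has order dividing $|\bZ(\bG)|$ — combined with the coprimality bound; an alternative is to run the argument through an equivariant Jordan decomposition for the possibly disconnected centraliser $\cent{\bG^*}{s}$, using that $\gamma$ fixes all unipotent characters and acts trivially on $\cent{\bG^*}{s}/\cent{\bG^*}{s}^\circ$. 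Coprimality is used twice: to produce the $\gamma$-fixed lift $\w s$, and, more subtly, to pin down the small central data over $s$.
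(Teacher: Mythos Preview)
Your reduction to \cite[Prop.~3.5]{MN11} via a $\gamma$-fixed lift $\w s$ is sound, and in fact you make explicit a point the paper passes over in silence: that one may choose the preimage $\w s$ so that $\cE(\w G,\w s)$ is $\gamma$-stable. From that point on, however, you take a considerably longer road than necessary.

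The paper's proof of the remaining step is a single counting observation. Once $\w\chi\in\cE(\w G,\w s)$ is $\gamma$-fixed, the $\gamma$-orbit of any constituent $\chi$ of $\w\chi|_G$ lies among those constituents, whose number is at most $|\bZ(G)|$. One then checks (case by case, or by inspection of the order formulae) that every prime $\le |\bZ(G)|$ divides $|G|$; since $(|\gamma|,|G|)=1$, every prime divisor of $|\gamma|$ exceeds $|\bZ(G)|$, and hence any $\gamma$-orbit of size at most $|\bZ(G)|$ is a singleton. That is the entire argument.

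Your structural route through $S_{\w\chi}$ and component groups works in spirit but is both unnecessary and imprecisely stated. Two points. First, the constituents of $\w\chi|_G$ are not in bijection with $\Irr(\w G/G)/S_{\w\chi}$; rather they form a torsor for $\w G/T$ (with $T$ the inertia group of $\chi$), which is Pontryagin dual to $S_{\w\chi}$, so the correct statement is that they are a torsor for $S_{\w\chi}^\vee$. Second, your justification that $\gamma$ acts trivially on $\bZ(\bG)^F$ ``because $|\bZ(\bG)^F|$ only involves primes dividing $|G|$'' is incomplete: coprimality of $|\gamma|$ and $|\bZ(\bG)^F|$ alone does not force a trivial action (think of inversion on $\bZ/3\bZ$). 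What one actually needs is that every prime dividing $|\Aut(\bZ(\bG)^F)|$ is at most $|\bZ(G)|$ and hence divides $|G|$ --- which is precisely the paper's observation again. So your ``delicate step'' ultimately rests on the same numerical fact, only reached after a detour through Clifford data that the direct orbit-size bound renders superfluous.
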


\begin{proof}
According to what we said before, $\gamma$ can only permute the
$G$-constituents of a fixed character $\chi\in \cE(\tilde G,\tilde s)$.
But the number of such constituents is bounded by $|\bZ(G)|$, and it is easily
checked that all primes not larger than $|\bZ(G)|$ divide $|G|$, so that all
prime divisors of the order of $\gamma$ are larger than the number of such
constituents. Thus the action has to be trivial.
\end{proof}

There are two quite different types of behaviour now. Either $p$ is the defining
characteristic of $G$, then coprime field automorphisms fix all $p$-blocks
(but certainly not all irreducible characters); or $p$ is different from
the defining characteristic, in which case all characters in an invariant
block are fixed individually (but not all $p$-blocks are invariant):

\begin{prop}
 In the above situation, Theorem~\ref{thmA_simple} holds when $p$ is the
 defining characteristic of $G$.
\end{prop}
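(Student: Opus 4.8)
The plan starts from the observation that, in the defining characteristic, the group $Z=\bZ(G)_p$ is trivial. Indeed, we have reduced to $G=\bG^F$ with $\bG$ simply connected — the only universal covering group carrying coprime automorphisms which is not of this form being $2^2.\tw2B_2(8)$, already dealt with above — and then $\bZ(G)=\bZ(\bG)^F$ lies in $\bT^F$ for an $F$-stable maximal torus $\bT$, hence has order prime to $p$. Thus $\nu=1_Z$ is trivial, $\Irr(G\mid\nu)=\Irr(G)$, and — since, as noted above, coprime field automorphisms fix every $p$-block of $G$ — Theorem~\ref{thmA_simple} in this case amounts to showing that \emph{every} $p$-block $B$ of $G$ contains an $A$-invariant irreducible character.

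To prove this I would exploit Lemma~\ref{lem:fld}. Write $A=\langle\gamma\rangle$ with $\gamma$ a coprime field automorphism; since $\gamma$ permutes the rational Lusztig series by $\gamma(\cE(G,s))=\cE(G,\gamma(s))$ (where $\gamma$ denotes also the dual field automorphism of $G^*$), it stabilizes $\cE(G,s)$ exactly when the $G^*$-class of $s$ is $\gamma$-stable, and in that case Lemma~\ref{lem:fld} forces $\gamma$ to fix \emph{every} character in $\cE(G,s)$. Hence it suffices to exhibit, for each $p$-block $B$, a semisimple $s\in G^*$ of $\gamma$-stable class with $\cE(G,s)\cap\Irr(B)\neq\emptyset$; any character in the intersection is then $A$-invariant. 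For the principal block $B_0$ this is immediate, as $1_G\in\cE(G,1)\cap\Irr(B_0)$ and the class of $1$ is $\gamma$-stable. For the remaining blocks the idea is to invoke the description of the $p$-blocks of $G$ in the defining characteristic (Humphreys): the defect group of $B$ is, up to conjugacy, the fixed points $U^F$ of the unipotent radical of an $F$-stable parabolic $P=L\ltimes U$, and $B$ is the Brauer correspondent of a block $b$ of $P^F=L^FU^F$ with defect group $U^F$. As $\gamma$ acts trivially on the root datum, it fixes $P$, $L$ and $U$, hence stabilizes $b$, and $b$ contains a suitable "Steinberg-type" character $\psi$ coming from $L^F$, which can be chosen $\gamma$-fixed (e.g.\ by an inductive analysis of the proper Levi $L$). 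Then $\psi\in\cE(L,s_0)$ with the class of $s_0$ forced $\gamma$-stable by Lemma~\ref{lem:fld}, so all constituents of $R_L^G(\psi)\in\mathbb{Z}\,\cE(G,s_0)$ lie in the $\gamma$-stable series $\cE(G,s_0)$ and are therefore $\gamma$-fixed; one then needs that some such constituent lies in $B$.

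The step I expect to be the main obstacle is precisely this last link — relating the Brauer correspondence $b\leftrightarrow B$ to Lusztig induction so as to deposit a $\gamma$-fixed character into $\Irr(B)$ — together with making the defining-characteristic block classification explicit enough to run the argument uniformly over all $\bG$ and $F$; concretely, one must show that the set of semisimple classes $\{[s]:\cE(G,s)\cap\Irr(B)\neq\emptyset\}$, on which $\gamma$ acts, always contains a $\gamma$-fixed point. The few small-rank or small-$q$ groups for which $G$ is not of the form $\bG^F$, or for which $p$ is bad, may need separate inspection, but — as recorded in the excerpt — none of these carries coprime automorphisms beyond the case already settled.
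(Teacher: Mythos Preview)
Your opening reduction is correct and agrees with the paper's strategy: $Z=\bZ(G)_p=1$ in defining characteristic, so the task is to place an $A$-invariant character in each block, and by Lemma~\ref{lem:fld} it suffices to find, for each positive-defect block $B$, a semisimple $s\in G^*$ with $\gamma$-stable class and $\cE(G,s)\cap\Irr(B)\neq\emptyset$. Where you diverge from the paper is in how you locate such an $s$, and here there is both a misstatement and a genuine gap. The version of Humphreys' theorem you invoke is not the right one: in defining characteristic the positive-defect blocks of $G=\bG^F$ (with $\bG$ simply connected) all have \emph{full} defect, and Humphreys' result is precisely that they are in bijection with $\Irr(\bZ(G))$. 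So the relevant parabolic is always a Borel, the Levi $L$ is a maximal torus, and your ``inductive analysis of the proper Levi'' collapses; there is no smaller group of Lie type to descend to. Your proposed mechanism --- pick a $\gamma$-fixed $\psi$ in the Brauer correspondent $b$, push it up by $R_L^G$, and hope a constituent lands in $B$ --- therefore amounts to asking for a $\gamma$-stable linear character of the torus whose Deligne--Lusztig induction meets $B$, and you have not shown such a character exists. This is exactly the obstacle you flag yourself, and it is not bypassed by the parabolic framework.

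The paper closes this gap in a much more direct way. Since the positive-defect blocks are parametrised by $\Irr(\bZ(G))$, and a $\gamma$-invariant block corresponds to a $\gamma$-stable central character $\nu$, one simply writes down --- type by type, in a short table --- a semisimple element $s\in G^*$ whose class corresponds under duality to $\nu$ (hence is $\gamma$-stable because $\nu$ is) and whose Lusztig series $\cE(G,s)$ lies over $\nu$, i.e.\ inside $\Irr(B)$. Lemma~\ref{lem:fld} then finishes. In other words, rather than trying to manufacture the $\gamma$-stable class abstractly via Brauer correspondence and Lusztig induction, the paper exploits the Humphreys parametrisation to reduce the existence question to an explicit (and easy) case check. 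Your outline would become a proof if you replaced the parabolic detour by this direct use of the bijection $\Bl(G)_{>0}\leftrightarrow\Irr(\bZ(G))$ and then exhibited the required $s$ for each $\gamma$-stable $\nu$.
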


\begin{proof}
In this case the $p$-blocks of positive defect of a group of Lie type $G$ are
in bijection with the characters of $\bZ(G)$, by a result of Humphreys
\cite{Hum}. Since the claim is certainly true for the principal block, we may
assume that $\bZ(G)\ne 1$, and so in particular $G$ is not a Suzuki or Ree
group. For each type of group and each $\gamma$-stable $1\ne\nu\in\bZ(G)$ we
give in Table~\ref{tab:defchar}
a semisimple element $s$ of the dual group $G^*$ of $G$ with the following
properties: the Lusztig series $\cE(G,s)$ of irreducible characters lies in
$\Irr(G|\nu)$, and the class of $s$ is $\gamma$-invariant (since $s$
corresponds to the $\gamma$-stable central character $\nu$).
It then follows that $\cE(G,s)$ is stable under all field automorphisms of
$G$, and this implies by Lemma~\ref{lem:fld} that the characters in $\cE(G,s)$
are individually stable, hence provide characters as claimed.
\end{proof}

\begin{table}[htbp]
\caption{Semisimple elements}   \label{tab:defchar}
\[\begin{array}{|r|r|l|l|}
\hline
 G& C_{G^*}(s)& o(\nu)& \text{conditions}\\
\hline
   \SL_n(q)& \GL_{n-1}(q)& \text{divides }(n,q-1)& (n,q-1)>1\\
   \SU_n(q)& \GU_{n-1}(q)& \text{divides }(n,q+1)& (n,q+1)>1\\
 \Spin_{2n+1}(q)& C_{n-1}& 2& q\text{ odd}\\
     \Sp_{2n}(q)& \GO_{2n}^\pm(q)& 2& q\text{ odd}\\
 \Spin_{2n}^\pm(q)& B_{n-1}& \text{divides }4& q\text{ odd}\\
     E_6(q)&     D_5& 3& q\equiv1\pmod3\\
 \tw2E_6(q)& \tw2D_5& 3& q\equiv2\pmod3\\
     E_7(q)&     E_6& 2& q\equiv1\pmod4\\
     E_7(q)& \tw2E_6& 2& q\equiv3\pmod4\\
\hline
\end{array}\]
\end{table}

Let us now turn to the case where $p$ is not the defining characteristic
of $G$.

\begin{prop}   \label{prop:crosschar}
 In the above situation, when $p$ is different from the defining
 characteristic of $G$, if $B$ is a $\gamma$-invariant $p$-block of $G$, then
 all $\chi\in\Irr(B)$ are fixed by $\gamma$. In particular
 Theorem~\ref{thmA_simple} holds in this case.
\end{prop}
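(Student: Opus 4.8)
The plan is to deduce the claim from the block theory of finite reductive groups together with Lemma~\ref{lem:fld}. By the theorem of Brou\'e and Michel, $B$ lies in a union $\cE_p(G,s)=\bigcup_t\cE(G,st)$ of Lusztig series, where $s\in G^*$ is a semisimple $p'$-element, $t$ runs over the $p$-elements of $\cent{G^*}{s}$ up to conjugacy, and $\cE_p(G,s)$ is itself a union of $p$-blocks of $G$. Since $\gamma$ maps $\cE_p(G,s)$ to $\cE_p(G,\gamma(s))$, and these sets partition $\Irr(G)$ as $s$ ranges over the $G^*$-classes of semisimple $p'$-elements, the $\gamma$-invariance of $B$ forces $\gamma(s)$ to be $G^*$-conjugate to $s$; as $\langle\gamma\rangle$ acts coprimely on $G^*$ it fixes a representative of that class, so we may assume $\gamma(s)=s$. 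Then $\gamma$ stabilises the series $\cE(G,s)$ (the term $t=1$), hence fixes it pointwise by Lemma~\ref{lem:fld}, and $\gamma$ acts on $\cent{G^*}{s}$, permuting the remaining series $\cE(G,st)$.

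By Lemma~\ref{lem:fld} it now suffices to show that $\gamma$ \emph{stabilises} every Lusztig series $\cE(G,st)$ (with $st\in G^*$ semisimple) that meets $\Irr(B)$ -- equivalently, that $\gamma(t)$ is $\cent{G^*}{s}$-conjugate to $t$ for each such $p$-element $t$ -- since then applying Lemma~\ref{lem:fld} to each of those series gives that $\gamma$ fixes $\Irr(B)$ pointwise; the assertion for $\IBr(B)$ follows at once by $\gamma$-equivariance of the decomposition map. To control these $p$-twists $t$ I would pass to a regular embedding $\bG\hookrightarrow\tilde\bG$, so that centralisers of semisimple elements in the dual are connected, and invoke the parametrisation of the $p$-blocks lying in $\cE_p(G,s)$ by unipotent blocks -- equivalently $e$-cuspidal pairs -- of $\cent{\bG^*}{s}^F$, due to Bonnaf\'e--Rouquier, Bonnaf\'e--Dat--Rouquier and Cabanes--Enguehard. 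This parametrisation is compatible with field automorphisms and with Jordan decomposition of characters (cf.\ \cite{MN11}), so the $\gamma$-invariance of $B$ passes to the $\gamma$-invariance of a unipotent $p$-block $b_0$ of $\cent{\bG^*}{s}^F$, and the series $\cE(G,st)$ meeting $B$ correspond to the Lusztig series of $\cent{\bG^*}{s}^F$ meeting $b_0$. For a unipotent block these series are labelled by $p$-elements lying in a dual of its defect group, and that defect group is built from $F$-stable tori on which $\gamma$ acts as a power map commuting with $F$; an induction on $\dim\bG$ -- with base case the principal block, where the claim is Lemma~\ref{lem:fld} with $s=1$ -- should then yield that the relevant $p$-element classes are $\gamma$-fixed. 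Granting this, every $\chi\in\Irr(B)$ is $\gamma$-invariant, hence $A$-invariant, and Theorem~\ref{thmA_simple} follows for such blocks.

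The step I expect to be the main obstacle is this middle one: establishing that the Jordan/Bonnaf\'e--Dat--Rouquier parametrisation of the blocks in $\cE_p(G,s)$ is genuinely $\gamma$-equivariant -- that is, tracking a coprime field automorphism through the Morita equivalences and the Jordan decomposition -- and then showing that inside a $\gamma$-invariant, ultimately unipotent, block no Lusztig series is actually moved by $\gamma$. The latter reduces to the action of $\gamma$ on the $F$-stable tori underlying the defect group of a unipotent block, using the (known but intricate) description of those defect groups; once that is in place, Lemma~\ref{lem:fld} does the rest.
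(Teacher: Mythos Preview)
Your overall skeleton matches the paper: place $B$ inside $\cE_p(G,s)$ via Brou\'e--Michel, argue that the class of $s$ is $\gamma$-stable, reduce to showing each constituent series $\cE(G,st)$ is $\gamma$-stable, and then finish with Lemma~\ref{lem:fld}. Where you diverge from the paper is precisely at the step you yourself flag as the obstacle, and the paper's solution there is far more elementary than the Bonnaf\'e--Dat--Rouquier/Cabanes--Enguehard machinery you reach for.

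The paper does not parametrise the blocks inside $\cE_p(G,s)$ at all, nor does it restrict attention to the series that actually meet $B$. Instead it lifts $s$ to a $\gamma$-stable preimage $\tilde s\in\tilde G^*$ (connected centralizer), sets $H=(\tilde G^*)^\gamma$, and invokes \cite[Prop.~3.12]{MN11}: since $p$ divides $|\bZ(G)|$ it divides the order of the Weyl group, and then $C_H(\tilde s)=C_{\tilde G^*}(\tilde s)^\gamma$ already contains a full Sylow $p$-subgroup of $C_{\tilde G^*}(\tilde s)$. Hence \emph{every} $p$-element of $C_{\tilde G^*}(\tilde s)$ has a $\gamma$-fixed conjugate $\tilde t$, so every series $\cE(\tilde G,\tilde s\tilde t)$, and thus every $\cE(G,st)$, is $\gamma$-stable. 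Lemma~\ref{lem:fld} then fixes the whole of $\cE_p(G,s)\supseteq\Irr(B)$ pointwise. No block parametrisation, no Morita equivalences, no induction on $\dim\bG$, and no need to track $\gamma$-equivariance through Jordan decomposition.

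So your proposed route is not wrong in spirit, but it replaces a two-line Sylow argument by a programme whose hardest ingredient (the $\gamma$-equivariance of the Bonnaf\'e--Dat--Rouquier equivalences and of the resulting unipotent-block labelling) you correctly identify as unproven in your write-up. The paper's insight is that one does not need to know \emph{which} series meet $B$: once $\tilde s$ is $\gamma$-fixed, a purely group-theoretic control of Sylow $p$-subgroups in the fixed-point subgroup forces all the relevant $p$-element classes to be $\gamma$-stable simultaneously.
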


\begin{proof}
Let $B$ be a $p$-block of $G$, $Z=\bZ(G)_p$ and $\nu\in\Irr(Z)$. Let $\gamma$
be a coprime (field) automorphism of $G$ fixing $B$ and $\nu$. By a result
of Brou\'e and Michel there exists a semisimple $p'$-element $s\in G^*$ such
that $B\subseteq \cE_p(G,s)$. Let $G\hookrightarrow\tilde G$ be a regular
embedding, with corresponding epimorphism $\tilde G^*\rightarrow G^*$ of dual
groups. Let $\tilde s\in \tilde G^*$ be a preimage of $s$. Since the class of
$s$ is $\gamma$-stable, the same argument as in the proof of Lemma~\ref{lem:fld}
shows that the class of $\tilde s$ is also $\gamma$-stable. Since the
centralizer of $\tilde s$ is connected, this means that we may assume without
loss of generality that $\tilde s$ itself is $\gamma$-stable, and so is
$C_{\tilde G^*}(\tilde s)$. Now consider $H:=(\tilde G^*)^\gamma$, the
fixed point subgroup of $\tilde G^*$ under $\gamma$. This is again a group
of Lie type, of the same type as $\tilde G^*$. Since $p$ divides $|\bZ(G)|$ by
assumption,
it also divides the order of the Weyl group of $G$. By \cite[Prop.~3.12]{MN11},
then $C_H(\tilde s)=C_{\tilde G^*}(\tilde s)^\gamma$ contains a Sylow
$p$-subgroup of $C_{\tilde G^*}(\tilde s)$. In particular, every semisimple
$p$-element in $C_{\tilde G^*}(\tilde s)$ has a $\gamma$-stable conjugate
$\tilde t\in\tilde G^*$. Then $\cE(G,\tilde s\tilde t)$ is $\gamma$-stable,
which implies that $\cE(G,st)$ is $\gamma$-stable, and hence fixed pointwise
by $\gamma$, by Lemma~\ref{lem:fld}. Thus, all elements of $B\cap\cE_p(G,s)$
are fixed by $\gamma$, as claimed.
\end{proof}

We next prove an analogous result for Brauer characters:

\begin{thm}  \label{thm_IBr_simple}
 Let $G$ be the universal covering group of a non-abelian simple group $S$,
 $A$ a group acting on $G$ with $(|G|,|A|)=1$, and $B\in\Bl_A(G)$. Then
 there exists an $A$-invariant Brauer character $\phi\in\IBr(B)$.
\end{thm}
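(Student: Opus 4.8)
The plan is to reduce Theorem~\ref{thm_IBr_simple} to the ordinary-character statement of Theorem~\ref{thmA_simple} via decomposition numbers, and to handle the defining-characteristic case separately since there the block structure is essentially trivial. As before we may assume $S$ is simple of Lie type, $G=\bG^F$, and $A$ induces a cyclic group $\langle\gamma\rangle$ of coprime field automorphisms. The first case is where $p$ is the defining characteristic: then $G$ has exactly one block of each central defect (one for each character of $\bZ(G)_p$) together with the principal block, again by Humphreys' result. The principal block contains the trivial Brauer character, which is $\gamma$-invariant; a block of central defect contains a single irreducible Brauer character, which is therefore $\gamma$-invariant as well. So this case is immediate.

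Now suppose $p$ is not the defining characteristic. Here I would exploit Proposition~\ref{prop:crosschar}: every ordinary character in a $\gamma$-invariant block is $\gamma$-fixed. First note $B$ has an $A$-invariant ordinary character $\chi\in\Irr(B)$ by Theorem~\ref{thmA_simple} (taking $\nu$ trivial if $Z=1$, or any $A$-invariant $\nu$ otherwise). Consider the decomposition matrix of $B$, indexed by $\Irr(B)\times\IBr(B)$. The automorphism $\gamma$ permutes $\Irr(B)$ and $\IBr(B)$ compatibly with decomposition numbers, i.e. $d_{\chi^\gamma,\phi^\gamma}=d_{\chi,\phi}$. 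Fix some $\chi_0\in\Irr(B)$; its row $(d_{\chi_0,\phi})_{\phi\in\IBr(B)}$ is a nonzero nonnegative integer vector, and $\gamma$ fixes $\chi_0$ (by Proposition~\ref{prop:crosschar}), so $\gamma$ preserves the multiset of entries of this row and permutes the set $\{\phi\in\IBr(B): d_{\chi_0,\phi}\ne 0\}$. That alone need not produce a fixed $\phi$; to force one I would use that $\langle\gamma\rangle$ has order coprime to $|G|$, hence in particular coprime to every $d_{\chi_0,\phi}$ that is nonzero and $<\text{(something dividing }|G|)$ — more robustly, coprime to $|\IBr(B)|$ since $|\IBr(B)|\le|\Irr(B)|\le|\text{conjugacy classes of }G|$ divides nothing a priori, so instead I would count: $\gamma$ acts on the finite set $\IBr(B)$, and the number of $\gamma$-orbits is congruent to $|\IBr(B)|$ modulo the (prime divisors of the) order of $\gamma$ only if $\gamma$ has prime order. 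Cleanly: write $|\langle\gamma\rangle|=q$, a $p'$-number (indeed a $\{\text{divisors of }|G|\}'$-number); the number of $\gamma$-fixed points on $\IBr(B)$ equals $|\IBr(B)|-q\cdot(\text{number of nontrivial orbits, weighted})$...

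Let me instead carry out the reduction more carefully. The robust route is: $\gamma$ fixes \emph{every} $\chi\in\Irr(B)$ by Proposition~\ref{prop:crosschar}, so the permutation of $\IBr(B)$ induced by $\gamma$ leaves invariant every row of the decomposition matrix $D$ of $B$, meaning $D^\gamma = D$ as a matrix after permuting only the columns; but $D$ has $\mathbb{Z}$-linearly independent columns (the irreducible Brauer characters are linearly independent as class functions on $p$-regular classes, and the decomposition map is injective on $\mathbb{Z}\IBr(B)$ into $\mathbb{Z}\Irr(B)$). Since each column of $D$ is distinct and the permutation $\sigma$ of columns induced by $\gamma$ satisfies $D\sigma = D$, we get $\sigma=\mathrm{id}$; hence $\gamma$ fixes every $\phi\in\IBr(B)$, a fortiori $A=\langle\gamma\rangle$ fixes some $\phi\in\IBr(B)$. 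The main obstacle is ensuring that the columns of the decomposition matrix of the block $B$ really are pairwise distinct (equivalently, that two different irreducible Brauer characters cannot have the same decomposition column), which is exactly the injectivity of the decomposition map restricted to $B$; this follows from the fact that $\mathbb{Z}\IBr(B)\hookrightarrow\mathbb{Z}\Irr(B)$, $\phi\mapsto\sum_\chi d_{\chi\phi}\chi$, is injective (a standard fact, see \cite{Navarro}), so no genuinely new difficulty arises beyond quoting it correctly; I would spell out this linear-algebra step and the defining-characteristic case, and the theorem follows.
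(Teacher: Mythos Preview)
Your non-defining-characteristic argument is essentially the paper's: once Proposition~\ref{prop:crosschar} gives that $\gamma$ fixes every $\chi\in\Irr(B)$, the fact that the irreducible Brauer characters in $B$ are determined by their decomposition columns (equivalently, that each $\phi\in\IBr(B)$ is an integral combination of the restrictions $\chi^0$ for $\chi\in\Irr(B)$) forces $\gamma$ to fix every $\phi\in\IBr(B)$. The paper states this in one sentence; your detour through orbit-counting before landing on the linear-algebra argument is unnecessary, but the final version is correct and matches the paper.

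The defining-characteristic case, however, has a genuine gap. You assert that apart from the principal block the blocks of $G$ have ``central defect'' and hence a single irreducible Brauer character. This is false. In defining characteristic $p$ the centre $\bZ(G)$ is a $p'$-group, so $\bZ(G)_p=1$; Humphreys' theorem says that the blocks of \emph{positive} defect are parametrised by $\Irr(\bZ(G))$ (the full centre, not its $p$-part), and each of them has a full Sylow $p$-subgroup of $G$ as defect group, not a central one. For example $\SL_2(q)$ with $q$ odd has, besides the Steinberg block, two blocks of full defect (one for each central character), and the non-principal one contains about $q/2$ irreducible Brauer characters. So an $A$-invariant block $B$ may well be a non-principal block of full defect with many Brauer characters permuted by $\gamma$, and nothing in your argument singles out a fixed one. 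The paper handles this by an explicit construction: it takes a Brauer character $\phi$ attached to a suitable fundamental weight $\omega$ (Table~\ref{tab:defIBr}) lying over a faithful central character, and uses Steinberg's tensor product theorem to see that $\phi':=\bigotimes_{i=0}^{a-1}\gamma^i(\phi)$ is again irreducible and visibly $\gamma$-invariant; a short congruence calculation with $d=\sum_{i=0}^{a-1} p^{ik}$ modulo $|\bZ(G)|$ then shows that, after choosing the starting block correctly, $\phi'$ lands in $B$. You will need to supply an argument of this kind.
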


\begin{proof}
As argued in the proof of Theorem~\ref{thmA_simple} we may assume that
$S$ is of Lie type and $A$ induces coprime field automorphisms. Moreover,
$G$ is not an exceptional covering group of $S$. First assume
that $p$ is the defining characteristic of $G$. Let $B$ be an $A$-invariant
block of $G$, corresponding to the central character $\nu$ of $G$. Then there
is a faithful irreducible Brauer character $\phi$ of $G/\ker(\nu)$
corresponding to a suitable fundamental weight $\omega$ of the underlying
algebraic group as given in Table~\ref{tab:defIBr}. If $G$ is untwisted,
defined over $\FF_q$ with $q=p^f$, then a generator $\gamma$ of $A$ has
order $a$ with $f=ka$. By Steinberg's tensor product theorem
(see \cite[Thm.~16.12]{MT}) then $\phi':=\bigotimes_{i=0}^{a-1}\gamma^i(\phi)$
is an irreducible Brauer character of $G$ corresponding to the weight
$\sum_{i=0}^{a-1}p^{ik}\omega$, which is $\gamma$-invariant. It lies over
the character $\nu^d$ of $\bZ(G)$, with $d=\sum_{i=0}^{a-1}p^{ik}$. Since
$|\bZ(G)|$ divides $p^k-1$ and $a$ is prime to $|\bZ(G)|$, this is again a
faithful character of $\bZ(G)$. Thus, this construction yields an invariant
Brauer character in the $p$-block lying above $\nu^d$. Starting instead with
$\phi$ in the $p$-block above $\nu^c$, with $cd\equiv1\pmod{o(\nu)}$, we find
an invariant character in $B$.

\begin{table}[htbp]
\caption{Faithful Brauer characters}   \label{tab:defIBr}
\[\begin{array}{|r|r|l|l|}
\hline
 G& \phi(1)& \text{weight}& \text{conditions}\\
\hline
   \SL_n(q),\SU_n(q)& \binom{n}{i}& \omega_i& (n,q\pm1)>1\\
 \Spin_{2n+1}(q)& 2^n& \omega_n& q\text{ odd}\\
     \Sp_{2n}(q)& 2n& \omega_1& q\text{ odd}\\
 \Spin_{2n}^\pm(q)& 2n,2^{n-1}& \omega_1,\omega_{n-1},\omega_n& q\text{ odd}\\
 E_6(q),\tw2E_6(q)&     27& \omega_1& 3{\not|}q\\
     E_7(q)&     56& \omega_7& q\text{ odd}\\
\hline
\end{array}\]
\end{table}

If $G$ is twisted, we may assume that it is not very twisted and that the
twisting has order~2 (since else there is just one $p$-block of positive
defect). So $G$ is defined over $\FF_{q^2}$, with $q=p^f$ and $a|f$. The same
argument as before applies in this case as well.
\par
Now assume that $p$ is different from the defining characteristic of $G$,
and let $B$ be an $A$-invariant $p$-block. Then $B$ is contained in
$\cE_p(G,s)$ for some semisimple element $s\in G^*$, and we showed in
Proposition~\ref{prop:crosschar} that all elements in $B$ are fixed by $A$.
Since any irreducible Brauer character in $B$ is an integral linear
combination of ordinary irreducible characters in $B$ restricted to
$p'$-classes, it follows that $\IBr(B)$ is fixed point-wise by $A$ as well.
\end{proof}

We conclude this section with the analogous result on central products of
quasi-simple groups.

\begin{cor}   \label{cor_dir_prod}
 Let $G$ be a finite group and let $A$ act on $G$ with $(|G|,|A|)=1$.
 Assume that $G/\bZ(G)$ is the direct product of $r$ isomorphic non-abelian
 simple groups that are transitively permuted by $A$. Let $B\in \Bl(G)$ be
 $A$-invariant.
 \begin{enumerate}[\rm (a)]
  \item Let $Z\in\Syl_p(\bZ(G))$ and $\nu\in\Irr(Z)$. Then $B$ contains
   some $A$-invariant $\chi\in\Irr(G| \nu)$.
  \item $B$ contains some $A$-invariant $\phi\in\IBr(G)$.
 \end{enumerate}
\end{cor}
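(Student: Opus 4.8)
The plan is to reduce the statement about the central product $G$ to the quasi-simple case already handled in Theorems~\ref{thmA_simple} and~\ref{thm_IBr_simple}. Write $\o G=G/\bZ(G)=S_1\times\cdots\times S_r$ with the $S_i$ isomorphic non-abelian simple and transitively permuted by $A$. Let $L$ be the universal covering group of $S_1$; since $(|A|,|G|)=1$ and the $S_i$ are non-abelian simple, $G$ is (a quotient of) a central product of $r$ copies of $L$, and more precisely we may work with $K:=L\times\cdots\times L$ ($r$ factors) together with the natural surjection $\pi\colon K\twoheadrightarrow G$; here $A$ acts on $K$ by permuting the factors (via the permutation action on $\{S_1,\dots,S_r\}$) combined, inside each factor, with the coprime automorphism induced on $S_1$. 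The key point is that blocks, $\Irr$, and $\IBr$ of $G$ correspond to the $\ker\pi$-stable (equivalently, suitably covering) objects for $K$, and $A$-invariance is preserved under this correspondence.

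For $K=L^r$ one first recalls the standard description of blocks and characters of a direct product: $\Bl(K)=\Bl(L)\times\cdots\times\Bl(L)$, $\Irr(K)=\Irr(L)\times\cdots\times\Irr(L)$, and similarly for $\IBr$. Since $A$ acts transitively on the factors, an $A$-invariant block $\hat B$ of $K$ has all its components in one $A_1$-orbit of $\Bl(L)$, where $A_1:=\mathrm{Stab}_A(S_1)$; after conjugating we may take $\hat B=B_1\boxtimes\cdots\boxtimes B_r$ with $B_1$ an $A_1$-invariant block of $L$ and $B_i$ its translates. By Theorem~\ref{thmA_simple} (applied with the group $A_1$, which still acts coprimely on $L$) $B_1$ contains an $A_1$-invariant $\chi_1\in\Irr(L\mid\nu_1)$ for the prescribed central character $\nu_1\in\Irr(\bZ(L)_p)$; taking $\chi:=\chi_1\boxtimes\cdots\boxtimes\chi_r$ with the $\chi_i$ the $A$-translates of $\chi_1$ gives an $A$-invariant character in $\hat B$ lying over the prescribed product central character. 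The same argument with Theorem~\ref{thm_IBr_simple} in place of Theorem~\ref{thmA_simple} produces an $A$-invariant $\hat\phi\in\IBr(\hat B)$. Finally, because $\ker\pi$ is a central $p'$-subgroup contained in $\bZ(K)$ (the Sylow $p$-part of $\bZ(L)$ is exactly the $Z$ in the statement, and it is the piece that survives in $G$), one checks that the chosen $\nu_1$'s can be matched to $\nu$ on $Z\le\bZ(G)$, that $\hat B$ corresponds to the given $A$-invariant $B\in\Bl(G)$, and that $\chi$ (resp.\ $\hat\phi$) is trivial on $\ker\pi$, hence descends to the desired $A$-invariant $\chi\in\Irr(G\mid\nu)$ (resp.\ $\phi\in\IBr(G)$).

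The main obstacle is bookkeeping the action of $A$ under the permutation of the factors together with the passage through $\ker\pi$: one must be careful that $\mathrm{Stab}_A(S_1)$ really does act coprimely on $L$ (it does, being a subgroup of $A$), that the block $B$ of $G$ pulls back to a single $A$-orbit of blocks of $K$ rather than a larger set, and that the central characters can be simultaneously arranged so the tensor product is $\ker\pi$-trivial while lying over $\nu$. I expect the honest content to be this orbit-and-central-character matching; the direct-product decomposition of blocks and the invocation of the quasi-simple results are routine once it is in place.
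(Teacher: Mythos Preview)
Your strategy matches the paper's: pass to the universal cover $X=\whS^r$ (your $K=L^r$) of the perfect part of $G$, use the product decomposition of blocks and characters, apply the quasi-simple theorems to the first factor under the stabilizer $A_1=\norm A{X_1}$, translate by coset representatives to the other factors, tensor, and descend. The verification that the tensor is $A$-invariant is exactly as you outline, and the paper writes it out in detail.

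There is one genuine gap. Your assertion that $\ker\pi$ is a $p'$-subgroup is false in general. After reducing to $G$ perfect (a step the paper does first, via $G=[G,G]\bZ(G)$ and the $\chi_0\cdot\nu$ construction of \cite[\S5]{IMN}, and which you omit), $G$ is an arbitrary central quotient of $X$, and $\ker\pi\leq\bZ(X)$ may well have nontrivial $p$-part. In particular $\bZ(X)_p$ need not map isomorphically onto $Z=\bZ(G)_p$, so choosing the $\nu_i$ on $\bZ(L)_p$ does not by itself ensure the tensor product is trivial on $\ker\pi$, nor that the $\nu_i$ are $A_1$-invariant. The paper sidesteps this by working with the full centre rather than its $p$-part: extend $\nu$ uniquely to $\w\nu\in\Irr(\bZ(G))$ with $\Irr(B\mid\w\nu)\ne\emptyset$, pull back to $\wh\nu\in\Irr(\bZ(X))$ (now trivial on $\ker\pi$ by construction, and $A$-invariant since $B$ and $\nu$ are), and decompose $\wh\nu=\wh\nu_1\times\cdots\times\wh\nu_r$. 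Theorem~\ref{thmA_simple} applied with the restriction of $\wh\nu_1$ to $\bZ(\whS)_p$ then yields an $A_1$-invariant $\psi_1\in\Irr(\wh B_1\mid\wh\nu_1)$, because the block $\wh B_1$ already determines the $p'$-part of the central character; the resulting $\psi$ lies over $\wh\nu$ and hence descends to $G$. Your ``orbit-and-central-character matching'' is precisely this step, but it requires the extension to $\bZ(G)$ first, not a direct match on Sylow $p$-subgroups.
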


\begin{proof}
Both parts can be shown analogously. We give here the proof of part (a).
Let $S$ be the simple non-abelian group such that $G/\bZ(G)$ is isomorphic to
the $r$-fold direct product of groups isomorphic to $S$.

It suffices to prove the statement in the case where $G$ is perfect.
Indeed, we have $G=[G,G]\bZ(G)$ by the given structure of $G$.
Assume that the statement holds in the case of perfect groups.
Hence the block $B'\in\Bl([G,G])$ covered by $B$ has then an $A$-invariant
character $\chi_0$ lying over $\nu_{\bZ(G)\cap [G,G]}$. The character
$\chi_0\cdot \nu$ defined as in \cite[Section 5]{IMN} as the unique character
in $\Irr(G| \chi_0)\cap \Irr(G| \nu)$ is then necessarily $A$-invariant.

In the following we consider the case where $G$ is perfect. Accordingly $G$
has a universal covering group, namely
\[ X:=\whS\times \cdots  \times \whS=\whS^r \text{ ($r$ factors) },\]
where $\whS$ is the universal covering group of $S$. Let
$\epsilon: X\rightarrow G$ be the associated canonical epimorphism.
Because of \cite[5.1.4]{GLS3} there is a canonical action of $A$ on $X$ induced
by the action of $A$ on $G$, such that $\epsilon$ is $A$-equivariant.
Note that the action of $A$ on $X$ is by definition coprime.

For $1\leq i \leq r$ let $X_i:=\{(1,\ldots,1,x,1,\ldots,1)\mid x\in \whS\}$,
which is canonically isomorphic to $\whS$. By assumption $A$ acts transitively
on the set of groups $X_i$. So for every $1\leq i \leq r$ there exists an
element $a_i\in A$ such that $X_1^{a_i}=X_i$.

The character $\nu\in\Irr(Z)$ can be uniquely extended to a character
$\w\nu\in\Irr(\bZ(G))$ such that $\Irr(B | \w\nu)\neq\emptyset$. Via
$\epsilon$ the character $\w\nu$ corresponds to some character
$\wh \nu\in\Irr(\bZ(X))$ that can be written as
\[ \wh \nu=\wh\nu_1\times\cdots \times \wh \nu_r \]
for suitable $\wh \nu_i\in\Irr(\bZ(\whS))$

The block $B$ corresponds to a unique block $\wh B\in\Bl(X)$, see
\cite[(9.9) and (9.10)]{Navarro}. Accordingly $\wh B$ is $A$-invariant and
can be written
as $\wh B= \wh B_1\times \cdots \times \wh B_r$ where $\wh B_i\in\Bl(\wh S)$.
The action of $A_i$ on $X$ induces then a coprime action of $A_i$ on $\wh S$
stabilizing $\wh B_i$. According to Theorem \ref{thmA_simple} there exists an
$A_1$-invariant character $\psi_1\in\Irr(\wh B_1| \wh \nu_1)$.

Via the canonical isomorphism between $\whX_i$ and $\whS$ the character
$\psi_i:=(\psi_1)^{a_i}\in\Irr(\whS)$ is well-defined. Since $\wh B$ and
$\wh \nu$ are $A$-invariant the character $\psi_i$ belongs to
$\Irr(\wh B_i| \wh \nu_i)$. Accordingly the character
$\psi:=\psi_1\times \cdots \times \psi_r$ belongs to $\Irr(\wh B|\wh \nu)$.

In the next step we prove that $\psi$ is $A$-invariant. Let $\phi_i$ be the
character of $X_i$ corresponding to $\psi_i$ via the canonical isomorphism.
Then it is sufficient to prove that for any $a\in A$ and $1\leq i,j\leq r$
with $X_i^a=X_j$ the characters $\phi_i^a$ and $\phi_j$ coincide.
The equality $X_i^a=X_j$ implies $(X_1^{a_i})^a=X_1^{a_j}$, and hence
$a_i a a_j^{-1}\in \norm{A}{X_1}$. On the other hand by the definition of
$\psi_i$ and $\phi_i$ we have
\[ (\phi_i)^a= (\phi_1)^{a_i a }=
   (\phi_1)^{a_i a a_j^{-1} a_j}= \phi_1^{a_j},\]
since $\phi_1$ is $\norm{A}{X_1}$-invariant. This proves that $\psi$
is $A$-invariant as required.

Part (b) follows from these considerations by applying
Theorem~\ref{thm_IBr_simple}.
\end{proof}

\section{A Gallagher type theorem for blocks}\label{sec_Gal_type_thm}

Let $G$ be a finite group and $N\nor G$. P.~X.~Gallagher proved that if
$\theta \in \Irr(N)$ has an extension $\w\theta \in \Irr(G)$, then the map
$\Irr(G/N) \rightarrow \Irr(G|\theta)$ given by
$\o\eta \mapsto \eta \w\theta$ is a bijection, where $\eta\in\Irr(G)$
is the lift of $\o\eta$. (See Corollary (6.17) of \cite{Isa}.) Now we need
a similar theorem for blocks, see Theorem~\ref{lem2_1_central_def_extending}.

For $\theta\in G$ we denote by $\bl(\theta)$ the $p$-block of $G$ containing
$\theta$.

\begin{lem}   \label{lem2_1_surj_map}
 Let $N\nor G$,  $b\in\Bl(N)$ and $\theta \in\Irr(b)$. Assume there exists an
 extension $\w\theta\in\Irr(G)$ of $\theta$. Then the map
 \[\upsilon: \Bl(G/N) \to \Bl(G| b) \text{ given by }\bl(\o \eta) \mapsto
    \bl(\w\theta\eta)\]
 is surjective, where $\eta\in \Irr(G)$ is the lift of $\o\eta\in\Irr(G/N)$ .
\end{lem}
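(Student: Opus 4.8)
The plan is to show surjectivity of $\upsilon$ by producing, for an arbitrary block $C \in \Bl(G \mid b)$, a block $\o\eta \in \Bl(G/N)$ with $\bl(\w\theta\eta) = C$. First I would pick any $\chi \in \Irr(C)$ lying over $\theta$; such $\chi$ exists precisely because $C \in \Bl(G \mid b)$ means $C$ covers $b$ (or at least contains a character over $\theta$ — one should be careful here about whether $\Bl(G\mid b)$ denotes blocks covering $b$ or blocks containing a character over $\theta$, but since $\theta$ extends, by Gallagher every character over $\theta$ has the form $\eta\w\theta$ and these all lie over $\theta$). By Gallagher's theorem applied to the extension $\w\theta$, we can write $\chi = \eta\w\theta$ for a uniquely determined $\eta \in \Irr(G/N)$ (inflated to $G$). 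Then $\bl(\w\theta\eta) = \bl(\chi) = C$, so it remains only to check that $\upsilon$ is well-defined, i.e. that $\bl(\w\theta\eta)$ depends only on $\bl(\o\eta)$ and not on the choice of $\eta$ within its block.

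The well-definedness is the crux, and I expect it to be the main obstacle. The cleanest approach is via central characters / the Brauer homomorphism on defect classes. Recall that a block $b$ of $G$ is determined by its central character $\omega_b \colon \zent{\CC G} \to \o{\FF}_p$ (reduction mod $p$ of the usual central character), evaluated on class sums. I would compute $\omega_{\bl(\w\theta\eta)}$ in terms of $\omega_{\bl(\w\theta)}$ and $\omega_{\bl(\eta)}$. The key identity is that for a class sum $\hat K$ of $G$ supported on a conjugacy class $K$, one has, up to a nonzero scalar depending only on degrees,
\[
 \omega_{\w\theta\eta}(\hat K) \;=\; \frac{(\w\theta\eta)(\hat K)}{(\w\theta\eta)(1)}
\]
and since $\eta$ is inflated from $G/N$, its value $\eta(g)$ for $g \in K$ depends only on the image $\o g \in G/N$, so $\eta$ is constant on the fibers of $K \to \o K$; meanwhile $\w\theta$ restricted to a single $G$-class behaves like an "ordinary" character. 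The point is to argue that $\omega_{\bl(\w\theta\eta)}(\hat K)$ can be expressed through $\omega_{\bl(\w\theta)}$ and $\omega_{\bl(\eta)}(\hat{\o K})$, so that if $\eta, \eta'$ lie in the same block of $G/N$ — meaning $\omega_{\bl(\eta)} = \omega_{\bl(\eta')}$ on $\zent{\o{\FF}_p[G/N]}$ — then $\bl(\w\theta\eta) = \bl(\w\theta\eta')$.

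Concretely I would proceed as follows. Let $K$ be a $p$-regular conjugacy class of $G$ (it suffices to test central characters on $p$-regular class sums, by Brauer's characterization of blocks). Write $\o K$ for its image in $G/N$ and note $|\o K|$ and $|K|$ differ by a factor dividing $|N|$; choosing $K$ of $p'$-size if needed, or arguing that the relevant ratios are $p$-integral. One computes
\[
 \frac{(\w\theta\eta)(\hat K)}{(\w\theta\eta)(1)}
 = \frac{|K|\,\w\theta(g)\eta(g)}{\w\theta(1)\eta(1)}
\]
for $g \in K$, and compares with the product of the corresponding expressions for $\w\theta$ and for $\eta$ (the latter reinterpreted on $G/N$). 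Reducing mod $p$, this exhibits $\omega_{\bl(\w\theta\eta)}$ as determined by $\omega_{\bl(\w\theta)}$ together with $\omega_{\bl(\eta)}$ pulled back along $G \to G/N$. The only subtlety is handling the degree ratios $\w\theta(1), \eta(1)$ modulo $p$ and the factor $|N|$-related normalization; here one uses that $\w\theta(1) = \theta(1)$ divides $|N|_{?}$ appropriately and that everything in sight is a $p$-local computation where the relevant denominators are units. Once well-definedness is established, surjectivity is immediate from the Gallagher parametrization as above, completing the proof.
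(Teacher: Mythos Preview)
Your surjectivity argument via Gallagher is correct and is exactly what the paper does. The gap is in well-definedness. Your proposed factorization
\[
\omega_{\w\theta\eta}(\hat K)=\frac{|K|\,\w\theta(g)\,\eta(g)}{\w\theta(1)\,\eta(1)}
\]
does not split into a product of two algebraic integers in the way you need. Writing it as $\omega_{\w\theta}(\hat K)\cdot\frac{\eta(g)}{\eta(1)}$, the second factor is generally not an algebraic integer and cannot be reduced mod $p$; writing it as $\frac{\w\theta(g)}{\w\theta(1)}\cdot\omega_{\o\eta}(\hat{\o K})\cdot\frac{|K|}{|\o K|}$ has the same defect in the first factor. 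Restricting to $p$-regular classes or to classes of $p'$-size does not cure this: you must test on \emph{all} ($p$-regular) class sums, and for those the degree ratios need not be $p$-local units. The ``subtlety'' you flag is thus the whole point, and your sketch does not resolve it.

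The paper's remedy (quoting \cite[Lemma~2.2]{NavarroSpaeth1}) is to introduce, for each $g\in G$, the intermediate subgroup $L$ with $L/N=\Cent_{G/N}(\o g)$. Since $\Cent_G(g)\sbs L$, one has $|G:\Cent_G(g)|=|G:L|\cdot|L:\Cent_L(g)|$, and this yields the clean identity
\[
\lambda_{\w\theta\eta}\big(\Cl_G(g)^+\big)
   =\lambda_{\w\theta_L}\big(\Cl_L(g)^+\big)\cdot
    \lambda_{\o\eta}\big(\Cl_{G/N}(\o g)^+\big).
\]
Both factors on the right are genuine central-character values on class sums, hence algebraic integers, and may be reduced mod $p$ separately. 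The first factor depends only on $\w\theta$ and $g$; the second is precisely $\lambda_{\bl(\o\eta)}$ evaluated on a class sum of $G/N$. Well-definedness follows at once. Your outline is on the right track but is missing this passage through $L$, without which the argument does not go through.
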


\begin{proof}
According to \cite[Lemma~2.2]{NavarroSpaeth1} we have for every $g\in G$ that
\[ \lambda_{\w\theta\eta}(\Cl_G(g)^+)=\lambda_{\w\theta_L}(\Cl_L(g)^+)
  \lambda_{\o\eta}(\Cl_{G/N}(\o g)^+),\]
where $L$ is defined by $L/N:=\Cent_{G/N}(\o g)$ and $\o g=gN$. This implies
that the blocks $\bl(\w\theta\eta)$ and $\bl(\w\theta\eta')$ coincide for
every two characters $\o \eta,\o\eta'\in\Irr(G/N)$ with
$\bl(\o\eta)=\bl(\o\eta')$. Hence $\upsilon$ is well-defined.

On the other hand, every block of $\Bl(G|b)$ has a character in
$\Irr(G| \theta)$ (by \cite[(9.2)]{Navarro}) and such a character can
be written as $\w\theta\eta$ for some $\o\eta\in\Irr(G/N)$, by Gallagher's
theorem. This proves that $\upsilon$ is surjective.
\end{proof}

In general, the map in Lemma  \ref{lem2_1_surj_map} is not a bijection. (For
instance, if $b$ has a defect group $D$ such that $\cent GD \sbs N$, then it
is well-known that there is a unique block of $G$ covering $b$, see
\cite[Lemma~3.1]{NavarroTiep}. On the other hand, $G/N$ might have many
$p$-blocks. Take, for instance, $G=\SL_2(3)$, $N=Q_8$, $p=2$, and
$\theta \in \Irr(N)$ the irreducible character of degree 2.) Our aim in
this section is to find general conditions which guarantee that the map in
Lemma~\ref{lem2_1_surj_map} is a bijection.

The following statement follows also from Theorem~3.3(d) of \cite{Harris},
where a Morita equivalence between the involved blocks is proven. For
completeness we nevertheless give here an alternative character theoretic
proof.

\begin{lem}   \label{lem2_1_simple}
 Let $N\nor G$ and $b\in\Bl(N)$ with trivial defect group.
 Let $\theta \in\Irr(b)$. Assume there exists an extension
 $\w\theta\in\Irr(G)$ of $\theta$. Then the map
 \[\upsilon: \Bl(G/N) \to \Bl(G| b) \text{ given by }\bl(\o \eta) \mapsto
   \bl(\w\theta\eta)\]
 is a bijection.
\end{lem}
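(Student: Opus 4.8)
The plan is to show that the surjection $\upsilon$ of Lemma~\ref{lem2_1_surj_map} is injective when $b$ has trivial defect group. Since we already know $\upsilon$ is well-defined and surjective, the whole task is to prove that $\bl(\o\eta)=\bl(\o\eta')$ whenever $\bl(\w\theta\eta)=\bl(\w\theta\eta')$, or equivalently to produce an inverse map. First I would note that because $b$ has trivial defect group, the unique character $\theta\in\Irr(b)$ vanishes on all $p$-singular elements of $N$; in fact $\theta$ is of $p$-defect zero, so $\theta(x)=0$ for every $x\in N$ whose order is divisible by $p$. The extension $\w\theta$ then also has the property that $\w\theta(g)=0$ whenever the $p$-part $g_p$ of $g$ does not lie in $N$ — this follows from the standard fact (see \cite{Navarro}) that a character lying over a defect-zero character of a normal subgroup vanishes off the set of elements whose $p$-part centralizes... more precisely, one uses that $\w\theta$ restricted to a Sylow $p$-subgroup behaves like an induced character, or one invokes Knörr's or Reynolds' results on blocks with a normal defect-zero subcharacter.

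The key computational input is the central character formula already quoted in the proof of Lemma~\ref{lem2_1_surj_map}: for $g\in G$,
\[ \lambda_{\w\theta\eta}(\Cl_G(g)^+)=\lambda_{\w\theta_L}(\Cl_L(g)^+)\,
   \lambda_{\o\eta}(\Cl_{G/N}(\o g)^+), \]
where $L/N=\Cent_{G/N}(\o g)$. To recover $\bl(\o\eta)$ from $\bl(\w\theta\eta)$ one must invert the factor $\lambda_{\w\theta_L}(\Cl_L(g)^+)$ modulo the maximal ideal of the relevant valuation ring. Here is where triviality of the defect of $b$ enters decisively: I expect that for $p$-regular $g$ one can show $\lambda_{\w\theta_L}(\Cl_L(g)^+)$ is a $p$-adic unit (indeed that the block $\bl(\w\theta_L)$ of $L$ has a specific, controllable defect), so that the congruence class of $\lambda_{\w\theta\eta}(\Cl_G(g)^+)$ modulo $p$ determines that of $\lambda_{\o\eta}(\Cl_{G/N}(\o g)^+)$ up to this unit, uniformly in $g$. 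Thus $\bl(\w\theta\eta)$ together with $\w\theta$ determines $\bl(\o\eta)$, giving the desired injectivity. Concretely I would define the candidate inverse $\Bl(G|b)\to\Bl(G/N)$ by $\bl(\w\theta\eta)\mapsto\bl(\o\eta)$ and check it is well-defined by exactly this congruence argument; alternatively, one can cite the Morita equivalence of \cite[Theorem~3.3(d)]{Harris} to get the bijection immediately, but the point is to give the self-contained character-theoretic argument.

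The main obstacle I anticipate is controlling $\lambda_{\w\theta_L}(\Cl_L(g)^+)$ for all $p$-regular $g$ simultaneously and showing these values are units: one needs that $\w\theta_L$, the restriction of $\w\theta$ to $L$, still lies in a block whose defect is "as small as possible" relative to $L/N$, so that its central character is non-degenerate on $p$-regular classes. This should follow from the fact that $b$ has defect zero — hence $b$ is, up to Morita equivalence, a matrix algebra, and the block $\bl(\w\theta)$ of $G$ (and of every intermediate $L$) is "the same block data" twisted by $G/N$ — but making this precise at the level of central character values requires the vanishing properties of $\w\theta$ off $p$-regular classes and a careful bookkeeping of which classes $\Cl_L(g)$ contribute. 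Once that is in hand, injectivity of $\upsilon$ is immediate and the lemma follows.
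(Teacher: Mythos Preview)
Your overall strategy matches the paper's: use the surjectivity from Lemma~\ref{lem2_1_surj_map} and the factorisation
\[
\lambda_{\w\theta\eta}(\Cl_G(g)^+)=\lambda_{\w\theta_L}(\Cl_L(g)^+)\,\lambda_{\o\eta}(\Cl_{G/N}(\o g)^+)
\]
to argue injectivity. The gap is in how you propose to handle the factor $\lambda_{\w\theta_L}(\Cl_L(g)^+)$. You hope to show it is a $p$-adic unit for every $p$-regular $g$, so that it can be cancelled uniformly. That is not true in general: for a fixed $p$-regular coset $\o g=gN$, different lifts $c\in gN$ give different $L$-classes and different values $\lambda_{\w\theta_L}(\Cl_L(c)^+)$, many of which may well reduce to~$0$. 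There is no reason for the block $\bl(\w\theta_L)$ to have small enough defect to force all such values to be units, and the vanishing properties of $\w\theta$ you mention do not produce this either.

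What the paper does instead is exactly what you are missing: it does not try to control any single value, but shows that for each $p$-regular $\o g$ there \emph{exists} some lift $c\in gN$ with $\lambda_{\w\theta_L}(\Cl_L(c)^+)^*\neq 0$. The device is the coset orthogonality relation \cite[Lemma~(8.14)]{Isa}:
\[
\sum_{c\in gN}\w\theta(c)\,\w\theta(c^{-1})=|N|.
\]
Grouping by $L$-classes in $gN$ and dividing by $\theta(1)$, the left side becomes $\sum_{s}\lambda_{\w\theta_L}(\Cl_L(s)^+)\,\w\theta(s^{-1})$ and the right side is $|N|/\theta(1)$, which is a $p'$-integer precisely because $\theta$ has defect zero. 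Reducing modulo the maximal ideal forces some summand to be nonzero, giving the required $c$. Then if $\bl(\o\eta)\neq\bl(\o\eta')$, pick $\o g$ $p$-regular with $\lambda_{\o\eta}(\Cl_{G/N}(\o g)^+)^*\neq\lambda_{\o\eta'}(\Cl_{G/N}(\o g)^+)^*$ and use this $c$ to separate $\bl(\w\theta\eta)$ from $\bl(\w\theta\eta')$. This is the missing idea you need to complete the argument.
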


\begin{proof}
By Lemma \ref{lem2_1_surj_map}, we know that $\upsilon$ is surjective.
Let $\o \eta,\o \eta'\in\Irr(G/N)$ with $\bl(\eta)\neq \bl(\eta')$. By
\cite[Exercise (3.3)]{Navarro} there exists some  $\o  g \in (G/N)^0$ with
$\lambda_{\o \eta}(\Cl_{G/N}(\o g)^+)\neq\lambda_{\o \eta'}(\Cl_{G/N}(\o g)^+)$.
Assume there exists some $c \in G$ with $cN=\o g$ with
\[ \lambda_{\w\theta_{L}}(\Cl_L(c)^+)^*\neq 0,\]
where $L$ is defined by $L/N:=\Cent_{G/N}( \o g)$. Then
$\bl(\w\theta\eta)\neq \bl(\w\theta\eta')$, since
\[\lambda_{\w\theta\eta}(\Cl_G(c)^+)=
  \lambda_{\w\theta_L}(\Cl_L(c)^+)\lambda_{\o\eta}(\Cl_{G/N}(\o g)^+). \]
Thus, let $g\in G$ with $gN=\o g$. Note that $gN$ is closed under
$L$-conjugation, and let $\sS$ be a representative set of the
$L$-conjugacy classes contained in $gN$, i.e.
\[ \bigcup_{s\in\sS}^. \Cl_L(s)=gN.\]
By \cite[Lemma (8.14)]{Isa} we have
\[ \sum_{c \in gN}\w \theta(c) \w\theta(c^{-1})=|N|. \]
This implies
\[ \sum_{s \in \sS} |\Cl_L(s)| \w \theta(s) \w\theta(s^{-1})=|N|. \]
Dividing by $\theta(1)$ we obtain
\[ \sum_{s \in \sS} \lambda_{\w\theta_L}(\Cl_L(s)^+)^* \w\theta(s^{-1})
  =\Big(\sum_{s \in \sS} \frac{|\Cl_L(s)| \w \theta(s)}{\theta(1)}
   \w\theta(s^{-1})\Big)^*=\left(\frac{|N|}{\theta(1)}\right )^*\neq 0 \]
since $\theta$ is of defect~0. This implies that for some $c\in\sS$ we
have $\lambda_{\w\theta_L}(\Cl_L(c)^+)^*\neq 0$ as required.
\end{proof}

Next, we generalize Lemma~\ref{lem2_1_simple}. Recall the
main result of \cite{Nav04}: for a finite group $X$, $Y\nor X$ and a character
$\nu\in\Irr(Y)$ there is a natural bijection $\dz(X) \rightarrow \rdz(X|\nu)$,
$\chi \mapsto \chi_\mu$, with $\frac{\chi(1)_p}{\nu(1)_p}=\frac{|X|_p}{|Y|_p}$,
where $\dz(X)$ denotes the characters lying in a defect zero block and
$\rdz(X| \nu)$ the set of characters $\chi\in\Irr(X| \nu)$,
see \cite{Nav04} for the definition of this bijection.

\begin{lem}  \label{lem_gab}
 Let $N\nor G$, and suppose that $D \nor G$ is contained in $N$. Let
 $\mu \in \Irr(D)$ be $G$-invariant. Let $\theta \in \dz(N/D)$.
 Then $\theta$ extends to $G$ if $\theta_\mu$ extends to $G$.
\end{lem}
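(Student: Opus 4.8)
The plan is to translate the statement about ordinary extendibility into a statement about the associated characters $\theta_\mu\in\rdz(N|\mu)$ via the Nav04 bijection, and then exploit the naturality of that bijection. First I would pass to the group $N$: we are given that $\theta\in\dz(N/D)$, inflate it to a character of $N$ lying in a block of defect group $D$ (since $D$ is the kernel on which $\theta$ is trivial and $\theta$ has defect zero in $N/D$, the block $\bl(\theta)$ of $N$ has defect group exactly $D$). The Nav04 bijection $\dz(N)\to\rdz(N|\mu)$, applied with $X=N$, $Y=D$, sends the inflation of $\theta$ to the character $\theta_\mu\in\Irr(N|\mu)$; its defining properties pin $\theta_\mu$ down uniquely, so I must show it is exactly this $\theta_\mu$ that extends. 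The key point is that the Nav04 construction is canonical, hence compatible with the action of $\operatorname{Aut}(N)$ fixing $D$ and $\mu$ setwise — in particular with conjugation by elements of $G$ (since $D\nor G$ and $\mu$ is $G$-invariant).

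So the main step is: show that $\theta_\mu$ is $G$-invariant if and only if $\theta$ (viewed in $\Irr(N/D)$, equivalently its inflation in $\Irr(N)$) is $G$-invariant. This follows because the bijection $\chi\mapsto\chi_\mu$ of \cite{Nav04} commutes with the conjugation action of $N_G(D)_\mu = G$ on $\Irr(N)$: an automorphism of $N$ stabilizing $D$ and $\mu$ permutes $\dz(N)$ and $\rdz(N|\mu)$ and, by uniqueness in the characterization of the bijection, does so compatibly. Hence $\theta$ extends to $G$ iff $\theta$ is $G$-invariant and the obstruction cohomology class in $H^2(G/N,\CC^\times)$ vanishes, and likewise for $\theta_\mu$. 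To pass from invariance of one to extendibility of the other I would use that these two cohomology classes (the obstructions to extending $\theta$ and $\theta_\mu$ from $N$ to $G$) coincide. This is the crucial technical ingredient: the character triple $(G,N,\theta)$ — really $(G/D,N/D,\theta)$ — and the character triple $(G,N,\theta_\mu)$ should be shown to have the same obstruction class. One way is to invoke that $\theta_\mu$ and $\theta$ "differ" by the $G$-invariant character $\mu$ in a controlled way; more precisely, if $G_\mu = G$ acts, then reducing modulo $D$ and using that $\mu$ extends trivially in the sense relevant here, the scalar $2$-cocycles attached to $\theta$ on $G/N$ and to $\theta_\mu$ on $G/N$ agree.

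The cleanest route is probably to set up a character triple isomorphism $(G,N,\theta_\mu)\to(G/D,N/D,\theta)$ respecting the relevant data. Such an isomorphism would immediately give that one member extends to $G$ iff the other does (extendibility being preserved under character triple isomorphisms, by \cite[Problem (11.13)]{Isa} or the corresponding statement in \cite{Navarro}); combined with the fact that extension to $G$ of a $G$-invariant character is governed solely by the character triple, this yields the lemma in the stronger biconditional form, and in particular the "if" direction we need. The existence of this character triple isomorphism, compatibly with the Nav04 bijection, is where I expect the real work to lie; the rest is bookkeeping with normal subgroups and invariance.

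The hard part will be establishing that the obstruction-to-extension cocycles match — i.e. producing the character triple isomorphism $(G,N,\theta_\mu)\cong(G/D,N/D,\theta)$ canonically from the \cite{Nav04} bijection, rather than merely knowing the bijection exists at the level of character sets. Everything else (the identification of $\bl(\theta)$ as having defect group $D$, the $G$-equivariance of inflation, the transfer of invariance) is routine.
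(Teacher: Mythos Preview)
The paper's proof is considerably more direct and avoids character triples and cocycle comparisons entirely. It reduces to showing that $\theta$ extends to each $Q$ with $Q/N$ a Sylow $q$-subgroup of $G/N$. When $q=p$, the defect-zero character $\theta$ of $N/D$ extends to $Q/D$ automatically (Problem~(3.10) of \cite{Navarro}), with no hypothesis on $\theta_\mu$ needed at all. When $q\ne p$, one takes an extension $\eta\in\Irr(Q)$ of $\theta_\mu$; since $|Q:N|$ is prime to $p$ one has $\eta(1)_p=\theta_\mu(1)_p$, hence $\eta\in\rdz(Q|\mu)$. Now apply the \cite{Nav04} bijection with the overgroup $Q$ in place of $N$: there is some $\gamma\in\dz(Q/D)$ with $\gamma_\mu=\eta$, and a direct value computation (using that $\hat\mu(g)\ne0$ whenever $g_p\in D$) gives $\gamma_N=\theta$. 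So $\gamma$ is the required extension.

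Your plan, by contrast, keeps the \cite{Nav04} bijection at the level of $N$ only and then tries to match the obstruction classes in $H^2(G/N,\CC^\times)$ for $\theta$ and $\theta_\mu$, ultimately via a character triple isomorphism $(G,N,\theta_\mu)\cong(G/D,N/D,\theta)$. You rightly flag this isomorphism as ``where the real work lies,'' but you do not supply it; as written, the plan is a reformulation of the difficulty rather than a resolution of it. The paper's device---moving the \cite{Nav04} bijection up to $Q$ and exploiting the $p'$-index to force the extension into $\rdz(Q|\mu)$---bypasses the cocycle comparison completely. Your route, if carried through, would yield the stronger biconditional and a cleaner structural statement, but it is genuinely more work than the one-paragraph argument the paper gives.
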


\begin{proof}
Suppose that $\theta_\mu$ extends to $G$. We want to show that $\theta$
extends to $G$. It is enough to show that $\theta$ extends to $Q$, whenever
$Q/N$ is a Sylow $q$-subgroup of $G/N$. If $Q/N \in \Syl_p(G/N)$, then $\theta$
considered as a character of $N/D$ has defect zero, and therefore it
extends to $Q/D$ in this case (see, for instance, Problem~(3.10) of
\cite{Navarro}). So $\theta$ as a character of $N$ extends to $Q$. Now suppose
that $Q/N$ is a $q$-group for some $q \ne p$ and recall the bijection
$\dz(Q/D) \rightarrow \rdz(Q|\nu)$ from \cite{Nav04}.
We know that $\theta_\mu$ extends to some $\eta \in \Irr(Q)$. Now,
$\eta(1)_p=\theta_\mu(1)_p$, and therefore $\eta \in \rdz(Q|\mu)$. Then
$\eta=\gamma_\mu$ for some $\gamma \in \dz(Q/D)$. By the values of the
functions $\gamma_\mu$ and using that $\hat\mu (g) \ne 0$ whenever $g_p \in D$
we check that $\gamma_N=\theta$.
\end{proof}

\begin{thm}   \label{lem2_1_central_def_extending}
 Let $N\nor G$ and $b\in\Bl(N)$ with a defect group $D$ with $D\cent GD=G$.
 Let $\theta \in\Irr(b)$. Assume there exists an extension $\w\theta\in\Irr(G)$
 of $\theta$. Then the map
 \[\upsilon: \Bl(G/N) \to \Bl(G| b) \text{ given by }\bl(\o \eta) \mapsto
   \bl(\w\theta\eta)\]
 is a bijection.
\end{thm}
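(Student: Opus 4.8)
The plan is to strip off $D$ and reduce to Lemma~\ref{lem2_1_simple}. First I would record the consequences of the hypothesis: $D\cent GD=G$ forces $D\nor G$, hence $D\nor N$, and also $N=D\cent ND$ (if $n\in N$ and $n=dc$ with $d\in D,c\in\cent GD$, then $c=d^{-1}n\in N\cap\cent GD=\cent ND$). In particular $N$ and $G$ act on $D$ by inner automorphisms, so the character of $D$ lying under $\theta$ is uniquely determined; call it $\mu\in\Irr(D)$, and note that $\mu$ is $G$-invariant, a fortiori $N$-invariant.

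Since $D$ is a normal $p$-subgroup of $G$ contained in $N$, the domination maps $\Bl(G)\to\Bl(G/D)$ and $\Bl(N)\to\Bl(N/D)$ are bijections; writing $\bar B,\bar b$ for the blocks dominated by $B\in\Bl(G)$ and $b$ respectively, they are compatible with covering and hence restrict to a bijection $\Bl(G|b)\to\Bl(G/D|\bar b)$, while $\bar b$ has trivial defect group (defect groups pass to their images modulo $D$). As $\upsilon$ is surjective by Lemma~\ref{lem2_1_surj_map}, it suffices to prove $|\Bl(G/N)|=|\Bl(G/D|\bar b)|$; applying Lemma~\ref{lem2_1_simple} to $N/D\nor G/D$ and the defect-zero block $\bar b$, this in turn reduces to showing that the unique character $\bar\theta\in\Irr(\bar b)$ extends to $G/D$.

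To produce that extension I would apply Lemma~\ref{lem_gab} to $D\nor N$ (inside $G$) with the $G$-invariant $\mu$: viewed as an element of $\dz(N/D)$, $\bar\theta$ extends to $G$ — hence, being trivial on $D$, to $G/D$ — provided that $\bar\theta_\mu\in\rdz(N|\mu)$ extends to $G$, where $\bar\theta_\mu$ is attached to $\bar\theta$ by the correspondence of \cite{Nav04}. That correspondence respects blocks in the domination sense, so $\bar\theta_\mu$ lies in $b$ and over $\mu$. The crucial point, and the place where $D\cent GD=G$ is really used, is that $b$ has a \emph{unique} irreducible character over $\mu$ — which forces $\bar\theta_\mu=\theta$, and $\theta$ extends to $G$ by hypothesis. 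To see this uniqueness, write $N=D*\cent ND$ as a central product over $Z:=\bZ(D)$, so that $b=b_D*b_0$ with $b_D$ the unique block of the $p$-group $D$ and $b_0\in\Bl(\cent ND)$; a defect-group computation for central products shows that a defect group of $b_0$ is contained in $Z$, while on the other hand $Z$ is a normal $p$-subgroup of $\cent ND$ and therefore lies in every defect group of $b_0$. Hence $b_0$ has defect group exactly $Z$, which is central in $\cent ND$; since for a block with central defect group the restriction of characters to that defect group is a bijection onto its characters, translating through the central product yields precisely one irreducible character of $b$ over each character of $D$.

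Assembling the pieces: $\bar\theta$ extends to $G/D$, so Lemma~\ref{lem2_1_simple} gives $|\Bl(G/N)|=|\Bl((G/D)/(N/D))|=|\Bl(G/D|\bar b)|=|\Bl(G|b)|$, and a surjection between finite sets of equal size is a bijection. The genuine obstacle, I expect, is exactly the identification $\bar\theta_\mu=\theta$: one must verify carefully that under the stated hypothesis $\theta$ is the only irreducible character of $b$ over $\mu$, and that the bijection of \cite{Nav04} carries each block to its dominated block; everything else is a formal combination of Lemmas~\ref{lem2_1_surj_map}, \ref{lem_gab} and~\ref{lem2_1_simple}.
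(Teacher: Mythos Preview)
Your proof is correct and follows the paper's approach: reduce to a cardinality equality via Lemma~\ref{lem2_1_surj_map}, pass to $G/D$ by domination, and apply Lemma~\ref{lem2_1_simple} once the canonical character $\theta_1$ (the unique member of $\Irr(b)$ with $D\subseteq\ker\theta_1$) is shown to extend via Lemma~\ref{lem_gab}. The paper compresses that last step into a single sentence, whereas you spell out the identification $(\bar\theta_1)_\mu=\theta$ through the central-product decomposition $N=D*\cent ND$; this extra detail is sound and supplies precisely what the paper leaves implicit.
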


\begin{proof}
By Lemma~\ref{lem2_1_surj_map} it is sufficient to prove that
$|\Bl(G/N)|=|\Bl(G|b)|$.

By \cite[Thm.~(9.12)]{Navarro}, there exists a unique character
$\theta_1\in\Irr(N)$ of $b$ with $D\sbs \ker{\theta_1}$. By Lemma~\ref{lem_gab}
the character $\theta_1$ extends to $G$. Write  $\o N=N/D$ and $\o G=G/D$.
Then Lemma \ref{lem2_1_simple} applies to the character
$\o \theta_1\in\Irr(N/D)$ associated to $\theta_1$. Then
$|\Bl(\o G/\o N)|=|\Bl(\o G| \o b)|$, where $\o b=\bl(\o \theta_1)$.
Now by \cite[Thm.~(9.10)]{Navarro} there is a canonical
bijection between the blocks of $\o G$ and the blocks of $G$, given by
domination. Using \cite[Thm.~(9.2)]{Navarro} with Brauer characters,
we easily check that under this bijection, a block $\o B$ of $\o G$ covers
$\o b$ if and only if $B$ covers $b$. This proves the statement.
\end{proof}

\section{Dade's Ramification Group}\label{sec_Dade_group}
In order to further generalize Theorem~\ref{lem2_1_central_def_extending}, we
need to go deeper and use a subgroup with remarkable properties introduced
by E.~C.~Dade in 1973, see \cite{Dade_blockextensions}. This subgroup is key
in the remainder of this paper.
We shall use M.~Murai's version of it (see \cite{Murai_Dade}).

Suppose that $N \nor G$ and that $\theta \in \Irr(N)$ is $G$-invariant. If
$x, y \in G$ are such that $[x,y] \in N$, then Dade and Isaacs  defined a
complex number $\langle\langle x,y\rangle\rangle_\theta$, in the following
way:  since $N\langle y\rangle/N$ is cyclic, it follows that  $\theta$ extends
to some $\psi \in \Irr(N\langle y\rangle)$. Now, $\psi^x$ is some other
extension and by Gallagher's theorem, there exists some
$\lambda \in \Irr(N\langle y\rangle)$ such that $\psi^x=\lambda \psi$.
Now $\langle\langle x,y\rangle\rangle_\theta = \lambda(y)$. The properties of
this (well-defined) number are listed in Lemma (2.1) and Theorem (2.3) of
\cite{Isa_sympl} which essentially assert that
$\langle\langle,\rangle\rangle_\theta$ is multiplicative (in both arguments).
Thus, if $H, K$ are subgroups of $G$ with $[H,K] \sbs N$, then we have uniquely
defined a subgroup
$$H^\perp \cap K:=\{ k \in K \, |\, \langle\langle k,h\rangle\rangle_\theta=1\,
   \text{ for all } \, h \in H \} \, .$$

\begin{lem}   \label{perp}
 Suppose that $N \nor G$ and that $\theta \in \Irr(N)$ is $G$-invariant.
 Suppose that $H, K$ are subgroups of $G$ with $[H,K] \sbs N$, and $N \sbs K$.
 If $\rho \in \Irr(K)$ extends $\theta$, then $\rho_{H^\perp \cap K}$ is
 $H$-invariant.
\end{lem}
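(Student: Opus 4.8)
The plan is to use the multiplicativity of the bilinear form $\langle\langle\cdot,\cdot\rangle\rangle_\theta$ together with the way $\rho$ extends $\theta$ to compute how $\rho$ transforms under conjugation by elements of $H$. Concretely, fix $h\in H$ and set $L:=H^\perp\cap K$. I want to show $\rho^h=\rho$ as characters of $L$, equivalently $\rho^h(k)=\rho(k)$ for all $k\in L$. The idea is that, since $\rho$ extends $\theta$ and $[h,k]\in N$, the number $\langle\langle h,k\rangle\rangle_\theta$ should compare $\rho^h$ and $\rho$ on the cyclic-mod-$N$ group $N\langle k\rangle$; and since $k\in H^\perp\cap K$ means precisely that $\langle\langle k,h\rangle\rangle_\theta=1$, the comparison is trivial.

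The key steps, in order, are as follows. First, I would recall the definition of $\langle\langle h,k\rangle\rangle_\theta$: extend $\theta$ to $\psi\in\Irr(N\langle k\rangle)$, write $\psi^h=\lambda\psi$ with $\lambda\in\Irr(N\langle k\rangle/N)$ via Gallagher, and then $\langle\langle h,k\rangle\rangle_\theta=\lambda(k)$. Second, I would observe that since $N\subseteq K$ and $\rho\in\Irr(K)$ extends $\theta$, the restriction $\rho_{N\langle k\rangle}$ is itself an extension of $\theta$ to $N\langle k\rangle$; hence by Gallagher's theorem $\rho_{N\langle k\rangle}=\mu\psi$ for a unique linear $\mu\in\Irr(N\langle k\rangle/N)$. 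Third, I would compute $(\rho^h)_{N\langle k\rangle}=(\rho_{N\langle k^{h^{-1}}\rangle})^h$; here one must be a little careful because conjugation by $h$ may not normalize $\langle k\rangle$, but since $[h,k]\in N$ we have $k^{h^{-1}}\in N\langle k\rangle$ (indeed $k^{h^{-1}}=k\cdot[k,h^{-1}]\cdots$ lies in $Nk$), so $N\langle k^{h^{-1}}\rangle=N\langle k\rangle$ and the restriction of $\rho^h$ to $N\langle k\rangle$ makes sense and equals $(\mu\psi)^h=\mu^h\psi^h=\mu^h\lambda\psi$ (using that $\mu$ is a character of $N\langle k\rangle/N$, which $h$ normalizes, and that it is $h$-invariant because $N\langle k\rangle/N$ need not be $h$-stable — so here I will instead argue that $\mu$, being determined by $\rho$ and $\psi$, transforms in a controlled way). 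Fourth, evaluating at $k$: $\rho^h(k)=\mu^h(k)\lambda(k)\psi(k)$ while $\rho(k)=\mu(k)\psi(k)$, and since $k\in H^\perp\cap K$ we have $\lambda(k)=\langle\langle h,k\rangle\rangle_\theta$ (or its inverse, depending on the convention; by the symmetry/multiplicativity in Lemma (2.1) of \cite{Isa_sympl} this equals $1$). One then concludes $\rho^h$ and $\rho$ agree on all of $L$ after checking that the $\mu^h$ versus $\mu$ discrepancy also disappears on $L$ — which again follows from the defining property of $H^\perp\cap K$ applied to the relevant generators.

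The main obstacle I anticipate is the bookkeeping around the fact that conjugation by $h\in H$ need not normalize the cyclic group $\langle k\rangle$ or even $N\langle k\rangle$ as a set in the naive sense, and the need to phrase everything purely in terms of the well-defined pairing rather than choosing incompatible Gallagher decompositions for $\rho^h$ and $\rho$. To handle this cleanly I would lean on the multiplicativity statement (Theorem (2.3) of \cite{Isa_sympl}): the assignment $k\mapsto\langle\langle h,k\rangle\rangle_\theta$ is a homomorphism on $\{k\in K:[h,k]\in N\}$, and $\rho_{H^\perp\cap K}$ being $H$-invariant is equivalent to the assertion that for each $h\in H$ the linear character of $L$ measuring $\rho^h/\rho$ — which \emph{a priori} is $k\mapsto\langle\langle h,k\rangle\rangle_\theta^{\pm1}$ — is trivial on $L$. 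That equivalence, i.e. that $\rho^h(k)/\rho(k)=\langle\langle h,k\rangle\rangle_\theta$ for $k$ in the relevant set, is really the content of the Dade--Isaacs construction applied to the extension $\rho$ in place of an extension over $N\langle k\rangle$, and once it is stated in that form the lemma is immediate from the definition of $H^\perp\cap K$. I expect the write-up to consist mostly of making this identification precise and citing the appropriate parts of \cite{Isa_sympl}.
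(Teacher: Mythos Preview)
Your approach is correct and is at heart the paper's argument, but you are making it harder than necessary. The simplification you are missing is that, since the pairing $\langle\langle\cdot,\cdot\rangle\rangle_\theta$ is independent of the chosen extension, you may take $\psi := \rho_{N\langle k\rangle}$ from the outset; with this choice $\mu=1$ and your anticipated ``$\mu^h$ versus $\mu$'' obstacle evaporates. The paper's proof is then two lines: for $m\in M:=H^\perp\cap K$ set $J=N\langle m\rangle$, write $(\rho_J)^h=\lambda\,\rho_J$ with $\lambda\in\Irr(J/N)$ via Gallagher, and conclude $\rho^h(m)=\lambda(m)\rho(m)=\langle\langle h,m\rangle\rangle_\theta\,\rho(m)=\rho(m)$. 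Your normalization worry is also dispatched in one remark: $[H,K]\subseteq N$ forces $H$ to normalize every subgroup between $N$ and $K$, in particular $N\langle m\rangle$. Finally, even in your more elaborate setup the $\mu$-discrepancy would vanish automatically, since $\mu$ is trivial on $N$ and $k^{h^{-1}}\in Nk$ gives $\mu^h(k)=\mu(k)$ for all such $k$, not only those in $H^\perp$.
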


\begin{proof}
Notice that $N \sbs H^\perp \cap K=:M$.  Let $\nu=\rho_{M}$.
We claim that $\nu$ is $H$-invariant. (Since $[H,K] \sbs N$, notice that $H$
normalizes every subgroup between $N$ and $K$.) Now, let $h \in H$ and
$m \in M$. We want to show that $\nu^h(m)=\nu(m)$. Let $J=N\langle m\rangle$,
and write $(\nu_J)^h=\lambda \nu_J$, where $\lambda \in \Irr(J/N)$. Then
$\lambda(m)=\langle\langle h, m\rangle\rangle_\theta=1$, and $\nu^h(m)=\nu(m)$.
\end{proof}

For any block $b\in\Bl(N)$ with defect group $D$, we call a character
$\eta\in\Irr(D\cent N D)$ canonical character of $b$ if $D\sbs \ker\eta$
and $\bl(\eta)^N=b$, see \cite[p.~204]{Navarro}.

\begin{thm}   \label{Dade}
 Let $N \nor G$, and let $b \in \Bl(N)$ be $G$-invariant. Then there exists
 a subgroup $N \sbs G[b] \nor G$, uniquely determined by $G$ and $b$,
 satisfying the following properties.
 \begin{enumerate}[\rm(a)]
  \item Suppose that $D$ is any defect group of $b$, and let
   $\eta \in \Irr(D\cent ND)$ be a canonical character of $b$. Let $K$ be
   the stabilizer of $\eta$ in $D\cent GD$, and let $H$ be the stabilizer of
   $\eta$ in $\norm ND$. Then $G[b]=N(H^\perp \cap K)$.
  \item If $B \in \Bl(G)$ covers $B' \in \Bl(G[b])$ and $B'$ covers $b$, then
   $B$ is the only block of $G$ that covers $B'$.
 \end{enumerate}
\end{thm}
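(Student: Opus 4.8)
The plan is to \emph{define} $G[b]:=N(H^\perp\cap K)$ from one fixed choice of a defect group $D$ of $b$ and a canonical character $\eta$ of $b$ as in~(a), and then to show, in turn, that this group is well defined, is normal in $G$, and has property~(b); the uniqueness assertion will then be automatic. Two preliminary observations are used throughout. First, since $b$ is $G$-invariant, $G$ permutes the set of defect groups of $b$, which forms a single $N$-conjugacy class; as $N$ is already transitive on it, a Frattini argument yields $G=N\norm GD$ for every such $D$. Second, for a fixed $D$ the canonical characters of $b$ form a single $\norm ND$-orbit.

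Well-definedness and normality are elementary bookkeeping. The group $H^\perp\cap K$ lies in $D\cent GD\sbs G=\norm GN$, hence normalises $N$, so $M:=N(H^\perp\cap K)$ is a subgroup with $N\nor M$. If $D$ is replaced by $D^n$ and $\eta$ by $\eta^n$ for some $n\in N$, then $K$, $H$ and the Dade--Isaacs pairing transform equivariantly, so $H^\perp\cap K$ is replaced by $(H^\perp\cap K)^n$; but $[k,n]\in N$ for every $k\in H^\perp\cap K$, whence $k^n\in(H^\perp\cap K)N$, so $N(H^\perp\cap K)^n=N(H^\perp\cap K)=M$, and $M$ does not depend on the choice of $(D,\eta)$. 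For normality, take $g\in\norm GD$: then $\eta^g$ is again a canonical character of $b$ for $D$ (because $b^g=b$ and $g$ normalises $D$), so $\eta^g=\eta^n$ for some $n\in\norm ND$; conjugating the whole construction by $g$ and using the previous step gives $M^g=N(H^\perp\cap K)^n=M$. Since $G=N\norm GD$ and $M$ is stable under conjugation by $N$ and by $\norm GD$, we obtain $M\nor G$, and the formula in~(a) then exhibits $G[b]$ as a normal subgroup of $G$ attached to $(G,b)$ alone.

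What remains, and what is the real obstacle, is property~(b): this is Dade's block extension theorem in the formulation due to Murai, and Lemma~\ref{perp} is its decisive input. One reduces first to the situation of a common defect group: a defect group of any block of $G[b]$, or of $G$, covering $b$ meets $N$ in a defect group of $b$, so after conjugating one may assume $D=D'\cap N=D''\cap N$ for defect groups $D'$ of $B'$ and $D''$ of $B$. Brauer's first main theorem applied to the chain $N\nor G[b]\nor G$ then transfers the covering problem to the Brauer correspondents in $\norm ND\nor\norm{G[b]}D\nor\norm GD$, with the canonical character $\eta$ taking over the role formerly played by~$b$. At this point Lemma~\ref{perp} enters: an extension $\rho\in\Irr(K)$ of $\eta$ restricts to an $H$-invariant character of $H^\perp\cap K$, and this rigidity is exactly what collapses the block-theoretic Clifford correspondence over $\eta$ between the blocks of $N(H^\perp\cap K)$ and the relevant blocks of $\norm GD$ to a bijection, forcing $B$ to be the only block of $G$ covering $B'$. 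For the remaining technical details of this step we follow \cite{Dade_blockextensions} and \cite{Murai_Dade}; carrying it out in full is the hard part of the argument, the rest being the reductions and the equivariance computations above.
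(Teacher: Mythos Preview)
Your proposal is essentially correct, but it does considerably more work than the paper's own proof, which is little more than a citation. The paper simply observes that $\norm ND$ and $D\cent GD$ are normal in $\norm GD$ with intersection $D\cent ND$, so that $[H,K]\sbs D\cent ND$ and the Dade--Isaacs pairing $\langle\langle\,,\,\rangle\rangle_\eta$ is defined; it then invokes \cite[Thm.~3.13]{Murai_Dade} for part~(a) and \cite[Thm.~3.5]{Murai_Dade} for part~(b). In particular, the well-definedness of $G[b]$ and its normality are not argued in this paper at all---they are imported from Murai---whereas you work them out explicitly (and correctly) via the Frattini argument and the equivariance of the pairing under conjugation.

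One point of caution concerns your treatment of~(b). You describe Lemma~\ref{perp} as the ``decisive input'' for the uniqueness of the covering block, and sketch a reduction via Brauer correspondents. In the architecture of this paper, however, Lemma~\ref{perp} is \emph{not} used in the proof of Theorem~\ref{Dade}; it is used later, in the proof of Theorem~\ref{thm_block_bijection}, to control stabilizers of blocks over $b'$ after one already knows $G[b]=G$. Property~(b) here is taken wholesale from Murai (ultimately Dade), and your outline---Harris--Kn\"orr rather than the first main theorem is the correct transfer mechanism, and the existence of an extension $\rho\in\Irr(K)$ of $\eta$ is itself nontrivial (cf.\ the use of \cite{KoshitaniSpaeth1} and Lemma~\ref{lem_gab} in the proof of Theorem~\ref{thm_block_bijection})---is a plausible reconstruction but not what the paper does. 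Since you ultimately defer to \cite{Dade_blockextensions} and \cite{Murai_Dade} as well, your argument and the paper's land in the same place; your extra detail on~(a) is a genuine addition, while your sketch of~(b) should be read as heuristic rather than as an independent proof.
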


\begin{proof}
We have that $\norm ND$ and $D \cent GD$ are normal subgroups of $\norm GD$
whose intersection is $D\cent ND$. In particular, $[H, K] \sbs D\cent ND$ for
every pair of subgroups $H \sbs \norm ND$ and $K \sbs D\cent GD$.
Now, part~(a) follows from \cite[Thm.~3.13]{Murai_Dade}, while part~(b)
follows from \cite[Thm.~3.5]{Murai_Dade}.
\end{proof}

The subgroup $G[b]$ has many further properties, but we restrict ourselves to
those that shall be used in this paper. For our later considerations we
mention the following.

\begin{cor}   \label{Dade_cor}
 Let $N \nor G$, and let $b \in \Bl(N)$ be $G$-invariant.
 For  $Z\nor G$ with $N\cap Z=1$  let $\o b\in \Bl(NZ/Z)$ be the induced block.
 Then $G[b]/Z= \o G[\o b]$ for $\o G:= G/Z$.
\end{cor}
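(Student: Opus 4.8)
The plan is to prove the identity by unwinding the explicit description $G[b]=N(H^\perp\cap K)$ from Theorem~\ref{Dade}(a) on both sides and comparing, transporting all the local data along the canonical isomorphism $\iota$ between $N$ and $\o N:=NZ/Z$. To begin, note that $[N,Z]\sbs N\cap Z=1$, so $NZ=N\times Z$ and the projection $G\to\o G$ restricts to the isomorphism $\iota$; here $\o b=\iota(b)$, and since $Z$ centralises $N$ the $G$-invariance of $b$ passes to $\o G$-invariance of $\o b$, so $\o G[\o b]$ makes sense.

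First I would check that $Z\sbs G[b]$, so that $G[b]/Z$ is meaningful. Fix a defect group $D$ of $b$ (necessarily $D\sbs N$) and a canonical character $\eta\in\Irr(D\cent ND)$, and let $H\sbs\norm ND$, $K\sbs D\cent GD$ be as in Theorem~\ref{Dade}(a). Because $Z$ centralises $N$, hence $D\cent ND$, we get $Z\sbs\cent GD\sbs D\cent GD$ and $Z$ fixes $\eta$, so $Z\sbs K$; and for $z\in Z$, $h\in H$ one has $[z,h]\in[Z,N]=1$ while $z$ centralises $D\cent ND\spann<h>$, so $\langle\langle z,h\rangle\rangle_\eta=1$. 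Hence $Z\sbs H^\perp\cap K\sbs N(H^\perp\cap K)=G[b]$.

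Next I would transport the ingredients through $\iota$. Put $\o D:=\iota(D)=DZ/Z$ and let $\o\eta$ be the character corresponding to $\eta$ under $\iota$. Since $\iota$ is a group isomorphism sending $b$ to $\o b$, it sends $D$ to a defect group of $\o b$, and (as $D\sbs\ker\eta$ and Brauer induction are preserved by isomorphisms) $\o\eta$ to a canonical character of $\o b$ for $\o D$. One then has $\cent{\o N}{\o D}=\cent ND\cdot Z/Z$ and $\norm{\o N}{\o D}=\norm ND\cdot Z/Z$ (using $[N,Z]=1$ and the Dedekind equality $DZ\cap N=D$), and the key point $\cent{\o G}{\o D}=\cent GD\cdot Z/Z$: if $gZ$ centralises $\o D$ then $[g,D]\sbs Z$, while $D^g\sbs DZ\cap N=D$ forces $g\in\norm GD$, whence $[g,D]\sbs D\cap Z=1$, that is, $g\in\cent GD$. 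Thus $\o D\cent{\o G}{\o D}=D\cent GD\cdot Z/Z$ and $\o D\cent{\o N}{\o D}=\iota(D\cent ND)$; since $\iota$ intertwines conjugation by $x\in D\cent GD$ with conjugation by $xZ$, and likewise transports the extensions of $\eta$ defining $\langle\langle\,,\,\rangle\rangle_\eta$, the stabilisers and perp appearing in Theorem~\ref{Dade}(a) for $\o b$ come out as $\o H=HZ/Z$, $\o K=KZ/Z$ and $\o H^\perp\cap\o K=(H^\perp\cap K)Z/Z$. Applying Theorem~\ref{Dade}(a) to $\o G,\o N,\o b$ then yields $\o G[\o b]=\o N(\o H^\perp\cap\o K)=N(H^\perp\cap K)Z/Z=G[b]Z/Z=G[b]/Z$, the last step by $Z\sbs G[b]$.

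The hard part will be the centraliser compatibility $\cent{\o G}{\o D}=\cent GD\cdot Z/Z$: for a general normal subgroup $Z$ centralisers of subgroups do not pass to quotients so cleanly, and one genuinely needs $D\sbs N$ together with $N\cap Z=1$ — via $D^g\sbs DZ\cap N=D$ — to return to $\norm GD$ and then conclude $[g,D]=1$. Everything else is bookkeeping: verifying that defect groups, canonical characters, inertia subgroups and the Dade--Isaacs symbol all transport faithfully along $\iota$.
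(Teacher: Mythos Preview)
Your proof is correct and follows essentially the same route as the paper: both unwrap Theorem~\ref{Dade}(a) on each side, identify $\o D=DZ/Z$, $\o\eta$, $\o H=HZ/Z$, $\o K=KZ/Z$, verify $\langle\langle k,h\rangle\rangle_\eta=\langle\langle kZ,hZ\rangle\rangle_{\o\eta}$, and conclude $\o H^\perp\cap\o K=(H^\perp\cap K)Z/Z$. You supply more detail than the paper does --- in particular the centraliser compatibility $\cent{\o G}{\o D}=\cent GD\cdot Z/Z$ via the Dedekind identity $DZ\cap N=D$, which the paper simply asserts --- but the strategy is the same.
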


\begin{proof}
From the assumptions we see $Z\sbs \cent{G}N$ and so $Z\sbs G[b]$ by
Theorem~\ref{Dade}(a).
Let $D$ be a defect group of $b$, $\eta$ a canonical character of $b$,
$K$ the stabilizer of $\eta$ in $D\cent GD$ and $H$ the stabilizer of $\eta$
in $\norm N D$. By Theorem~\ref{Dade}(a) we have $G[b]:=N(H^\perp \cap K)$.

Now $DZ/Z$ is a defect group of $b$, $\o\eta$ is the canonical character of
$b$ induced by $\eta$, $\o K:=KZ/Z$ the stabilizer of $\o \eta$ in
$\o D\cent {\o G} {\o D}$ and $\o H:=HZ/Z$ the stabilizer of $\o \eta$ in
$\norm {NZ/Z} {\o D}$. For $k \in K $ and $h\in H$ we have
\[\langle \langle k ,h \rangle \rangle _{ \eta}
   = \langle \langle k Z ,h Z \rangle \rangle _{\o\eta}. \]
This leads to $\o H^\perp\cap \o K= (H^\perp\cap K) Z/Z$. Together with
Theorem~\ref{Dade}(a) this implies the statement.
\end{proof}

The properties of $G[b]$ allow us to generalize
Theorem~\ref{lem2_1_central_def_extending} to the following situation.

\begin{thm}   \label{thm_block_bijection}
 Let $N\nor G$ and $b\in\Bl(N)$.
 Let $\theta \in\Irr(b)$. Assume there exists an extension $\w\theta\in\Irr(G)$
 of $\theta$. If $G[b]=G$,  then the map
 \[\upsilon: \Bl(G/N) \to \Bl(G| b) \text{ given by }\bl(\o \eta) \mapsto
    \bl(\w\theta\eta)\]   is a bijection.
\end{thm}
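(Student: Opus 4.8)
The plan is to reduce Theorem~\ref{thm_block_bijection} to the already-established case where $b$ has central defect, namely Theorem~\ref{lem2_1_central_def_extending}, by killing off a suitable normal subgroup. By Lemma~\ref{lem2_1_surj_map} the map $\upsilon$ is surjective, so as in the proof of Theorem~\ref{lem2_1_central_def_extending} it suffices to show $|\Bl(G/N)| = |\Bl(G \mid b)|$. First I would fix a defect group $D$ of $b$ together with a canonical character $\eta \in \Irr(D\cent ND)$, and recall from Theorem~\ref{Dade}(a) that $G[b] = N(H^\perp \cap K)$, where $K$ is the stabilizer of $\eta$ in $D\cent GD$ and $H$ the stabilizer of $\eta$ in $\norm ND$. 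The hypothesis $G[b] = G$ is exactly the statement that $G = N(H^\perp \cap K)$.

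The key idea is to use Lemma~\ref{perp}: since $\w\theta$ is an extension of $\theta$, and $\theta$ extends $\eta$ up to the block correspondence (more precisely, one works with the canonical character at the level of $D\cent ND$, applying Lemma~\ref{perp} with the roles $N \rightsquigarrow D\cent ND$, $K \rightsquigarrow K$, $H \rightsquigarrow H$, and $\rho$ the appropriate extension of $\eta$ to $K$), we get that restriction to $M := H^\perp \cap K$ yields an $H$-invariant character. The point of the hypothesis $G = N(H^\perp \cap K)$ is that it forces the relevant ``obstruction'' encoded by $\langle\langle\,,\,\rangle\rangle$ to vanish globally, so that $\w\theta$ and the block $b$ behave, with respect to the covering relation $\Bl(G/N) \to \Bl(G\mid b)$, exactly as in the central-defect situation after passing to $\o G := G/D$, $\o N := N/D$. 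Concretely, I would show $b$ is covered — via domination in the sense of \cite[Thm.~(9.10)]{Navarro} — by a block $\o b$ of $\o N$ with central (indeed trivial) defect, and that the extension $\w\theta$ descends compatibly, so that Theorem~\ref{lem2_1_central_def_extending} (or already Lemma~\ref{lem2_1_simple}) applies to give $|\Bl(\o G/\o N)| = |\Bl(\o G \mid \o b)|$. Since $\o G/\o N \cong G/N$ and the domination bijection between $\Bl(\o G)$ and $\Bl(G)$ matches up blocks covering $\o b$ with blocks covering $b$ — checking this using \cite[Thm.~(9.2)]{Navarro} with Brauer characters, exactly as at the end of the proof of Theorem~\ref{lem2_1_central_def_extending} — we conclude $|\Bl(G/N)| = |\Bl(G\mid b)|$.

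I expect the main obstacle to be the bookkeeping that makes the reduction to central defect legitimate: one must verify that the hypothesis $G[b] = G$ really does buy enough to push $\w\theta$ through the quotient by $D$ and to identify $\Bl(G/N)$ with $\Bl(\o G/\o N)$ compatibly with the $\upsilon$-maps on both levels. In particular, justifying that the extension $\w\theta$ of $\theta$ gives rise to (or can be replaced by) an extension of the canonical/defect-zero character to which Lemma~\ref{lem2_1_central_def_extending} applies — i.e. reconciling the character $\theta \in \Irr(b)$ that is given to extend with the specific character $\theta_1$ of $b$ having $D \sbs \ker\theta_1$ used there — is the delicate part. Here I would either invoke Lemma~\ref{lem_gab} again (as in Theorem~\ref{lem2_1_central_def_extending}) to replace $\theta$ by $\theta_1$, noting that extendibility of $\theta$ is equivalent to that of $\theta_1$ under the present hypotheses, or argue directly that the number $|\Bl(G\mid b)|$ does not depend on which extendible character of $b$ one starts from. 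Everything else — well-definedness and surjectivity of $\upsilon$, the behaviour of domination under covering — is already available from Lemma~\ref{lem2_1_surj_map}, Corollary~\ref{Dade_cor}, and the cited results of \cite{Navarro}.
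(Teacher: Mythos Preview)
Your overall strategy --- reduce to Theorem~\ref{lem2_1_central_def_extending} by taking a quotient by $D$ --- does not go through as stated, because the defect group $D$ of $b$ is in general \emph{not} normal in $G$ (nor even in $N$). The hypothesis $G[b]=G$ only yields $G=N\cent GD$ (via Theorem~\ref{Dade}(a)); while $\cent GD$ normalizes $D$, the factor $N$ merely permutes the defect groups of $b$ transitively. Hence the quotients $\o G=G/D$ and $\o N=N/D$ that you want to form do not exist, the domination argument from \cite[(9.10)]{Navarro} is unavailable at the level of $G$, and the appeal to Lemma~\ref{lem_gab} (which requires $D\nor G$) to pass from $\theta$ to a character $\theta_1$ with $D\sbs\ker\theta_1$ cannot be made globally. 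This is not just bookkeeping: it is the essential obstacle separating Theorem~\ref{thm_block_bijection} from Theorem~\ref{lem2_1_central_def_extending}.

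The paper circumvents this by working \emph{locally}. It applies Theorem~\ref{lem2_1_central_def_extending} not to $(G,N,b)$ but to $(D\cent GD,\,D\cent ND,\,b')$, where $b'$ is a root of $b$ and $D$ is genuinely normal; the required extension of a character of $b'$ to $D\cent GD$ is supplied by \cite[Thm.~C(a.2)]{KoshitaniSpaeth1} together with Lemma~\ref{lem_gab}. This gives $|\Bl(G/N)|=|\Bl(D\cent GD\mid b')|$, using $G/N\cong D\cent GD/D\cent ND$. The passage back to $|\Bl(G\mid b)|$ then runs through the Harris--Kn\"orr correspondence $|\Bl(G\mid b)|=|\Bl(\norm GD\mid \tilde b)|$ and a comparison of $\Bl(D\cent GD\mid b')$ with $\Bl(\norm GD\mid\tilde b)$ via block induction; it is precisely here that Lemma~\ref{perp} is used, to show that the extension $\w\eta$ of the canonical character to $K=D\cent GD$ is $H$-invariant, which forces the induced blocks $e_i^{\norm GD}$ to be pairwise distinct. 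So the ingredients you name (Lemma~\ref{perp}, Lemma~\ref{lem_gab}, Theorem~\ref{lem2_1_central_def_extending}) are the right ones, but they must be assembled inside $D\cent GD\le\norm GD$ rather than via a nonexistent global quotient.
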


\begin{proof}
By Lemma~\ref{lem2_1_surj_map} it is enough to show that
$|\Bl(G|b)| =|\Bl(G/N)|$. By Theorem~\ref{Dade}(a), we have that
$G=N\cent GD$, where $D$ is any defect group of $b$. Let $b'$ be any block
of $D\cent ND$ with defect group $D$ inducing $b$. By \cite[Thm.~C(a.2)]
{KoshitaniSpaeth1},
there exists some $\theta' \in \Irr(b')$ that extends to $K=D\cent GD$.
By Lemma~\ref{lem_gab} the unique (canonical) character $\eta \in \Irr(b')$
that has $D$ in its kernel extends to some $\w\eta \in \Irr(K)$.
Let $T$ be the stabilizer of $\eta$ in
$\norm GD$, hence $D\cent GD = K \sbs T \sbs \norm GD$.
Let $H=T\cap N$. Accordingly $[H,K]\subseteq D\cent N D$ and $T=KH$.
By Theorem~\ref{lem2_1_central_def_extending},
we have that
$$|\Bl(G/N)|=|\Bl(D\cent GD/D\cent ND)|=|\Bl( D\cent GD |b')|.$$
Now by Lemma~\ref{perp}, we have that $\w\eta$ is $H$-invariant. Hence, we
conclude that $T$ is the stabilizer of every block of $H$ covering $b'$
(using Gallagher's theorem). Now, let $\tilde b=(b')^{\norm ND}$.
By the Harris-Kn\"orr correspondence \cite[Thm.~(9.28)]{Navarro} we
have $|\Bl(G|b)|=|\Bl(\norm G D|\tilde b)|$.
If $\{e_1, \ldots, e_s\}$ are the blocks of $D\cent GD$ covering $b'$,
then $\{e_1^{\norm GD}, \ldots, e_s^{\norm GD}\}$ are all the blocks of
$\norm GD$ covering $\w b$. Now, if $(e_i)^{\norm GD}=(e_j)^{\norm GD}$,
then it follows that $(e_i)^x=e_j$ for some $x \in \norm GD$.
Since $e_i$ and $e_j$ only cover the block  $b'$, it follows that $(b')^x=b'$,
and $x \in T$.  However $e_i$ is $T$-invariant, and therefore
$e_i=e_j$.
\end{proof}

\section{Character triple isomorphisms under coprime actions and
  blocks}   \label{sec_char_trip}
In considerations using character triples the existence of an isomorphic
character triple whose character is linear and faithful plays an important
role. We give here an $A$-version of this statement that will be used later.
Furthermore we analyze how blocks behave under the bijections of characters
If $A$ acts on $G$, then $\Irr_A(G)$ denotes the set of the irreducible
complex characters of $G$ which are invariant under $A$. Let $\IBr_A(G)$ be
defined analogously.

\begin{prop}   \label{prop4_1}
 Let $A$ act on $G$ with $(|G|,|A|)=1$ and $N\nor G$ be $A$-stable.
 \begin{enumerate}[\rm (a)]
  \item  Let $\theta\in\Irr_A(N)$. Then there exists a character triple
   $(G^*,N^*,\theta^*)$, an action of $A$ on $G^*$ stabilizing $N^*$ and
   $\theta^*$ and an isomorphism
   $$(\iota,\sigma): (G,N,\theta)\longrightarrow (G^*,N^*,\theta^*),$$
   such that $N^*\sbs \bZ(G ^*)$ and both $\iota$ and
   $\sigma_{G}$ are $A$-equivariant.
  \item Let $\theta\in\IBr_A(N)$. Then there exists a modular character
   triple $(G^*,N^*,\theta^*)$, an action of $A$ on $G^*$ stabilizing $N^*$
   and $\theta^*$ and an isomorphism
   $$(\iota,\sigma): (G,N,\theta)\longrightarrow (G^*,N^*,\theta^*),$$
   such that $N^*\sbs \bZ(G^*)$ is a $p'$-group and both $\iota$
   and $\sigma_{G}$ are $A$-equivariant.
 \end{enumerate}
\end{prop}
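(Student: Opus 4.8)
The plan is to reduce to the classical construction of an isomorphic character triple with central (linear, faithful) normal subgroup, as in \cite[Chapter 11]{Isa} or \cite[Theorem (8.28)]{Navarro}, and to check that the construction can be made $A$-equivariant. The starting point is that a character triple $(G,N,\theta)$ is determined, up to isomorphism, by cohomological data: the stabilizer is all of $G$ since $\theta$ is $G$-invariant here (we are not assuming this, so first pass to the general theory), the projective representation $\cP$ of $G$ associated with $\theta$ and its factor set $\alpha\in H^2(G/N,\CC^\times)$, together with the action of $G/N$ on $N$. The standard argument produces a finite central extension $\hat G$ of $G/N$ by a cyclic group $N^*$ on which $G/N$ acts trivially, an isomorphism $G^*/N^*\cong G/N$, and a faithful linear character $\theta^*$ of $N^*$ realizing the same factor set; this gives the required $(\iota,\sigma)$. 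I would construct $G^*$ concretely from $\cP$: if $\cP:G\to\GL_n(\CC)$ is a projective representation with $\cP(n)$ scalar for $n\in N$ (so that we may instead take the values $\cP(g)$ for $g$ in a transversal of $N$, extended by scalars), then $G^*$ is the subgroup of $\GL_n(\CC)$ generated by the $\cP(g)$ together with the scalars needed to close it up, and $N^*$ is the group of scalars occurring; one checks $N^*$ is cyclic (it is finite, as a subgroup of $\CC^\times$ generated by finitely many roots of unity) and $N^*\sbs\bZ(G^*)$.

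The new ingredient is coprimality, which I would exploit exactly as in Glauberman's lemma and the coprime theory of character triples (compare \cite[Theorem 13.32]{Isa} and \cite[Section 2]{NavarroSpaeth1}): because $(|G|,|A|)=1$ and $\theta$ is $A$-invariant, the projective representation $\cP$ can be chosen \emph{$A$-stable} in the strong sense that $\cP^a$ is similar to $\cP$ by a matrix which can be taken to depend on $a$ in a way compatible with the group law on $A$, or more cleanly: since $\theta$ extends (uniquely, coprimely) to the canonical extension whenever $G/N$ is cyclic, the Dade--Isaacs numbers $\langle\langle\,,\rangle\rangle_\theta$ used earlier are $A$-invariant, and so the factor set $\alpha$ is $A$-invariant. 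Then $A$ acts on $G/N$ and fixes $\alpha\in H^2(G/N,\CC^\times)$, so $A$ acts on the central extension $G^*$ of $G/N$ by $N^*$ determined by $(\alpha,\text{action on }N)$, fixing $N^*$ pointwise and fixing $\theta^*$ (a faithful character of the cyclic $p'$- or full group $N^*$, unique of its order, hence automatically $A$-invariant). Spelling this out: one picks an $A$-invariant set-theoretic section $G/N\to G^*$ (possible since, replacing an arbitrary section's cocycle of the $A$-action by an average over $A$, coprimality lets us trivialize it — this is the standard averaging trick, valid because $|A|$ is invertible), and then defines the $A$-action on $G^*$ on this section and extends by $N^*$. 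Finally $\iota:G\to G^*$ is the composite $G\to G/N\hookrightarrow G^*/N^*$ lifted appropriately (it is only a map on the level needed for character triple isomorphisms — really $(\iota,\sigma)$ consists of group isomorphisms $G/N\to G^*/N^*$ plus compatible character bijections $\Irr(\hat G|\theta)\to\Irr(\hat{G^*}|\theta^*)$ for each intermediate $\hat G$), and $A$-equivariance of $\iota$ and of $\sigma_G$ follows by construction since every choice was made $A$-invariantly.

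For part (b) I would run the identical argument with modular character triples in place of ordinary ones, using the modular projective representation associated with $\phi\in\IBr_A(N)$ and the Brauer-character analogue of Gallagher's theorem; the only additional point is that $N^*$ must be a $p'$-group, which is automatic because the group of scalars arising from a modular projective representation over a field of characteristic $p$ (or from the values of the associated $2$-cocycle with values in $\o{\FF}_p^\times$) is a $p'$-group, being a finite subgroup of the multiplicative group of a field of characteristic $p$. The coprime averaging over $A$ goes through verbatim. The main obstacle I anticipate is purely bookkeeping: making precise the sense in which "every choice can be made $A$-invariant" so that the two group isomorphisms and the family of character bijections constituting $(\iota,\sigma)$ are simultaneously $A$-equivariant — in particular, verifying that the averaging trick used to pick an $A$-invariant section does not conflict with the requirement that $\theta^*$ (and the cocycle) be literally fixed rather than merely fixed up to coboundary. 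This is resolved by choosing, from the outset, a representative cocycle that is genuinely $A$-fixed (possible: average the cohomology class's representative, legitimate by coprimality) and only then building $G^*$, so that $A$ acts on $G^*$ on the nose.
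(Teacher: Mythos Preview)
Your approach differs from the paper's, and the key ``averaging'' step has a gap as written.

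The paper does not average cocycles. Instead it works from the start inside the semidirect product $X=GA$. Since $(|G|,|A|)=1$ and $\theta$ is $A$-invariant, $\theta$ extends to $NA$ by \cite[Cor.~(6.28)]{Isa}. One then chooses a projective representation $\cP$ of $GA$ (not merely of $G$) whose restriction $\cP_{NA}$ affords this extension, so that $\cP_{NA}$ is an honest representation and the factor set $\alpha$ is trivial on $NA\times NA$. Building the central extension $\w X$ of $X$ by the cyclic group $E$ generated by the values of $\alpha$ exactly as in \cite[Thm.~(8.28)]{Navarro}, the subset $\{(a,1)\mid a\in A\}\subseteq\w X$ is then a genuine subgroup isomorphic to $A$, precisely because $\alpha$ vanishes on $A\times A$. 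This copy of $A$ normalizes $\w G$ and $\w N$, and the standard chain of character-triple isomorphisms $(G,N,\theta)\to(\w G,\w N,\w\theta)\to(\w G/N_0,\w N/N_0,\lambda)$ is automatically $A$-equivariant because the lifted character $\tau_{\w G}$ is $A$-invariant by construction. No cohomological averaging is needed; coprimality enters only through the single extension of $\theta$ to $NA$.

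Your argument, by contrast, asserts that one can ``average the cohomology class's representative'' to obtain a genuinely $A$-fixed cocycle, ``valid because $|A|$ is invertible''. Invertible where? In $\CC^\times$ one cannot average multiplicatively in a canonical way (taking $|A|$-th roots involves choices), and the hypothesis $(|A|,|G|)=1$ says nothing directly about $\CC^\times$-valued cochains. What is actually needed is the vanishing of an obstruction in $H^1(A,B^2(G/N,E))$ for a finite coefficient group $E$ of exponent dividing $|G/N|$; coprimality then does give this vanishing, and an $A$-fixed cocycle exists. This can be made rigorous, but it is exactly the bookkeeping you yourself flag as the ``main obstacle'' and have not carried out. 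Relatedly, your claim that $A$-invariance of the Dade--Isaacs numbers $\langle\langle\,,\,\rangle\rangle_\theta$ forces the \emph{cocycle} $\alpha$ to be $A$-invariant is incorrect: it only forces the cohomology class $[\alpha]$ to be $A$-invariant, which is where the real work begins. The paper's device of extending $\theta$ to $NA$ and thereby splitting $A$ inside $\w X$ replaces all of this with a one-line observation, and the same trick handles part~(b) verbatim.
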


\begin{proof}
Since the proof of both statements is based on the same ideas,
we give here only the proof of (a).

First note that $\theta$ extends to $NA$ because of  \cite[Cor.~(6.28)]{Isa}.
By the assumptions it is clear that $(GA,N,\theta)$ is a character triple.
Let $\cP$ be a projective representation of $GA$ with the following
properties:
\begin{enumerate}[\rm (i)]
\item $\cP_{NA}$ affords an extension of $\theta$ to $NA$,
\item the values of the associated factor set
  $\alpha: GA\times GA\rightarrow \CC$ are roots of unity and
\item $\alpha$ is constant on $N \times N$-cosets.
\end{enumerate}
Let $X:=GA$ and $E$ be the finite cyclic group generated by the values of
$\alpha$. For the construction of $ (G^*,N^*,\theta^*)$ and the $A$-action
on $G^*$ we follow the proof of Theorem~(8.28) in \cite{Navarro}.
Let $\w X$ be constructed as there using $\cP$:
the group $\w X$ consists of pairs $(x,\epsilon)$
with $x\in X$ and $\epsilon \in E$ and multiplication in $\w X$ is given by
\[(x_1,\epsilon_1)(x_2,\epsilon_2)
  =(x_1x_2,\alpha(x_1,x_2)\epsilon_1\epsilon_2).\]
The projective representation $\cP$ lifts to a representation of $\w X$.
Let $\tau$ be the character afforded by that representation.
The groups $\w N:=\{(n,\epsilon)\mid  n \in N\}$ and
$\w G:=\{(g,\epsilon)\mid  g \in G\}$ are normal in $\w X$. By the properties
of $\cP_{NA}$ the set $\{(a,1)\mid a \in A\}$ forms a group isomorphic to
$A$. Via this identification $A$ acts on $\w G$ and $\w N$.
Let $E_0:=1\times E$ and $N_0:=N\times 1$. Identifying $N$ and $N_0$ we set
$\w\theta=\theta\times 1_E$. This character is $A$-invariant.

Via the epimorphism $\iota_0: \w X\rightarrow X$, $(x,\epsilon)\mapsto x$, the
character triples $(X,N,\theta)$ and $(\w X, N\times E,\w\theta)$ are
isomorphic.

The map $\w \lambda \in\Irr(\w N)$ with $\w \lambda (n,\epsilon)=\epsilon^{-1}$
is a linear character with kernel $N_0$. By the construction of $\w X$ the
character $\w \lambda$ is $\w X$-invariant. We see that
$\tau_{\w N}=\w \lambda^{-1} \w\theta$.

Now one can argue that $(\w X,\w N,\w\lambda)$ and $(\w X,\w N,\w \theta)$ are
isomorphic character triples. Analogously  $(\w X,\w N,\w\lambda)$ and
$(\w X/N_0,\w N/N_0, \lambda)$ are isomorphic character triples, where
$\lambda\in\Irr(\w N/N_0)$ is the character induced by $\w \lambda$.

With $G^*:=\w G/N_0$, $N^*:=\w N/N_0$ and $\theta^*:=\lambda$ we obtain the
required isomorphism
$$(\iota,\sigma): (G,N,\theta)\longrightarrow (G^*,N^*,\theta^*).$$
Let $\iota_2: \w X \rightarrow \w X/N_0$ be the canonical epimorphism.
Because of $\ker{\iota_1}=E$ and $\ker{\iota_2}=N_0$ the isomorphism $\iota$
can be constructed from $\iota_1$ and $\iota_2$.

For $\chi\in\Irr(G|\theta)$ the character $\sigma_G(\chi)$ is obtained in the
following way: the character $\chi$ lifts to some $\tau_{\w G} \mu$ for some
$\mu\in\Irr(\w G)$ with $\ker\mu\geq N$ and $\mu\in\Irr(\w G|\w \lambda)$.
Hence $\mu \circ \iota_2^{-1}$ is a character of $\Irr(\w G/N_0| \lambda)$.
By its construction $\theta^*$ is $A$-invariant and the maps $\iota$ and
$\sigma_G$ are $A$-equivariant, since $\tau_{\w G}$ is $A$-invariant.
\end{proof}

In order to include blocks in the above result, additional assumptions are
required.

\begin{prop}   \label{prop:Aisom_char_trip}
 Let $N\nor G$ and $b\in\Bl(N)$. Suppose that $A$ acts on $G$ with
 $(|G|,|A|)=1$, such that $N$ and $b$ are $A$-stable. Assume $G[b]=G$.
 \begin{enumerate}[\rm (a)]
  \item Let $\theta\in\Irr_A(N)\cap \Irr(b)$. Let  $(G^*,N^*,\theta^*)$
   and $(\iota,\sigma): (G,N,\theta)\longrightarrow(G^*,N^*,\theta^*)$ be
   as in Proposition~\ref{prop4_1}(a). Then two characters
   $\chi_1,\chi_2\in\Irr(G|\theta)$ satisfy $\bl(\chi_1)=\bl(\chi_2)$ if and
   only if $\bl(\sigma_G(\chi_1))=\bl(\sigma_G(\chi_2))$
  \item Let $\theta\in\IBr_A(N)\cap \IBr(b)$. Let  $(G^*,N^*,\theta^*)$ and
   $(\iota,\sigma): (G,N,\theta)\longrightarrow(G^*,N^*,\theta^*)$ be as in
   Proposition~\ref{prop4_1}(b). Then two characters
   $\phi_1,\phi_2\in\IBr(G|\theta)$ satisfy $\bl(\phi_1)=\bl(\phi_2)$ if and
   only if $\bl(\sigma_G(\phi_1))=\bl(\sigma_G(\phi_2))$.
 \end{enumerate}
\end{prop}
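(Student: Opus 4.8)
The plan is to reduce statement (a) to a situation where the block bijection of Theorem~\ref{thm_block_bijection} applies, and then track how the construction of $\sigma_G$ in Proposition~\ref{prop4_1}(a) interacts with the blocks of the groups involved. Concretely, I would set up the three character triples from the proof of Proposition~\ref{prop4_1}, namely $(X,N,\theta)$ with $X=GA$, the central extension $(\w X,\w N,\w\theta)$, and the quotient $(\w X/N_0,\w N/N_0,\lambda)$ with $\lambda=\theta^*$, and then restrict attention to the relevant subgroups $\w G$ and $\w G/N_0=G^*$. Since these are isomorphisms of character triples, there are accompanying bijections $\Irr(G|\theta)\to\Irr(\w G|\w\theta)\to\Irr(G^*|\theta^*)$, and the whole point is to show that each of these bijections is ``block-preserving'' in the weak sense that two characters share a block on one side exactly when their images share a block on the other.

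The key steps, in order, would be: (1) Reduce from $G$ to $\w G$ via the central extension $\iota_1:\w G\to G$ with kernel $E_0\cong E$ a $p'$-group-or-not --- here I must be slightly careful, since $E$ need not be a $p'$-group, but $\ker\iota_1=E_0$ is central and lifting/domination of blocks along central subgroups is controlled by \cite[(9.9),(9.10)]{Navarro}; so $\bl(\chi_1)=\bl(\chi_2)$ iff the lifts $\bl(\tau_{\w G}\mu_1)=\bl(\tau_{\w G}\mu_2)$. (2) Reduce from $\w G$ to $\w G/N_0=G^*$ via the quotient map $\iota_2$ with kernel $N_0=N\times 1$: since $N_0$ is normal in $\w G$ with $N_0\sbs\w N$, domination of blocks \cite[Thm.~(9.10)]{Navarro} gives that $\bl(\mu\circ\iota_2^{-1})$ on $G^*$ determines and is determined by the block of the corresponding character of $\w G$ lying over $\w\lambda$ --- but to run this cleanly I need $N_0$ to be a $p'$-group, or more precisely I need the block of $\w\lambda$ (resp.\ its restriction) to have trivial defect so that domination is a bijection on the blocks lying over it; this is exactly where the hypothesis $G[b]=G$ must be used. (3) Transport the block structure through $\w X$ rather than $\w G$: the character $\tau$ affords an extension of $\w\theta$ to $\w X$, hence of $\theta$ to $GA$, and because $G[b]=G$ one checks $(GA)[b^{GA}]=GA$ (the action of $A$ is coprime, so $A$ centralizes a defect group and its canonical character, whence $A\sbs G[b]^{GA}$-type reasoning applies), so Theorem~\ref{thm_block_bijection} yields that $\bl(\o\eta)\mapsto\bl(\w\theta\eta)$ is a \emph{bijection} $\Bl(GA/N)\to\Bl(GA|b)$; restricting along $G\nor GA$ and using that $G^*A^*$ plays the analogous role on the starred side, the two bijections are identified via $G/N\cong\w G/\w N\cong G^*/N^*$, and block equality is preserved because it is detected on the quotient $G/N$. (4) Finally, since $\tau_{\w G}$ and $\theta^*$ are $A$-invariant and the identifications of quotients are $A$-equivariant (established already in Proposition~\ref{prop4_1}), the $A$-action causes no extra trouble; it is along for the ride. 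Part (b) is the same argument with $\IBr$ in place of $\Irr$, using the modular version of Gallagher's theorem and the fact (used already in the proof of Theorem~\ref{thm_IBr_simple}) that Brauer characters in a block are integral combinations of ordinary ones restricted to $p$-regular classes, so block membership for Brauer characters is governed by the same central-character data.

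The main obstacle I anticipate is Step (2)/(3): making precise the statement ``the character-triple isomorphism $\sigma_G$ sends $\bl(\chi)$ to a block determined by $\bl(\chi)$ via domination and the Gallagher correspondence,'' and in particular pinning down why $G[b]=G$ forces the relevant auxiliary blocks (the block of $\w\lambda$ on $\w N/N_0$, and the blocks of the intermediate groups) to have defect zero or full defect as needed for Theorem~\ref{thm_block_bijection} to give a bijection rather than merely a surjection. The cleanest route is probably not to track blocks character-by-character but to observe that both $\Irr(G|\theta)$ and $\Irr(G^*|\theta^*)$ are in Gallagher bijection with $\Irr(G/N)\cong\Irr(G^*/N^*)$ via $\eta\mapsto\w\theta\eta$ and $\eta^*\mapsto\theta^*\eta^*$, that these Gallagher bijections are compatible with $\sigma_G$ by construction (this is essentially the content of Proposition~\ref{prop4_1}), and that under the hypothesis $G[b]=G$ both $\bl(\w\theta\eta)$ and $\bl(\theta^*\eta^*)$ depend only on $\bl(\o\eta)\in\Bl(G/N)$ --- the first by Theorem~\ref{thm_block_bijection} applied to $(G,N,b,\w\theta)$, the second by Theorem~\ref{thm_block_bijection} applied to $(G^*,N^*,b^*,\theta^*)$, once one verifies $G^*[b^*]=G^*$ (which follows from $G[b]=G$ since $G^*$ is a central quotient of a central extension and $G[b]$ behaves well under such operations, cf.\ Corollary~\ref{Dade_cor}). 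That reduces the whole proposition to: the two block-bijections $\Bl(G/N)\to\Bl(G|b)$ and $\Bl(G^*/N^*)\to\Bl(G^*|b^*)$ fit into a commuting square with $\sigma_G$, which is immediate from the fact that $\sigma_G$ is induced by the explicit maps $\iota_1,\iota_2$ on $\w G$ and these respect the relevant block dominations.
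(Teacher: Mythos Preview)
Your Steps~(1) and~(2) are on the right track and match the paper's argument: lift to $\w G$ via the central subgroup $E_0$ using \cite[(9.9),(9.10)]{Navarro}, then pass to $\w G/N_0=G^*$ using Theorem~\ref{thm_block_bijection}. However, your execution of the second step has gaps in both variants you propose.

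In your ``cleanest route'' you write that $\Irr(G|\theta)$ is in Gallagher bijection with $\Irr(G/N)$ via $\eta\mapsto\w\theta\eta$. This presupposes an extension $\w\theta\in\Irr(G)$ of $\theta$, which is \emph{not} part of the hypotheses and need not exist. The entire point of passing through $\w G$ is precisely that there an extension is available for free: the construction in Proposition~\ref{prop4_1} gives $\tau_{N_0}=\theta$ (since $\cP|_N$ is an ordinary representation affording $\theta$), so $\tau_{\w G}$ extends $\theta$ regarded as a character of $N_0\cong N$. Once you see this, Theorem~\ref{thm_block_bijection} applies directly to $N_0\nor\w G$ with block $b$ and extension $\tau_{\w G}$, after checking $\w G[b]=\w G$; the latter comes from $G[b]=G$ via Corollary~\ref{Dade_cor} applied with $Z=E_0$ (note $N_0\cap E_0=1$). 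The bijection $\Bl(\w G/N_0)\to\Bl(\w G|b)$, $\bl(\o\mu)\mapsto\bl(\tau_{\w G}\mu)$, then says exactly that $\bl(\tau_{\w G}\mu_1)=\bl(\tau_{\w G}\mu_2)$ if and only if $\bl(\sigma_G(\chi_1))=\bl(\sigma_G(\chi_2))$ in $G^*$, and combined with Step~(1) you are done.

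Two smaller points: in Step~(3) you assert that $\tau$ extends $\w\theta$ to $\w X$, but in fact $\tau_{\w N}=\w\lambda^{-1}\w\theta$, not $\w\theta$; the correct statement is $\tau_{N_0}=\theta$, as above. And the detour through $GA$, together with the verification that $(GA)[b]=GA$, is unnecessary: the $A$-action plays no role in the block comparison, which is a statement purely about $G$ and $G^*$. The paper's proof is just your Step~(1) followed by a single application of Theorem~\ref{thm_block_bijection} inside $\w G$ as described.
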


\begin{proof}
We continue using the notation introduced in the proof of
Proposition~\ref{prop4_1}.
Let $N_0:=N\times 1\sbs \w X$ and $\theta_0:=\theta\times 1_E\in\Irr(\w N)$.
Let $b_0:=\bl(\theta_0)$. From Corollary~\ref{Dade_cor} we see that
$G[b]= \w G[ b_0]/E_0$, where $b_0:=\bl( \theta_0)$.

For the proof of (a) let $\mu_1$ and $\mu_2 \in\Irr(\w G)$ with
$N_0\subseteq \ker {\mu_1}$ and $N_0\subseteq \ker{\mu_2}$ such that
$\tau_{\w G}\mu_1$ is a lift of $\chi_1$ and $\tau_{\w G} \mu_2$ is a lift
of $\chi_2$. Note that $E_0\subseteq \bZ (\w G)$. According to \cite[(9.9) and
(9.10)]{Navarro} $\bl(\chi_1)=\bl(\chi_2)$ if and only if $\tau_{\w G} \mu_1$
and $\tau_{\w G} \mu_2$ belong to the same block. According to
Theorem~\ref{thm_block_bijection} we see that the characters of $\w G/N_0$
induced by $\mu_1$ and $\mu_2$ are in the same block. This proves the statement.
\end{proof}

As a corollary that might be of independent interest we conclude the following.

\begin{cor}   \label{cor:isom_char_trip}
 Let $N\nor G$ and $b\in\Bl(N)$. Assume there exists some $G$-invariant
 $\theta\in\Irr_A(N)\cap \Irr(b)$ or $\theta\in\IBr_A(N)\cap \IBr(b)$.
 Let $(G^*,N^*,\theta^*)$ and
 $(\iota,\sigma):(G,N,\theta)\longrightarrow(G^*,N^*,\theta^*)$ be as in
 Proposition~\ref{prop4_1}. Let $H:=G[b]$ and $H^*$ the group with
 $\iota(H/N)=H^*/N^*$.
 \begin{enumerate}[{\rm (a)}]
  \item Then $\Bl(G|b) $ is in bijection with $\Bl(H^*| b^*)$, where
   $b^*:=\bl(\theta^*)$.
  \item Two characters $\chi_1,\chi_2\in\Irr(G|\theta)$ satisfy
   $\bl(\chi_1)=\bl(\chi_2)$ if and only if $\bl(\sigma_G(\chi_1))$ and
   $\bl(\sigma_G(\chi_2))$ cover the same block of $H^*$.
  \item Two characters  $\phi_1,\phi_2\in\IBr(G|\theta)$ satisfy
   $\bl(\phi_1)=\bl(\phi_2)$ if and only if $\bl(\sigma_G(\phi_1))$ and
   $\bl(\sigma_G(\phi_2))$ cover the same block of $H^*$.
 \end{enumerate}
\end{cor}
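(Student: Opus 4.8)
The plan is to reduce Corollary~\ref{cor:isom_char_trip} to
Theorem~\ref{thm_block_bijection} and Proposition~\ref{prop:Aisom_char_trip}
by first passing to the case $G[b]=G$, which is precisely the situation those
results cover. First I would observe that, by the Harris--Kn\"orr
correspondence together with Theorem~\ref{Dade}(b), blocks of $G$ covering $b$
are in bijection with blocks of $H=G[b]$ covering $b$: every block $B$ of $G$
covering $b$ covers a unique block $B'$ of $H$, and $B$ is in turn the unique
block of $G$ covering $B'$; conversely every block of $H$ covering $b$ is
covered by some block of $G$. Transporting this through the character triple
isomorphism $(\iota,\sigma)$, which by construction respects the normal
subgroup lattice between $N$ and $G$ and hence sends $H=G[b]$ to $H^*$ with
$\iota(H/N)=H^*/N^*$, I would match $\Bl(H|b)$ with $\Bl(H^*|b^*)$. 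The point
is that inside $H$ we do have $H[b]=H$ (this is a standard property of Dade's
group: $G[b]$ is the largest subgroup with this stabilizing property, so it
equals its own Dade subgroup relative to $N$ and $b$), and $N^*\sbs\bZ(H^*)$,
so $b^*$ consists of a single character and $\Bl(H^*|b^*)$ is literally
$\Bl(H^*|\theta^*)$ as governed by Proposition~\ref{prop:Aisom_char_trip}
applied with $H$ in place of $G$. Concatenating the two bijections
$\Bl(G|b)\leftrightarrow\Bl(H|b)\leftrightarrow\Bl(H^*|b^*)$ gives part~(a).

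For parts~(b) and~(c) I would track an individual character rather than a
block. Given $\chi\in\Irr(G|\theta)$, its block $\bl(\chi)$ covers a unique
block $\bl_H(\chi)\in\Bl(H|b)$, and by the displayed bijection of part~(a) this
corresponds to a unique block of $H^*$; on the other hand $\bl(\sigma_G(\chi))$
is a block of $G^*$ covering $b^*$, hence covers a unique block of $H^*$. I
would then check that these two blocks of $H^*$ agree, i.e. that
$\chi\mapsto\bl_H(\chi)$ is compatible with $\sigma$ followed by ``covered
block in $H^*$''. Concretely, restricting $\chi$ to $H$ and using Clifford
theory over $\theta$, every constituent of $\chi_H$ lies in the same block
$\bl_H(\chi)$ (again by Theorem~\ref{Dade}(b), which forces uniqueness);
applying $\sigma$ at the level of $H$, which is compatible with $\sigma_G$
since $(\iota,\sigma)$ restricts to a character triple isomorphism
$(H,N,\theta)\to(H^*,N^*,\theta^*)$, and invoking the $H$-version of
Proposition~\ref{prop:Aisom_char_trip}(a) (resp.~(b) in the modular case),
the block in $H^*$ determined by $\bl_H(\chi)$ is exactly the block of $H^*$
covered by $\bl(\sigma_G(\chi))$. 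Hence $\bl(\chi_1)=\bl(\chi_2)$
$\Longleftrightarrow$ $\bl_H(\chi_1)=\bl_H(\chi_2)$ $\Longleftrightarrow$
$\bl(\sigma_G(\chi_1))$ and $\bl(\sigma_G(\chi_2))$ cover the same block of
$H^*$. The modular statement~(c) is proven verbatim with $\IBr$ in place of
$\Irr$, using part~(b) of Propositions~\ref{prop4_1} and
\ref{prop:Aisom_char_trip}.

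The main obstacle I anticipate is the bookkeeping needed to make ``the
character triple isomorphism restricts to $H$'' and ``$\sigma_G$ agrees with
$\sigma_H$ on characters lying over $\theta$'' precise, and in particular
verifying that $\iota$ really carries $H/N$ onto a subgroup $H^*/N^*$ of
$G^*/N^*$ with $H^*[b^*]=H^*$ — this uses both Corollary~\ref{Dade_cor} (to
identify Dade's group after the central quotient by $N_0$) and the fact that
$G[b]=G$ is equivalent to a stabilizer condition that is manifestly preserved
by a character triple isomorphism. Once these identifications are in place, the
two bijections in part~(a) and the block-tracking in parts~(b) and~(c) are
formal consequences of Theorem~\ref{thm_block_bijection},
Proposition~\ref{prop:Aisom_char_trip}, and the Harris--Kn\"orr correspondence,
with no further computation required.
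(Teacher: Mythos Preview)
Your approach is essentially the same as the paper's, only spelled out in much greater detail: the paper's proof records merely that part~(a) follows directly from Proposition~\ref{prop:Aisom_char_trip} and that parts~(b) and~(c) follow from Proposition~\ref{prop:Aisom_char_trip} together with Theorem~\ref{Dade}(b). Your plan of applying Proposition~\ref{prop:Aisom_char_trip} at the level of $H=G[b]$ (where indeed $H[b]=H$, so the hypothesis is met) and then linking $\Bl(G|b)$ back to $\Bl(H|b)$ via Theorem~\ref{Dade}(b) is exactly the intended argument, and your character-by-character tracking in~(b) and~(c) is the natural unpacking of the paper's one-line justification. One small quibble: the Harris--Kn\"orr correspondence relates $\Bl(G|b)$ to blocks of $\norm GD$, not to $\Bl(G[b]|b)$; the bijection $\Bl(G|b)\leftrightarrow\Bl(H|b)$ you need here comes from Theorem~\ref{Dade}(b) together with standard block covering, and Harris--Kn\"orr plays no role.
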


\begin{proof}
Part~(a) follows directly from Proposition~\ref{prop:Aisom_char_trip}.
Parts~(b) and~(c) are applications of Proposition~\ref{prop:Aisom_char_trip}
together with Theorem~\ref{Dade}(b).
\end{proof}

\section{Reduction}

In this section we show how Theorem~\ref{main} is implied by the analogous
statement for the central product of quasi-simple groups given in
Corollary~\ref{cor_dir_prod}. In fact, we will work with the following
slightly more general statement.

\begin{thm}  \label{thmA}
 Let the group $A$ act on the group $G$ with $(|A|,|G|)=1$.
 \begin{enumerate}[\rm(a)]
  \item  Let  $Z$ be an $A$-invariant central $p$-subgroup of $G$, let
   $\nu\in\Irr_A(Z)$ and let $B\in\Bl_A(G)$. Then there exists an
   $A$-invariant character $\chi\in\Irr(B|  \nu)$.
  \item Let $B\in\Bl_A(G)$. Then there exists an $A$-invariant character
   $\phi\in\IBr(B)$.
 \end{enumerate}
\end{thm}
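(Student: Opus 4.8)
The plan is to prove Theorem~\ref{thmA} by induction on $|G|$, reducing the general case to the situation of Corollary~\ref{cor_dir_prod} via a standard but careful analysis of a minimal $A$-invariant normal subgroup. Since (a) and (b) are parallel, I would carry out (a) in detail and indicate the modifications for (b). The base case $G=1$ being trivial, assume $|G|>1$ and pick a minimal $A$-invariant normal subgroup $M$ of $G$; since the action is coprime, $M$ is either an elementary abelian $q$-group or a direct product of isomorphic non-abelian simple groups transitively permuted by $A$.

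\textbf{The quasi-simple/central-product case.} First handle the situation where $G/\bZ(G)$ is a direct product of isomorphic non-abelian simple groups transitively permuted by $A$. If $Z$ is already inside $\bZ(G)$ we are in the setting of Corollary~\ref{cor_dir_prod}, possibly after enlarging: one reduces to $G$ perfect as in the proof there (writing $G=[G,G]\bZ(G)$ and using the $\chi_0\cdot\nu$ construction of \cite[Section~5]{IMN}), and then Corollary~\ref{cor_dir_prod}(a) supplies the $A$-invariant $\chi\in\Irr(B\mid\nu)$; part (b) is Corollary~\ref{cor_dir_prod}(b). This is the genuine input — everything else is a reduction to it.

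\textbf{The reduction via a minimal normal subgroup.} In general let $N$ be a minimal $A$-invariant normal subgroup, and let $b\in\Bl(N)$ be an $A$-invariant block covered by $B$; such a $b$ exists because $A$ acts coprimely on the $G$-orbit of blocks of $N$ covered by $B$ and hence fixes one, which after replacing it by a $G$-conjugate we may take $G$-invariant as well (Glauberman-type argument on the coprime $A$-action). Let $\theta\in\Irr_A(N)\cap\Irr(b)$ be an $A$-invariant character in $b$: for $N$ abelian this is immediate, and for $N$ a central product of quasi-simples it is exactly the content of the case just treated applied to the proper (or equal) situation $N\lhd GA$. Now pass to the Dade group $G[b]$: by Theorem~\ref{Dade} and Corollary~\ref{cor:isom_char_trip}, $\Bl(G\mid b)$ is in bijection with $\Bl(H^*\mid b^*)$ where $H^*$ is the $A$-stable image of $G[b]$ under the character triple isomorphism $(\iota,\sigma)$ of Proposition~\ref{prop4_1}, and crucially this isomorphism is $A$-equivariant with $N^*\sbs\bZ(G^*)$ linear (and a $p'$-group in case (b)). If $G[b]<G$, then $|H^*|<|G^*|$ and one applies the inductive hypothesis inside $H^*$ with central subgroup $N^*Z^*$ (where $Z^*=\iota(ZN/N)$, enlarged appropriately to account for $\nu$) to get an $A$-invariant character in $b^*$ lying over the prescribed central character; transporting back through $\sigma^{-1}$ and using Corollary~\ref{cor:isom_char_trip}(b),(c) to control which block of $G$ it lands in, together with the fact that $\sigma_G$ is $A$-equivariant, yields the $A$-invariant $\chi\in\Irr(B\mid\nu)$. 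If instead $G[b]=G$, then Theorem~\ref{thm_block_bijection} gives a block bijection $\Bl(G/N)\to\Bl(G\mid b)$, and combined with the $A$-equivariant character-triple isomorphism we reduce to finding an $A$-invariant character in the appropriate block of the quotient $G^*/N^*$, which has strictly smaller order, closing the induction; the central character condition $\nu$ is matched up by choosing $\theta$ to lie over $\nu_{Z\cap N}$ and using the Gallagher-type product (Lemma~\ref{lem2_1_surj_map} and the $\chi_0\cdot\nu$ construction) to adjust the constituent over the complement.

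\textbf{Main obstacle.} The delicate point is not the existence of $A$-invariant objects at each stage — that follows from coprimeness and the quasi-simple case — but rather the \emph{bookkeeping of block membership and central characters simultaneously through the character triple isomorphism}: one must ensure that the $A$-invariant character produced upstairs in $H^*$ or $G^*/N^*$ transports to a character of $G$ lying in the specified block $B$ and over the specified $\nu$, and here the compatibility statements in Corollary~\ref{cor:isom_char_trip}(b),(c) (which encode how blocks of $G$ correspond to blocks of $H^*$ covering $b^*$) are exactly what is needed, provided $G[b]=G$ can be arranged or handled separately as above. For part (b) the same scheme applies verbatim, replacing $\Irr$ by $\IBr$, Proposition~\ref{prop4_1}(a) by (b), and using that in the modular character triple $N^*$ is a central $p'$-group so there is no central-character constraint to track; the quasi-simple input is Corollary~\ref{cor_dir_prod}(b).
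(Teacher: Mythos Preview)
Your overall strategy --- reduce to Corollary~\ref{cor_dir_prod} via Dade's ramification group and the $A$-equivariant character triple isomorphisms of Section~\ref{sec_char_trip} --- is the paper's strategy. But there are two genuine gaps, plus a structural choice that makes things harder than necessary.

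First, the assertion that the $A$-fixed block $b\in\Bl(N)$ covered by $B$ can be taken $G$-invariant ``after replacing it by a $G$-conjugate'' is false: the blocks of $N$ covered by $B$ form a single $G$-orbit, and Glauberman only yields an $A$-fixed point in that orbit; conjugating cannot make it $G$-invariant unless the orbit is a singleton. One must instead reduce to $G_b=G$ via Fong--Reynolds, as the paper does explicitly. Without this, the character triple $(G,N,\theta)$ you feed into Proposition~\ref{prop:Aisom_char_trip} and Corollary~\ref{cor:isom_char_trip} is not even defined, since those results require $\theta$ to be $G$-invariant. Second, your induction on $|G|$ does not close in the case $G[b]<G$: you apply the hypothesis to $H^*\leq G^*$ and write $|H^*|<|G^*|$, but what you need is $|H^*|<|G|$, and the passage $G\rightsquigarrow G^*$ adjoins a central cyclic group and can increase order. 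The paper instead applies induction directly to $G[b]$ when $G[b]<G$ (using Theorem~\ref{Dade}(b) together with Theorems~\ref{thm_induced_A_fixed_const} and~\ref{thm_induced_A_fixed_const_IBr} to push back up to $G$), and it inducts primarily on $|G/\bZ(G)|$ rather than $|G|$ --- this is the quantity that genuinely drops under the character triple, since $N^*\subseteq\bZ(G^*)$ and $G^*/N^*\cong G/N$.

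Finally, the paper takes $N$ \emph{maximal} (so that $G/N$ is a chief factor of $GA$), not minimal. With $N$ maximal one gets $Z\subseteq\bZ(G)\subseteq N$ automatically, the terminal case $N=\bZ(G)$ is precisely Corollary~\ref{cor_dir_prod}, and one never has to transport $\nu$ across a quotient by a subgroup disjoint from $Z$. Your minimal-$N$ route is not hopeless, but the bookkeeping you yourself flag as ``the delicate point'' (simultaneously matching $B$ and $\nu$ through $\sigma$) is real and not carried out; with the paper's maximal-$N$ setup it largely evaporates.
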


The following well-known result will be used for part (a).

\begin{thm}   \label{thm_induced_A_fixed_const}
 Let the group $A$ act on the group $G$ with $(|A|,|G|)=1$ and let $N\nor G$
 be $A$-stable. Let $\theta\in\Irr_A(N)$. Then there exists an $A$-invariant
 character in $\Irr(G| \theta)$.
\end{thm}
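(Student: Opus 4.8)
The plan is to reduce to the case where $\theta$ is linear and faithful by invoking the $A$-equivariant character triple isomorphism of Proposition~\ref{prop4_1}(a), and then exploit Glauberman-type coprime fixed-point arguments. More precisely, first I would apply Proposition~\ref{prop4_1}(a) to the pair $N\nor G$ and $\theta\in\Irr_A(N)$, obtaining a character triple $(G^*,N^*,\theta^*)$ with $N^*\sbs\bZ(G^*)$, an $A$-action on $G^*$ stabilizing $N^*$ and $\theta^*$, and $A$-equivariant maps $\iota$ and $\sigma_G$. Since $\sigma_G$ is a bijection $\Irr(G|\theta)\to\Irr(G^*|\theta^*)$ that is $A$-equivariant, it suffices to produce an $A$-invariant character in $\Irr(G^*|\theta^*)$. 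Replacing $(G,N,\theta)$ by $(G^*,N^*,\theta^*)$, we may thus assume $N\sbs\bZ(G)$ and $\theta$ is linear.

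With $\theta$ linear and central and $A$-invariant, the second step is to pass to $GA$ and use Gallagher's theorem. Since $N\langle\text{anything}\rangle/N$ behaves well and $\theta$ is $A$-invariant, $\theta$ extends to an $A$-invariant linear character $\w\theta$ of $\bZ(G)$-type; more directly, because $(|A|,|G|)=1$ the character $\theta$ extends to $GA$ by \cite[Cor.~(6.28)]{Isa} (as $N\sbs\bZ(G)$ and $N\langle a\rangle/N$ is cyclic for $a$ generating suitable cyclic subgroups — or simply because $\theta$ is $GA$-invariant and $|GA/N|$ issues can be handled by the coprimeness). Pick an extension $\w\theta\in\Irr(GA)$ and restrict to $G$ to get an extension $\rho\in\Irr(G)$ of $\theta$ that is $A$-invariant. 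Then by Gallagher's theorem the map $\o\eta\mapsto\eta\rho$ is a bijection $\Irr(G/N)\to\Irr(G|\theta)$, and it is $A$-equivariant because $\rho$ is $A$-invariant and the $A$-action commutes with this multiplication. So it now suffices to find an $A$-invariant character in $\Irr(G/N)$, which is trivial — take the trivial character of $G/N$ — yielding the $A$-invariant character $\rho\in\Irr(G|\theta)$ itself.

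Actually the cleanest route avoids even the reduction: the whole statement follows from the classical fact that an $A$-invariant $\theta\in\Irr_A(N)$ under coprime action extends to an $A$-invariant character of its stabilizer and one counts fixed points. The key input is that $\theta$ extends to $NA$ (by \cite[Cor.~(6.28)]{Isa}, since $(|N|,|A|)=1$), say to $\hat\theta\in\Irr(NA)$; restricting a suitable extension and using the Clifford correspondence together with Glauberman's lemma on coprime actions, one checks that the set $\Irr(G|\theta)$ carries an $A$-action whose fixed points are nonempty. Concretely, induce: the character $\theta^G$ need not be irreducible, but decompose it; $A$ permutes $\Irr(G|\theta)$ and, since $\sum_{\chi\in\Irr(G|\theta)}\chi(1)^2$-type counting arguments are $A$-equivariant, the existence of an $A$-fixed constituent follows from the coprime fixed-point principle once we know $A$ fixes $\theta$. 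The standard reference for exactly this is the coprime version of Clifford theory; one shows $\Irr_A(G|\theta)\ne\emptyset$ by reducing via character triples to the case $N$ central as above.

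The main obstacle is making the $A$-equivariance of the Clifford/Gallagher correspondences fully rigorous when $\theta$ is not linear, i.e.\ justifying the reduction to the central-linear case cleanly; this is precisely what Proposition~\ref{prop4_1}(a) was engineered to handle, so the proof should simply cite it, perform the reduction, extend the now-linear $\theta$ to $G$ via \cite[Cor.~(6.28)]{Isa} applied in $GA$, and conclude with Gallagher. I expect the write-up to be short: one paragraph invoking Proposition~\ref{prop4_1}(a) to assume $N\sbs\bZ(G)$ and $\theta$ linear, one sentence extending $\theta$ $A$-invariantly to $G$ using coprimeness, and one sentence observing this extension lies in $\Irr(G|\theta)$ and is $A$-invariant.
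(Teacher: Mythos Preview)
Your plan has a genuine gap at the extension step. After using Proposition~\ref{prop4_1}(a) to reduce to the case $N\sbs\bZ(G)$ with $\theta$ linear, you assert that $\theta$ extends to $G$ (or even to $GA$), citing \cite[Cor.~(6.28)]{Isa} and the coprimeness of $|A|$ and $|G|$. Neither justification works. Corollary~(6.28) of \cite{Isa} requires the quotient to be cyclic, which $GA/N$ (or $G/N$) need not be, and the coprimeness hypothesis $(|A|,|G|)=1$ says nothing about $(|N|,|G/N|)$, which is what governs extendibility from a central subgroup. In fact a linear $G$-invariant character of a central subgroup need not extend at all: take $G=Q_8$, $N=\bZ(G)$, and $\theta$ the nontrivial character of $N$; then $\Irr(G|\theta)$ consists of the single $2$-dimensional character and $\theta$ does not extend to $G$. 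Since character triple isomorphisms preserve extendibility, the reduction via Proposition~\ref{prop4_1}(a) cannot manufacture an extension that did not exist before. So your concluding sentence ``extend the now-linear $\theta$ to $G$ \dots\ and conclude with Gallagher'' simply fails.

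Your third paragraph gestures at the correct idea --- a Glauberman-type fixed-point argument on $\Irr(G|\theta)$ --- but it is not a proof: the phrases ``coprime fixed-point principle'' and ``$\sum\chi(1)^2$-type counting'' are not made precise, and the honest work lies exactly there. The paper itself does not reprove this; it is a classical result, and the paper's proof is the one-line citation of \cite[Thm.~(13.28) and Cor.~(13.30)]{Isa}. Those results establish, via the Glauberman correspondence machinery, that under a coprime action the $A$-fixed characters above an $A$-fixed $\theta$ are in bijection with suitable characters of a smaller group, and in particular the set is nonempty. If you want to supply an independent argument, that is the route to flesh out; the character-triple reduction alone does not finish it.
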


\begin{proof}
This is Theorem (13.28) and Corollary (13.30) of \cite{Isa}.
\end{proof}

For the proof of Theorem~\ref{thmA}(b) we need the following analogue for
Brauer
characters.

\begin{thm}   \label{thm_induced_A_fixed_const_IBr}
 Let the group $A$ act on the group $G$ with $(|A|,|G|)=1$ and let $N\nor G$
 be $A$-stable. Let $\phi\in\IBr_A(N)$. Then there exists an $A$-invariant
 character in $\IBr(G| \phi)$.
\end{thm}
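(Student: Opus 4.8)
The plan is to mirror the ordinary-character statement (Theorem~\ref{thm_induced_A_fixed_const}) but to keep track of Brauer characters instead. The most natural route is to reduce to the situation where $G/N$ is an $r$-group for a single prime $r$, since an $A$-invariant $\phi\in\IBr(N)$ has an $A$-invariant extension (equivalently, induced constituent) to $G$ as soon as this holds for each Sylow subgroup of $G/N$ over $N$: if $\phi_r\in\IBr_A(Q_r)$ lies over $\phi$ for each $A$-invariant preimage $Q_r$ of a Sylow $r$-subgroup of $G/N$, then by coprimeness one assembles an $A$-invariant Brauer character over $\phi$ in $G$ by the standard argument used for ordinary characters (see the proof of \cite[Thm.~(13.28)]{Isa} in the modular setting, or \cite[Thm.~(8.30)]{Navarro}). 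So I would first state this reduction and then assume $G/N$ is an $r$-group.

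With $G/N$ an $r$-group, there are two cases. If $r\ne p$, then $\phi$ extends to $G$: this is the modular analogue of Gallagher extension for coprime-index cyclic-by-cyclic towers, and more precisely one uses that an $A$-invariant $\phi\in\IBr_A(N)$ extends to $N\langle g\rangle$ whenever $g$ is a $p$-regular element with $N\langle g\rangle/N$ an $r$-group (for $r\ne p$ this reduces to the ordinary theory of the cyclic quotient via the theory of Brauer characters of $r$-groups over $N$), and then Isaacs' criterion for extending from $N$ to $G$ via Sylow subgroups of $G/N$ applies. Among the (possibly several) extensions $\wh\phi\in\IBr(G|\phi)$, the group $A$ acts, and since $|A|$ is coprime to $|G/N|$ — which bounds the number of extensions, as extensions form a coset of $\IBr(G/N)$ under Gallagher — $A$ fixes one of them. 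If $r=p$, then $\phi$ as a Brauer character of $N$ has, viewed appropriately, $p$-defect zero relative to the $p$-quotient $G/N$, so there is a \emph{unique} Brauer character in $\IBr(G|\phi)$ (the quotient $G/N$ being a $p$-group contributes no new $p$-modular irreducibles beyond the trivial one); uniqueness forces $A$-invariance. In both cases one produces an $A$-invariant member of $\IBr(G|\phi)$.

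An alternative, and perhaps cleaner, approach avoiding the case split is to invoke the $A$-version of the character-triple machinery already developed: by Proposition~\ref{prop4_1}(b) one finds an $A$-equivariant modular character triple isomorphism $(\iota,\sigma):(G,N,\theta)\to(G^*,N^*,\theta^*)$ with $N^*\sbs\bZ(G^*)$ a $p'$-group, and $\sigma_G$ is $A$-equivariant. Then $\IBr(G|\theta)$ is in $A$-equivariant bijection with $\IBr(G^*|\theta^*)$, so it suffices to find an $A$-invariant member of the latter. But with $N^*$ central of order prime to $p$, a Brauer character over $\theta^*$ is essentially a Brauer character of $G^*/\ker\theta^*$ lying over a fixed \emph{linear} $p'$-central character, and one reduces to $\IBr_A(G^*/N^*)\ne\emptyset$, which is nonempty because the trivial Brauer character of any group is $A$-invariant; twisting that by the fixed central character produces the desired $A$-invariant Brauer character. (This uses that over a \emph{linear} central character extension is essentially unique up to $\IBr$ of the quotient, and coprimeness of $|A|$ with the number of such.)

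I expect the main obstacle to be the extension step when $r=p$: one must argue carefully that $\IBr(G|\phi)$ is a singleton when $G/N$ is a $p$-group, which is the Brauer-character analogue of the fact that over a $p$-group quotient an ordinary character has a unique extension up to characters of $p$-defect zero of the quotient — here there are no nontrivial such, but the precise citation (e.g.\ \cite[Thm.~(8.11)]{Navarro} or the relevant uniqueness result for $\IBr$ over $p$-quotients) needs to be pinned down and combined correctly with the coprime action. Once that is in place, the coprime-fixed-point selection among a set whose size divides $|G/N|$ is routine, and the Sylow-by-Sylow reassembly is exactly parallel to the ordinary case.
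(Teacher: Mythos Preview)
Both approaches contain a genuine gap, and the underlying reason is the same: unlike its ordinary-character analogue, this statement is \emph{not} elementary and ultimately rests on the classification of finite simple groups (via Corollary~\ref{cor_dir_prod}). In your first approach, the ``Sylow-by-Sylow reassembly'' you invoke is a theorem about \emph{extensions} (the modular analogue of \cite[Cor.~(11.31)]{Isa}), not about arbitrary constituents lying over $\phi$; there is no such reassembly for constituents. And even granting the reduction to $G/N$ an $r$-group with $r\ne p$, it is simply false that a $G$-invariant $\phi\in\IBr(N)$ must extend to $G$: take $G/N$ a non-cyclic $r$-group and $\phi$ with nontrivial obstruction. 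In your second approach, after passing to $(G^*,N^*,\theta^*)$ with $N^*$ central and $p'$, you claim one ``reduces to $\IBr_A(G^*/N^*)\ne\emptyset$'' by twisting with the central character. But $\theta^*$, although linear and central, need not extend to $G^*$, so there is no Gallagher bijection $\IBr(G^*|\theta^*)\leftrightarrow\IBr(G^*/N^*)$ to carry the trivial Brauer character back.

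The paper's proof does start along the lines of your second approach: after reducing to $\phi$ being $G$-invariant and $G/N$ a chief factor of $GA$, it applies Proposition~\ref{prop4_1}(b) to reach central $p'$ $N^*$. The abelian chief-factor cases then go through essentially as you say (if $G/N$ is a $q$-group with $q\ne p$ then $G^*$ is a $p'$-group and Theorem~\ref{thm_induced_A_fixed_const} applies to $\Irr=\IBr$; if $G/N$ is a $p$-group then $\IBr(G^*|\theta^*)$ is a singleton). The case your arguments miss is when $G^*/N^*\cong G/N$ is a direct product of non-abelian simple groups permuted transitively by $A$. There one lifts $\theta^*$ to $\nu\in\Irr(N^*)$, uses Theorem~\ref{thm_induced_A_fixed_const} to find an $A$-invariant $\chi\in\Irr(G^*|\nu)$, hence an $A$-invariant block $B$ of $G^*$, and then invokes Corollary~\ref{cor_dir_prod}(b) --- which depends on Theorems~\ref{thmA_simple} and~\ref{thm_IBr_simple} for quasi-simple groups --- to produce an $A$-invariant member of $\IBr(B)\subseteq\IBr(G^*|\theta^*)$. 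Your shortcuts attempt to bypass exactly this step.
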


\begin{proof}
We prove this statement by induction on $|G:N|$ and then on $|G:\bZ(G)|$.
We can assume that $\phi$ is $G$-invariant, since otherwise
some $\phi'\in\IBr_A(G_\phi|\phi)$ exists by induction and hence
$\phi'^G\in\IBr_A(G|\phi)$.

Assume there exists an $A$-stable subgroup $K\nor G$ with $N\lneq K \lneq G$.
Then by induction there exists some $A$-invariant character
$\phi'\in \IBr(K| \phi)$ and one in $\IBr(G| \phi')$. Accordingly we can
assume that $G/N$ is a chief factor of $GA$ and hence $G/N$ is the direct
product of isomorphic simple groups, such that $A$ acts transitively on the
factors of $G/N$.

Then $(G,N,\phi)$ forms a modular character triple that is isomorphic to
some $(G^*,N^*,\phi^*)$ according to Proposition~\ref{prop4_1}, such that
$N^*\sbs \bZ(G^*)$ and $p\nmid |N^*|$. Since the isomorphism of the character
triples is $A$-equivariant it is sufficient to prove the statement for
$(G^*,N^*,\phi^*)$. For this it suffices to prove that there exists some
$A$-invariant character in $\IBr(G^*| \theta^*)$. If $G/N$ is a $q$-group
for $p\neq q$, the group $G^*$ is a $p'$-group and the statement follows
immediately from Theorem \ref{thm_induced_A_fixed_const}.
If $G/N$ is a $p$-group, the set $\IBr(G^*|\phi^*)$ is a singleton.

Let $\nu\in\Irr(N^*)$ be the character with $\nu^0=\phi^*$. (Note that $\nu$
is unique since $N^*$ is a $p'$-group.) Since $\nu$ is $A$-invariant there
exists some $A$-invariant $\chi\in\Irr(G^*|\nu)$ by
Theorem~\ref{thm_induced_A_fixed_const}. This character hence belongs to an
$A$-invariant block $B\in\Bl(G^*)$. Further $G^*/N^*$ is the direct product
of isomorphic non-abelian simple groups that are permuted  transitively by $A$.
According to Corollary~\ref{cor_dir_prod} there exists an $A$-invariant
Brauer character in $\IBr(B)$.
\end{proof}

We start proving  Theorem~\ref{thmA} in a series of intermediate results,
working by induction first on $|G/\bZ(G)|$ and second on $|G|$. It is clear
that we may assume that $Z\in \Syl_p(\zent G)$.
\medskip

Let $N\nor G$ such that $G/N$ is chief factor of $GA$. Then by Glauberman's
Lemma, \cite[Lemma (13.8)]{Isa}, there exists $b\in\Bl_A(N)$ such that
$B\in\Bl(G| b)$. (Recall that by \cite[Cor.~(9.3)]{Navarro}, $G$ acts
transitively on the set of blocks of $N$ covered by $b$.)
Another application of Glauberman's Lemma shows that $b$ has an
$A$-invariant defect group $D$.

Next, notice that $Z \sbs N$ in part (a) of Theorem \ref{thmA}. Otherwise, we
have that $NZ=G$, and therefore $G/N$ is a $p$-group.
In particular $B$ is the only block covering $b$ (\cite[Cor.~(9.6)]{Navarro}).
Let $\nu\in\Irr_A( Z)$. Since $b$ has an $A$-invariant character
$\chi_1 \in \Irr(N|\nu_{Z\cap N})$ by induction,
the character $\chi_1\cdot \nu$ defined as in \cite[Section 5]{IMN} has the
required properties and we are done.

Analogously one can argue that $\zent G \sbs N$ since there exists a unique
$\mu\in \Irr(\zent G|\nu)$ in a block of $\zent G$ covered by $B$.

\begin{lem}
 We can assume that $G_b=G$.
\end{lem}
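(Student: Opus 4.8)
The plan is to reduce the general situation to the case $G[b]=G$, which has already been set up by the machinery of Sections~\ref{sec_Gal_type_thm}--\ref{sec_char_trip}. Recall that we are in the inductive situation where $N\nor G$ with $G/N$ a chief factor of $GA$, $b\in\Bl_A(N)$ with $B\in\Bl(G|b)$, and $b$ has an $A$-invariant defect group $D$; moreover in part~(a) we have arranged $Z\sbs\zent G\sbs N$. First I would note that by Glauberman's lemma (applied to the coprime action of $A$ on $G/N$ permuting the $G$-orbit on blocks of $N$ below $B$, which is a single orbit) the stabilizer $G_b$ is $A$-stable, and of course $N\sbs G_b\nor G$ is then a subgroup lying between $N$ and $G$. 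Since $N\sbs G[b]\nor G$ and $G[b]\sbs G_b$, both $G[b]$ and $G_b$ are $A$-invariant normal subgroups of $G$ containing $N$.

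The key step is to pass to $G_b$ via a Clifford-theoretic correspondence and check it is compatible with everything in sight. Let $\tilde B\in\Bl(G_b|b)$ be the Fong--Reynolds correspondent of $B$, so that induction $\tilde B\mapsto\tilde B^G=B$ is a bijection $\Bl(G_b|b)\to\Bl(G|b)$ commuting with the $A$-action, whence $\tilde B$ is $A$-invariant; simultaneously, induction of characters gives bijections $\Irr(\tilde B|\nu)\to\Irr(B|\nu)$ (for part~(a), using that $Z\sbs N\sbs G_b$ is central in $G$ and $\nu$-lifts behave well) and $\IBr(\tilde B)\to\IBr(B)$ (for part~(b)), again $A$-equivariantly because the Fong--Reynolds bijection is canonical. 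Therefore an $A$-invariant character in $\Irr(\tilde B|\nu)$ (resp.\ $\IBr(\tilde B)$) induces to an $A$-invariant character with the required properties in $B$. So if $G_b\lneq G$ we finish by the inductive hypothesis applied to $(G_b,Z,\nu,\tilde B)$ (resp.\ $(G_b,\tilde B)$), noting that $|G_b/\zent{G_b}|$ or $|G_b|$ has strictly dropped — here one must be slightly careful that the induction parameters genuinely decrease, which holds because $\zent{G_b}\supseteq\zent G$ and $G_b\lneq G$; if $G_b/\zent{G_b}$ did not shrink one falls back on the secondary induction on $|G|$. Thus we may assume $G_b=G$, i.e.\ $b$ is $G$-invariant.

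The main obstacle I anticipate is verifying the $A$-equivariance and block-compatibility of the Fong--Reynolds correspondence simultaneously for ordinary characters over~$\nu$ and for Brauer characters, together with bookkeeping that the chosen defect group, the central subgroup $Z$, and the character $\nu$ all transport correctly; none of this is deep, but it requires care since we are juggling three structures (blocks, $\Irr$ over $\nu$, $\IBr$) at once. Everything else is a routine invocation of standard Clifford theory for blocks as in~\cite[Ch.~9]{Navarro} combined with Glauberman's lemma from~\cite[\S13]{Isa}. With $b$ now $G$-invariant, the way is clear to bring in $G[b]$: the later reductions will replace $(G,N,b)$ by a character-triple-isomorphic situation in which $G[b]=G$, where Theorem~\ref{thm_block_bijection} and Proposition~\ref{prop:Aisom_char_trip} apply and reduce the problem to the quasi-simple case handled by Corollary~\ref{cor_dir_prod}.
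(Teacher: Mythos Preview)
Your approach is the same as the paper's: use the Fong--Reynolds correspondent $\tilde B\in\Bl(G_b|b)$ with $\tilde B^G=B$, observe $\tilde B$ is $A$-invariant by uniqueness, apply the inductive hypothesis to $(G_b,\tilde B)$ (with $Z\sbs\zent G\sbs\zent{G_b}$ for part~(a)), and induce the resulting $A$-invariant character to $G$. Two minor corrections: first, $G_b$ need not be normal in $G$ (the inertia group of a block is just a subgroup), though this is irrelevant to the argument; second, you do not need Glauberman's lemma to see that $G_b$ is $A$-stable---it is the stabilizer of the $A$-invariant object $b$, so this is immediate, as the paper notes. Your discussion of $G[b]$ is extraneous here and belongs to the next reduction.
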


\begin{proof}
By the Fong-Reynolds theorem \cite[Thm.~(9.14)]{Navarro}, there exists a unique
$\w B\in\Bl(G_b| b)$ with $\w B^G=B$. Since $b$ is $A$-invariant, so is
$G_b$. Also, $\w B$ is $A$-invariant by uniqueness. Notice that
$Z \sbs \zent G \sbs G_b$. If $G_b< G$, then by induction $G_b$ has an
$A$-invariant character
$\chi_0\in\Irr(\w B)\cap \Irr_A(G_b| \nu)$ and $\phi_0\in\IBr(\w B)$. Now, the
characters $\chi_0^G$ and $\phi_0^G$ are irreducible, $A$-invariant and belong
to $B$.
\end{proof}

\begin{lem}
 We can assume that $G[b]=G$.
\end{lem}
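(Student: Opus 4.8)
The plan is to reduce, via a Fong--Reynolds type argument combined with the character triple machinery of Section~\ref{sec_char_trip}, to the situation already treated by earlier results, ultimately Corollary~\ref{cor_dir_prod}. By the previous lemma we may assume $G_b=G$, i.e.\ $b\in\Bl_A(N)$ is $G$-invariant. Suppose $G[b]\lneq G$. First I would note that since $b$ is both $A$-invariant and $G$-invariant and since $G[b]$ is uniquely determined by $G$ and $b$ (Theorem~\ref{Dade}), the subgroup $G[b]$ is $A$-stable and normal in $G$, with $N\sbs G[b]\lneq G$.

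Next I would produce, inside $G[b]$, an $A$-invariant block lying over $b$ to which induction applies. Concretely: $G[b]$ acts on the set $\Bl(G[b]\mid b)$ of blocks of $G[b]$ covering $b$, and by Theorem~\ref{Dade}(b) each block $B'\in\Bl(G[b]\mid b)$ is covered by a unique block of $G$; this sets up an $A$-equivariant injection from $\Bl(G[b]\mid b)$ into $\Bl(G)$ whose image contains $B$. Since $B$ is $A$-invariant, the unique $B'\in\Bl(G[b]\mid b)$ with $(B')^{?}$ related to $B$ (more precisely, the unique block $B'$ of $G[b]$ covered by $B$ that lies over $b$) must itself be $A$-invariant. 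Because $G[b]\lneq G$ and $Z\sbs\zent G\sbs G[b]$, induction on $|G|$ (for part (a), keeping track of $\nu\in\Irr_A(Z)$) applies to the triple $(G[b],Z,\nu)$ and the block $B'$: we obtain an $A$-invariant $\chi_0\in\Irr(B'\mid\nu)$, and for part (b) an $A$-invariant $\phi_0\in\IBr(B')$. Inducing, $\chi_0^G$ and $\phi_0^G$ are $A$-invariant; they are irreducible because by Theorem~\ref{Dade}(b) the stabilizer in $G$ of $B'$ is all of $G$ only when $B'$ is $G$-invariant, so instead one argues as in Fong--Reynolds/Harris--Kn\"orr that $\Irr(G\mid\chi_0)$ meets $\Irr(B)$, and more carefully that the Clifford correspondent construction yields an irreducible $A$-invariant constituent in $B$; alternatively one invokes the bijection $\Bl(G[b]\mid b)\to$ (blocks of $G$ over $b$) being a bijection onto $\Bl(G\mid b)$, so that $B$ corresponds to $B'$ and the induced characters land in $B$.

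Let me restate the cleaner route, which I expect is the intended one. Assume for contradiction $G[b]\lneq G$. Since $G[b]\nor G$ is $A$-stable and contains $Z$ (indeed $\zent G$), induction gives us, for the unique $A$-invariant block $\beta$ of $G[b]$ lying over $b$ and covered by $B$, an $A$-invariant $\chi\in\Irr(\beta\mid\nu)$ in part (a), resp.\ $\phi\in\IBr(\beta)$ in part (b). By Theorem~\ref{Dade}(b), $B$ is the \emph{only} block of $G$ covering $\beta$, and hence every irreducible constituent of $\chi^G$ (resp.\ $\phi^G$) lying over $\chi$ (resp.\ $\phi$) lies in $B$. Since $\chi^G$ and $\phi^G$ are $A$-invariant and $A$ has order coprime to $|G|$, Glauberman's Lemma \cite[Lemma~(13.8)]{Isa} (in its Brauer-character form for (b), cf.\ Theorem~\ref{thm_induced_A_fixed_const_IBr}) furnishes an $A$-invariant irreducible constituent, which then lies in $B$ and, in case (a), over $\nu$ since $\chi$ does. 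This produces the desired $A$-invariant character in $\Irr(B\mid\nu)$ (resp.\ $\IBr(B)$), contradicting nothing but allowing us to henceforth assume $G[b]=G$.

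The main obstacle I anticipate is pinning down precisely which block $\beta$ of $G[b]$ to use and verifying it is $A$-invariant: one needs that among the (possibly several) blocks of $G[b]$ covering $b$, exactly one is covered by $B$, that this one is forced to be $A$-stable by the uniqueness in Theorem~\ref{Dade}(b) together with $A$-invariance of $b$ and of $B$, and that it still lies over $\nu$-compatible data so that the inductive hypothesis of Theorem~\ref{thmA}(a) genuinely applies (here one uses $Z\sbs G[b]$). The second delicate point is the irreducibility/constituent bookkeeping after induction: ensuring the $A$-invariant constituent extracted via Glauberman's Lemma actually lies in $B$ and, for part (a), over $\nu$. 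Both points are routine given Theorem~\ref{Dade} and the coprime-action tools already assembled, so the lemma follows.
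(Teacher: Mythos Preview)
Your ``cleaner route'' is essentially the paper's own argument: pick the unique $B'\in\Bl(G[b]\mid b)$ covered by $B$ (whose uniqueness and $A$-invariance come from Theorem~\ref{Dade}(b) together with the $A$-invariance of $b$ and $B$), apply the inductive hypothesis inside $G[b]$ to get an $A$-invariant $\chi_0\in\Irr(B'\mid\nu)$ (resp.\ $\phi_0\in\IBr(B')$), and then use the existence of an $A$-invariant character above $\chi_0$ (resp.\ $\phi_0$), which automatically lies in $B$ since $\Bl(G\mid B')=\{B\}$.

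Two minor points. First, the tool you invoke at the end is not Glauberman's Lemma \cite[Lemma~(13.8)]{Isa} (which is a statement about coprime actions on sets with a transitive $G$-action), but rather Theorem~\ref{thm_induced_A_fixed_const} (i.e.\ \cite[(13.28), (13.30)]{Isa}) for ordinary characters and Theorem~\ref{thm_induced_A_fixed_const_IBr} for Brauer characters; the paper cites these directly. Second, your first paragraph's attempt to induce $\chi_0^G$ and discuss irreducibility is a dead end (there is no reason for $\chi_0^G$ to be irreducible here), and you rightly abandon it; drop that detour entirely.
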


\begin{proof}
By Theorem \ref{Dade}(b) there exists a unique $B'\in\Bl(G[b]| b)$
with $(B')^G=B$.  Since $G[b]$ is uniquely determined by $b$, we have that
 $G[b]$ is $A$-stable and by uniqueness that $B'$ is $A$-invariant. Note that
$\bZ(G)\sbs G[b]$ by Theorem \ref{Dade}(a).
If $G[b]\neq G$ then we can conclude by induction that there exist some
$A$-fixed $\chi_0\in\Irr(G[b]| \nu)\cap \Irr(B')$ and $\phi_0\in\IBr(B')$.
By Theorem~\ref{thm_induced_A_fixed_const} and
\ref{thm_induced_A_fixed_const_IBr} there exist some
$\chi\in\Irr(G| \chi_0)$ and
$\phi\in\IBr(G| \phi_0)$, respectively that is $A$-fixed. Those characters
belong to $\Bl(G| B')=\Lset{B}$.
\end{proof}

\begin{lem}   \label{prop3_6}
 We can assume that  $N=\bZ(G)$.
\end{lem}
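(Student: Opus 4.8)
The plan is to show that if $\bZ(G)\subsetneq N$ then the conclusions of Theorem~\ref{thmA} already follow from the inductive hypothesis, so that afterwards we may add the assumption $N=\bZ(G)$. By the preceding reductions we may assume $G_b=G$ and $G[b]=G$, whence Theorem~\ref{Dade}(a) gives $G=N\cent GD$ for every defect group $D$ of $b$; fix such a $D$ that is $A$-invariant. First I produce a $G$-invariant, $A$-invariant character in $b$: by \cite[(9.12)]{Navarro} there is a unique $\theta_1\in\Irr(b)$ with $D\sbs\ker\theta_1$, and likewise a unique $\phi_1\in\IBr(b)$ with $D\sbs\ker\phi_1$. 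Since $b$ and $D$ are $A$-stable, uniqueness forces $\theta_1$ and $\phi_1$ to be $A$-invariant; and for $g=nc$ with $n\in N$, $c\in\cent GD$ one has $\theta_1^g=\theta_1^c=\theta_1$ (as $\theta_1^c\in\Irr(b)$ still has $D=D^c$ in its kernel), so $\theta_1$ and $\phi_1$ are $G$-invariant. Note $Z\sbs O_p(G)\sbs D\sbs\ker\theta_1$, so both characters lie over~$1_Z$.

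For part~(b), and for part~(a) when $\nu=1_Z$, I would pass to a character-triple-isomorphic situation in which the normal subgroup becomes central. Applying Proposition~\ref{prop4_1} to $(G,N,\theta_1)$ (resp. to the modular triple $(G,N,\phi_1)$) yields an $A$-equivariant isomorphism $(\iota,\sigma)\colon(G,N,\theta_1)\to(G^*,N^*,\theta^*)$ with $N^*\sbs\bZ(G^*)$ (a $p'$-group in the modular case). Since $G[b]=G$, Proposition~\ref{prop:Aisom_char_trip} and Corollary~\ref{cor:isom_char_trip}, applied with $H=G[b]=G$ and hence $H^*=G^*$, show that $\sigma_G$ carries the blocks of $G$ covering $b$ bijectively onto the blocks of $G^*$ covering $b^*:=\bl(\theta^*)$, respecting $A$-invariance, and matches $\Irr(B|\theta_1)$ with $\Irr(B^*|\theta^*)$, and similarly for Brauer characters in the modular setting, where $B^*$ is the block of $G^*$ corresponding to $B$; since $B$ is $A$-invariant, so is $B^*$. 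Now $G^*/N^*\cong G/N$ is a chief factor of $GA$ while $N^*\sbs\bZ(G^*)$, so either $N^*=\bZ(G^*)$, which is the desired reduction, or $G/N$ is elementary abelian, a case that can be settled by elementary means. Hence it suffices to treat $(G^*,B^*)$, where $N^*=\bZ(G^*)$.

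It remains to handle part~(a) for a nontrivial $\nu\in\Irr_A(Z)$, where $\theta_1$ lies over $1_Z$ and so cannot be used directly; here I would first reduce to the case $\nu=1_Z$. Since $\bZ(G)\sbs\bZ(N)$ and $N\subsetneq G$ force $|N/\bZ(N)|<|G/\bZ(G)|$, the inductive hypothesis applies to $N$ and supplies an $A$-invariant $\chi_1\in\Irr(b|\nu)$; invoking the $\nu$-correspondence machinery of \cite{Nav04} — the tool already used in Lemma~\ref{lem_gab} and Theorem~\ref{lem2_1_central_def_extending}, which is compatible with the coprime $A$-action since $\nu$ is $A$-invariant — one relates $\chi_1$ to a $G$- and $A$-invariant character of $b$ lying over $\nu$, compatibly with blocks, and then repeats the argument of the previous paragraph with this character in place of $\theta_1$. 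I expect the crux to be exactly this bookkeeping: ensuring that the character manufactured after the character-triple replacement lands in the block $B^*$ that corresponds to $B$, and not merely in some block covering $b^*$ — which is precisely what Proposition~\ref{prop:Aisom_char_trip} together with Theorem~\ref{Dade}(b) guarantees once $G[b]=G$ — while simultaneously keeping the prescribed central character $\nu$ under control through all these correspondences.
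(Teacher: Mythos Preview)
Your approach diverges from the paper's at the outset, and the first step already has a gap. You invoke \cite[(9.12)]{Navarro} to produce the canonical character $\theta_1\in\Irr(b)$ with $D\sbs\ker\theta_1$, but that result requires the defect group $D$ to be \emph{normal} in $N$. From $G[b]=G$ and Theorem~\ref{Dade}(a) you only obtain $G=N\cent GD$, which says nothing about $D\nor N$; in general a defect group of $b\in\Bl(N)$ is not normal in $N$. (Contrast this with Theorem~\ref{lem2_1_central_def_extending}, whose stronger hypothesis $D\cent GD=G$ does force $N=D\cent ND$ and hence $D\nor N$; that hypothesis is absent here.) So your $\theta_1$ and $\phi_1$ may fail to exist, and the argument of your first two paragraphs collapses.

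The paper's proof is both simpler and avoids this difficulty, because it handles the prescribed central character $\nu$ from the start. Since $\bZ(G)\sbs N\subsetneq G$ one has $|N:\bZ(N)|<|G:\bZ(G)|$, so the inductive hypothesis applied to $N$ directly furnishes an $A$-invariant $\theta\in\Irr(b|\nu)$ in part~(a), respectively $\theta\in\IBr(b)$ in part~(b). One then applies Propositions~\ref{prop4_1} and~\ref{prop:Aisom_char_trip} with this $\theta$ (using $G[b]=G$) to pass to $(G^*,N^*,\theta^*)$ and an $A$-invariant block $C$ of $G^*$ with $\sigma_G(\Irr(B|\theta))=\Irr(C|\theta^*)$. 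If $N\neq\bZ(G)$ then $|G^*:\bZ(G^*)|\le|G:N|<|G:\bZ(G)|$, and a \emph{second} application of the inductive hypothesis, now to $G^*$, yields an $A$-invariant element of $\Irr(C|\theta^*)$, which $\sigma_G^{-1}$ carries into $\Irr(B|\nu)$. This disposes of the case $N\neq\bZ(G)$ in one stroke, with no case split on $\nu$ and no appeal to the correspondence of \cite{Nav04} from your third paragraph; that appeal is in any case problematic, since manufacturing a $G$-invariant character over a chosen $\mu\in\Irr(D)$ extending $\nu$ again presupposes control over $D$ as a normal subgroup that you do not have.
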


\begin{proof}
By induction on $|G:\bZ(G)|$ we see that $b$ contains
an $A$-invariant character $\theta\in\Irr(b|\nu)$,
respectively $\theta\in\IBr(b)$. By the above we can
assume $G[b]=G$.

Let $(G^*,N^*,\theta^*)$ be the character triple associated to $(G,N,\theta)$
and $\sigma_G$ the $A$-equivariant bijection
$\Irr(G|\theta)\rightarrow \Irr(G^*|\theta^*)$ from
Proposition~\ref{prop4_1}(a), respectively the $A$-equivariant bijection
$\IBr(G|\theta)\rightarrow \IBr(G^*|\theta^*)$  from
Proposition~\ref{prop4_1}(b). According to
Proposition~\ref{prop:Aisom_char_trip}(b) there is some block $C\in\Bl(G^*)$
such that $\sigma_G(\Irr(B| \theta))= \Irr(C| \theta^*)$ or
$\sigma_G(\IBr(B|\theta))= \IBr(C|\theta^*)$, respectively.

If $N\neq \bZ(G)$ then $\Irr(C| \theta^*)$ and $\IBr(C|\theta^*)$ both
contain $A$-invariant characters because $|G^*:\bZ(G^*)|= |G:N|< |G:\bZ(G)|$.
Since $\sigma_G$ is $A$-equivariant this proves the statement.
\end{proof}

\begin{proof}[Proof of Theorem~\ref{thmA}]
Now it remains to consider the case where $\bZ(G)=N$. Since $N$ was chosen
such that $G/N$ is a chief factor of $GA$, the quotient $G/N$ is the direct
product of isomorphic simple groups that are transitively permuted by $A$.

If $G/N$ is non-abelian, Corollary~\ref{cor_dir_prod} applies and proves the
statement. Otherwise $G/N$ is an elementary abelian $p$-group or a $p'$-group.
In the first case $\Bl(G|b)$ is a singleton and the statement follows from
Theorems~\ref{thm_induced_A_fixed_const}
and~\ref{thm_induced_A_fixed_const_IBr}. In the latter case $B$ has a central
defect group and the sets $\Irr(B|\nu)$ and $\IBr(B)$ are singletons.
\end{proof}



\begin{thebibliography}{13}

\bibitem{Dade_blockextensions}
{\sc E.C.~Dade}. Block extensions. \emph{Illinois J. Math. \bf 17} (1973),
  198--272.

\bibitem{GLS3}
{\sc D.~Gorenstein, R.~Lyons, and R.~Solomon}. \emph{The classification of
  the finite simple groups. {N}umber 3. {P}art {I}. {C}hapter {A}}.
  American Mathematical Society, Providence, 1998.

\bibitem{Harris}
{\sc M. E. Harris}. Clifford theory of a finite group that contains a
  defect 0 $p$-block of a normal subgroup. \emph{Comm. Algebra \bf41} (2013),
  3509--3540.

\bibitem{Hum}
{\sc J.E. Humphreys}. Defect groups for finite groups of Lie type.
  \emph{Math. Z. \bf119} (1971), 149--152.

\bibitem{Isa_sympl}
{\sc I.~M. Isaacs}. Characters of solvable and symplectic groups.
  \emph{Amer. J. Math. \bf95} (1973), 594--635.

\bibitem{Isa}
{\sc I.~M. Isaacs}. \emph{Character Theory of Finite Groups}. Academic Press,
  New York, 1976.

\bibitem{IMN}
{\sc I.~M. Isaacs, G.~Malle and G.~Navarro}. A reduction theorem for the McKay
  conjecture. \emph{Invent. Math. \bf 170}(1) (2007), 33--101.

\bibitem{KoshitaniSpaeth1}
{\sc S.~Koshitani and B.~Sp\"ath}. Clifford theory of characters in induced
  blocks. To appear in \emph{Proc. Amer. Math. Soc}, 2013.

\bibitem{MN11}
{\sc G. Malle and G. Navarro}. Extending characters from Hall subgroups.
  \emph{Doc. Math. \bf16} (2011), 901--919.

\bibitem{MT}
{\sc G. Malle and D. Testerman}. \emph{Linear Algebraic Groups and Finite
  Groups of Lie Type}. Cambridge Studies in Advanced Mathematics, 133.
  Cambridge University Press, Cambridge, 2011.

\bibitem{Murai_Dade}
{\sc M.~Murai}. On blocks of normal subgroups of finite groups. \emph{Osaka
  J. Math. \bf 50} (2013), 1007--1020.

\bibitem{Nav94}
{\sc G. Navarro}. Some open problems on coprime action and character
  correspondences. \emph{Bull. London Math. Soc. \bf 26} (1994), 513--522.

\bibitem{Navarro}
{\sc G.~Navarro}. \emph{Characters and Blocks of Finite Groups}. London
  Math. Soc. Lecture Note Series. Cambridge University Press,
  Cambridge, 1998.

\bibitem{Nav04}
{\sc G.~Navarro}. Actions and characters in blocks. \emph{J. Algebra \bf 275}
  (2004), 471--480.

\bibitem{NavarroSpaeth1}
{\sc G.~Navarro and B.~Sp\"ath}. On Brauer's height zero conjecture.
  \emph{J. Eur. Math. Soc. \bf 16} (2014), 695--747.

\bibitem{NavarroTiep}
{\sc G.~Navarro and P. H. Tiep}.  Brauer's height zero conjecture for the
  2-blocks of maximal defect. \emph{J. Reine Angew. Math. \bf 669} (2012),
  225--247.

\end{thebibliography}
\end{document}